\documentclass[12pt, twoside, 
]{amsart} 

\title[Versions of injectivity and extension theorems]
{Versions of injectivity and extension theorems}

\author{Yoshinori  Gongyo}

\address{Graduate School of Mathematical Sciences, 
the University of Tokyo, 3-8-1 Komaba, Meguro-ku, Tokyo 153-8914, Japan.}

\email{gongyo@ms.u-tokyo.ac.jp}

\address{Department of Mathematics, Imperial College London, 180 Queen's Gate, London SW7 2AZ, UK.}
 \email{y.gongyo@imperial.ac.uk}

\author{Shin-ichi Matsumura}
\address{Kagoshima University, 1-21-35 Koorimoto, Kagoshima 890-0065, Japan.}
\email{shinichi@sci.kagoshima-u.ac.jp, mshinichi0@gmail.com}

\date{\today, version 1.06}

\subjclass[2010]{14E30, 14F18, 32L10, 32L20.}

\keywords{injectivity theorem, extension theorem, abundance conjecture}

\newcommand{\Supp}[0]{{\operatorname{Supp}}}

\newcommand{\ddbar}{dd^c}

\newcommand{\dbar}{\overline{\partial}}
\newcommand{\e}{\varepsilon}
\newcommand{\ome}{\widetilde{\omega}}
\newcommand{\I}[1]{\mathcal{I}(#1)}

\newcommand{\lla}[0]{{\langle\!\langle}}
\newcommand{\rra}[0]{{\rangle\!\rangle}}

\setlength{\oddsidemargin }{-1pt}
\setlength{\evensidemargin }{-1pt}
\setlength{\textwidth}{460pt}

\setlength{\topmargin}{-0.5cm}
\addtolength{\textheight}{0.5cm}

\usepackage{pxfonts}

\usepackage{latexsym}
\usepackage{amsmath}
\usepackage{amssymb}
\usepackage{amsthm}
\usepackage{amscd}
\usepackage{enumerate} 
\usepackage{amssymb} 
\usepackage{mathrsfs}          
\usepackage[all]{xy}
\usepackage{color}

\newtheorem{thm}{Theorem}[section]

\newtheorem{prop}[thm]{Proposition}
\newtheorem{lem}[thm]{Lemma}
\newtheorem{rem}[thm]{Remark}
\newtheorem{cor}[thm]{Corollary}
\newtheorem{conj}[thm]{Conjecture}

\newtheorem{cl}[thm]{Claim}

 \theoremstyle{definition}
\newtheorem{defi}[thm]{Definition}
\newtheorem{eg}[thm]{Example}

\newtheorem*{ack}{Acknowledgments}

\begin{document}
\bibliographystyle{amsalpha+}
 
\maketitle
 
\begin{abstract}
We give an analytic version of the injectivity theorem by using multiplier ideal sheaves, 
and prove some extension theorems for the adjoint bundle of dlt pairs. 
Moreover, by combining techniques of the minimal model program,   
we obtain 
some results for semi-ampleness related to 
the abundance conjecture in birational geometry 
and the Strominger-Yau-Zaslow conjecture 
for hyperK\"ahler manifolds.

\end{abstract}

\section{Introduction}\label{real-intro}
Throughout this paper, we work over $\mathbb{C}$, the complex number field, 
and  freely use the standard notation in 
\cite{bchm}, \cite{Dem}, \cite{kamama}, and \cite{komo}. 
Further we interchangeably use the words \lq \lq Cartier divisors", \lq \lq line bundles", and 
\lq \lq invertible sheaves".

The following conjecture is one of 
the most important conjectures in the classification theory of algebraic varieties:

\begin{conj}[Generalized abundance conjecture]\label{abun conj}Let $X$ be a normal projective variety and $\Delta$ be an effective $\mathbb{Q}$-divisor such that $(X,\Delta)$ is a klt pair. 
Then  
$\kappa(X, K_X+\Delta)=\kappa_{\sigma}(X, K_X+\Delta)$.   In particular, if $K_X+\Delta$ is nef, then it is semi-ample. 
$($For the definition of $\kappa(\cdot)$ and $\kappa_{\sigma}(\cdot)$, 
see  \cite{nakayama-zariski-abun}.$)$
\end{conj}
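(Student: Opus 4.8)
The statement is the generalized abundance conjecture, which is one of the central open problems in the classification theory, so what I can offer is a strategy — a reduction to a single genuinely hard step — rather than a complete proof. The plan is: (i) reduce to the case where $K_X+\Delta$ is nef by running a minimal model program; (ii) for nef $K_X+\Delta$, stratify according to the numerical dimension and dispose of the extreme values; and (iii) isolate the intermediate case, where the essential missing ingredient is a non-vanishing statement, and explain how an injectivity/extension theorem with multiplier ideal sheaves — the analytic machinery advertised in the abstract — is meant to intervene.

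\emph{Reduction to the nef case.} First I would note that one inequality is unconditional: $\kappa(X,K_X+\Delta)\le\kappa_\sigma(X,K_X+\Delta)$ always holds, so the content is the reverse inequality. Since $(X,\Delta)$ is klt, by \cite{bchm} one may run a $(K_X+\Delta)$-MMP with scaling. If $K_X+\Delta$ is not pseudoeffective, the program ends with a Mori fiber space and $\kappa=\kappa_\sigma=-\infty$, so there is nothing to prove. If $K_X+\Delta$ is pseudoeffective, the program terminates with a minimal model $(X',\Delta')$ on which $K_{X'}+\Delta'$ is nef; since both $\kappa$ and $\kappa_\sigma$ are preserved along the steps of the program (they are invariants of the birational class of the pair, the latter needing the standard check that $\kappa_\sigma$ does not drop under divisorial contractions and flips of the relevant divisor), it suffices to treat the case where $K_X+\Delta$ itself is nef. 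In that case the asserted equality $\kappa=\kappa_\sigma$ is precisely log abundance for nef klt pairs, and the ``in particular'' clause then follows from it together with the base-point-free theorem.

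\emph{Stratification by the numerical dimension.} Assume now $D:=K_X+\Delta$ is nef and set $\nu:=\kappa_\sigma(X,D)$. If $\nu=\dim X$, then $D$ is nef and big, hence semi-ample and $\kappa(X,D)=\dim X$ by base-point-freeness, so this case is fine. If $\nu=0$, then $D$ has numerical dimension zero and one shows $\kappa(X,D)=0$ using the structure of such pseudoeffective divisors (their divisorial Zariski decomposition has trivial positive part), so the invariant part of $D$ is effective up to numerical equivalence, and a standard boundedness argument upgrades this to $\kappa=0$. The remaining range $0<\nu<\dim X$ is the difficult one: here I would try to construct an Iitaka-type fibration $f\colon X\dashrightarrow Z$ with $\dim Z=\nu$ such that $D$ restricts to a numerically trivial divisor on the general fiber, conclude abundance in lower dimension along the fibers, and descend via the canonical bundle formula on $Z$ — but the existence of such an $f$ is exactly what is not available unconditionally.

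\emph{The main obstacle: non-vanishing.} The genuinely open input is non-vanishing — producing a single nonzero section of $m(K_X+\Delta)$ for some $m>0$ out of the mere numerical positivity $\kappa_\sigma>0$. This is where the analytic methods of the paper are intended to enter: equip $K_X+\Delta$ with its nef (hence pseudoeffective) singular Hermitian metric, pass to a suitable log resolution on which the curvature current splits into an effective divisorial part plus a nef part, and apply an injectivity theorem with multiplier ideal sheaves to lift a section from the locus where positivity concentrates. So in summary, step (i) is routine MMP bookkeeping, step (ii) follows known lines once non-vanishing is granted, and step (iii) — non-vanishing in the non-big, non-torsion regime — is the true obstacle for which no unconditional argument is presently known; in practice one therefore expects the conjecture to be established here only in special situations (low dimension, or extra positivity or fibration structure on $X$), with the injectivity and extension theorems of this paper supplying the tool for those cases.
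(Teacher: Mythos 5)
The statement you were asked about is Conjecture \ref{abun conj}, and the paper neither proves it nor claims to: it is stated as an open conjecture and enters the paper only as a \emph{hypothesis} (in dimension $n-1$) for Theorem \ref{main-thm-semiample}, together with additional analytic assumptions (a metric with semi-positive curvature and identically vanishing Lelong number on a resolution). Your candid admission that you can only offer a reduction strategy is therefore the correct posture, and your identification of non-vanishing as the essential obstruction matches the paper's own framing in the introduction, where the authors say explicitly that one needs the non-vanishing conjecture and the extension conjecture as inputs. The paper's actual contributions are the injectivity theorem (Theorem \ref{main-inj}), the extension theorems (Theorem \ref{ext}, Corollaries \ref{ext-cor2} and \ref{ext-cor}), and the conditional semi-ampleness results (Theorem \ref{main-thm-semiample}, Corollaries \ref{sm semi positive} and \ref{cor-hk}, the last using Verbitsky's non-vanishing on hyperK\"ahler fourfolds to supply the missing section).

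One substantive correction to your sketch: step (i) is not ``routine MMP bookkeeping.'' For a klt pair $(X,\Delta)$ with $K_X+\Delta$ pseudoeffective but $\Delta$ not big, \cite{bchm} does not give termination of the $(K_X+\Delta)$-MMP with scaling; the existence of (good) minimal models in this generality is itself open and is essentially equivalent to the combination of non-vanishing and abundance (this is why the paper invokes \cite{gl} and \cite{dhp} to convert the dimension-$(n-1)$ abundance hypothesis into existence of good minimal models in dimension $n-1$, rather than citing \cite{bchm} directly). So your reduction to the nef case already hides a second open problem, in addition to the non-vanishing obstruction you correctly isolate in step (iii). The $\nu=0$ case you dispose of is indeed known (Nakayama), and the inequality $\kappa\le\kappa_\sigma$ is unconditional, so those parts are fine.
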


Toward the abundance conjecture, 
we need to solve the non-vanishing conjecture  
and the extension conjecture, see \cite{dhp}, \cite[Introduction]{fujino-abundance}, \cite[Section5]{fg3}.  
One of the purposes of this paper is to study 
the following extension conjecture for the adjoint bundle of dlt pairs 
formulated in \cite[Conjecture 1.3]{dhp}:

\begin{conj}[{Extension conjecture for dlt pairs}]\label{c-dlt} 
Let $X$ be a normal projective variety 
and $S + B$ be an effective $\mathbb Q$-divisor
with the following assumptions: 
\begin{itemize}
\item $(X, S+B)$ is a dlt pair.
\item $\lfloor S+B \rfloor =S$.  
\item $K_X+S+B$ is nef. 
\item $K_X+S+B$ is $\mathbb Q$-linearly equivalent to 
an effective divisor $D$ with $S\subseteq \Supp D \subseteq \Supp\,(S+B)$. 
\end{itemize}
Then the restriction map 
$$H^0(X,\mathcal O _X(m(K_X+S+B)))\to H^0(S,  \mathcal O _S(m(K_X+S+B)))  $$
is surjective for all sufficiently divisible integers $m\geq 2$. 
\end{conj}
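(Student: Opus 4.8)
\medskip
\noindent\textbf{Proof proposal.}
The plan is to deduce the surjectivity of the restriction map from an injectivity statement for first cohomology --- by the classical mechanism of Tankeev, Koll\'ar, Enoki, Ambro and Fujino --- feeding in the analytic injectivity theorem with multiplier ideal sheaves that is the main tool of this paper. The reason one needs the analytic version is that $K_X+S+B$ is only assumed nef rather than semi-ample, so the algebraic injectivity theorems do not apply directly; instead one approximates the nef class by smooth metrics of curvature $\ge-\e\omega$ and lets $\e\to 0$.

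\smallskip
\noindent\emph{Reduction to cohomology.}
Put $N:=K_X+S+B$. After, if necessary, replacing $X$ by a small $\mathbb Q$-factorial dlt modification --- and recording the adjunction formula $(K_X+S+B)|_S=K_S+B_S$, where $B_S:=\operatorname{Diff}_S(B)$ and $(S,B_S)$ is again dlt --- the $\mathbb Q$-linear equivalence $K_X+S+B\sim_{\mathbb Q}D$ becomes a genuine linear equivalence $mN\sim mD=:E$ for every sufficiently divisible $m$, with $E$ effective and, since $S\subseteq\Supp D$, with $\mult_SE\ge1$, i.e.\ $S\le E$, once $m$ is large. From the exact sequence
\[
0\longrightarrow\mathcal O_X(mN-S)\longrightarrow\mathcal O_X(mN)\longrightarrow\mathcal O_S\big(m(K_S+B_S)\big)\longrightarrow0
\]
the desired surjectivity is equivalent to the injectivity of the multiplication map $\cdot\,s_S\colon H^1(X,\mathcal O_X(mN-S))\to H^1(X,\mathcal O_X(mN))$ by a defining section $s_S$ of $\mathcal O_X(S)$.

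\smallskip
\noindent\emph{Applying the analytic injectivity theorem.}
Since $\lfloor S+B\rfloor=S$, the pair $(X,B)$ is klt, so $\mathcal O_X(mN-S)=K_X\otimes F$ with $F:=\mathcal O_X((m-1)N+B)$ a line bundle ($m$ being chosen to clear the denominators of $B$, using $(m-1)B+B=mB$) that carries a singular metric $h_\e$ with $\mathcal I(h_\e)=\mathcal O_X$ and $\sqrt{-1}\,\Theta_{h_\e}(F)\ge-\e\omega$, obtained from smooth metrics of the nef class $(m-1)N$ and the canonical metric of the effective divisor $B$. The class of $F^{\otimes\ell}$ equals $\ell\big((m-1)N+B\big)\sim_{\mathbb Q}\ell\big((m-1)D+B\big)$, which for $\ell,m$ large is linearly equivalent to an effective divisor $\ge S$ (again using $S\subseteq\Supp D$); pick $s\in H^0(X,F^{\otimes\ell})$ cutting out such a divisor, so that $\cdot\,s$ factors as $(\cdot\,s_{(\operatorname{div}s)-S})\circ(\cdot\,s_S)$ and $\sup_X|s|_{h_\e^{\ell}}<\infty$. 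The analytic injectivity theorem then gives the injectivity of $\cdot\,s$ on the appropriate multiplier-ideal-refined first cohomology, and, after letting $\e\to0$, the injectivity of $\cdot\,s_S$ needed above; the hypothesis $m\ge 2$ guarantees that $(m-1)N$ is a nef class carrying such approximating metrics.

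\smallskip
\noindent\emph{The main obstacle.}
The step I expect to be genuinely difficult --- and the reason Conjecture~\ref{c-dlt} is at present known only in special cases --- is the passage from the multiplier-ideal-refined injectivity supplied by the analytic theorem back to the plain injectivity of $\cdot\,s_S$ on $H^1(X,\mathcal O_X(mN-S))$. Even though $(X,B)$ is klt, the cohomology groups occurring on the target side involve the amplified boundary $(\ell+1)B$, whose multiplier ideal $\mathcal I(X,(\ell+1)B)$ is in general a proper ideal; the map it induces on $H^1$ into the untwisted group need not be injective, and the failure is concentrated along $S$, which is exactly where the different $(S,B_S)$ being only dlt --- rather than plt or klt --- makes itself felt. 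The expected remedy, in line with the techniques announced in the introduction, is to combine the analytic injectivity theorem with the minimal model program: run a suitable MMP on $(X,S+B)$ (or on a twisted pair), use the hypothesis $S\subseteq\Supp D\subseteq\Supp(S+B)$ to keep the support of $D$, and hence the chosen metrics and the asymptotic multiplier ideals, under control throughout, and conclude that the restricted linear system is saturated with respect to the different in the sense of the theory of pl-flips. Making this reduction work unconditionally is the crux; the purely analytic ingredients, including the limit $\e\to0$, are precisely what the injectivity theorem of this paper is built to provide.
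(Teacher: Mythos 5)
The statement you are trying to prove is stated in the paper as an open \emph{conjecture} (\cite[Conjecture 1.3]{dhp}); the paper does not prove it, and neither does your proposal. What the paper actually establishes is Theorem \ref{ext} together with Corollaries \ref{ext-cor2} and \ref{ext-cor}: surjectivity of the restriction map under the \emph{additional} hypothesis that $K_X+S+B$ carries a singular metric $h$ with genuinely semi-positive curvature current (and, to get all of $H^0(S,\mathcal O_S(m(K_X+S+B)))$ rather than only the image of the multiplier-ideal-refined group, with Lelong numbers vanishing along $S$). Your reduction of surjectivity to the injectivity of $\cdot\,s_S$ on $H^1$, and the identification $mN-S=K_X+(m-1)N+B$ with $L=\mathcal O_X(aD)$, $h_L=h^a$, $h_F=h^{m-1}h_B$ so that $h\le h_D$ makes $\sup_X|s_{aD}|_{h_L}<\infty$, is exactly the mechanism of the paper's proof of Theorem \ref{ext}. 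So the outline of the conditional argument is right.

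The genuine gap is your opening claim that one can replace the semi-positive metric by ``smooth metrics of curvature $\ge-\e\omega$'' and let $\e\to0$. Theorem \ref{main-inj} requires actual singular metrics $h_F$, $h_L$ with $\sqrt{-1}\Theta\ge 0$ and the multiplicative relation $h_F=h_L^a h_\Delta$; its proof then produces the family $\{h_\e\}$ by the Demailly--Peternell--Schneider \emph{equisingular approximation of a given semi-positive $h_F$}, and the whole argument leans on the monotonicity $h_{\e_2}\le h_{\e_1}\le h_F$ and $\I{h_\e}=\I{h_F}$ to get the uniform bounds $\|u_\e\|_{h_\e,\ome}\le\|u\|_{h_F,\omega}$ and the uniformly bounded solutions $\gamma_\e$. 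If you start only from nefness you have a family of metrics with curvature $\ge-\e\omega$ but no common dominating semi-positive metric, no fixed multiplier ideal, and no uniform $L^2$ control, so every step of the limiting argument collapses; and a nef line bundle need not admit any semi-positive singular metric at all, so the hypothesis cannot be recovered. This is precisely why assumption (3) appears in Theorem \ref{ext} and why the conjecture remains open. Your ``main obstacle'' paragraph correctly flags the second difficulty (passing from $H^0(S,\cdot\otimes\I{h^{m-1}h_B})$ back to $H^0(S,\cdot)$, i.e.\ from $\I{\varphi}|_S$ to $\mathcal O_S\otimes\I{\varphi}$); in the paper this is resolved not by an MMP saturation argument but by the Lelong-number hypothesis via Skoda's lemma and H\"older's inequality (proof of Corollary \ref{ext-cor2}), which again is an added assumption, not a consequence of nefness.
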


When $S$ is a normal irreducible variety (namely (X, S+B) is a plt pair), 
Demailly--Hacon--P\u{a}un proved the above conjecture in \cite{dhp}
by using technical methods based on a version of the Ohsawa-Takegoshi extension theorem.
This result can be seen as an extension theorem 
for plt pairs.

In this paper,  
we study the extension conjecture for {\textit{dlt pairs}}
by giving an analytic version of the injectivity theorem 
instead of the Ohsawa-Takegoshi extension theorem. 
Thanks to the injectivity theorem,  
we can obtain some extension theorems for not only plt paris but also dlt pairs. 
This is one of our advantages. 
Our injectivity theorem is as follows: 

\begin{thm}[A version of the injectivity theorem {=Theorem \ref{main-inj}}]
\label{intro-main-inj}
Let $(F, h_{F})$ and $(L, h_{L})$ 
be $($singular$)$ hermitian line bundles 
with semi-positive curvature 
on a compact K\"ahler manifold $X$. 
Assume that there exists 
an effective $\mathbb{R}$-divisor $\Delta$ with 
\begin{align*}
h_{F}=h_{L}^{a}\cdot h_{\Delta},  
\end{align*}
where $a$ is a positive real number and  $h_{\Delta}$  is 
the singular metric defined by $\Delta$.

Then for a $($non-zero$)$ section $s$ of $L$
satisfying $\sup_{X} |s|_{h_{L}} < \infty$, 
the multiplication map 
\begin{equation*}
\Phi_{s}: H^{q}(X, K_{X} \otimes F \otimes \I{h_{F}}) 
\xrightarrow{\otimes s} 
H^{q}(X, K_{X} \otimes F\otimes L \otimes \I{h_{F} h_{L}})
\end{equation*}
is $($well-defined and$)$ injective for any $q$. 
Here $\I{h}$ denotes the multiplier ideal sheaf 
associated to a singular metric $h$.   
\end{thm}

The injectivity theorem has been 
studied by several authors, for example, Tankeev \cite{tan}, Koll\'ar \cite{Kol86}, Enoki \cite{Eno90}, Ohsawa \cite{ohsawa}, Enault--Viehweg \cite{ev}, Fujino \cite{fujino-book1}, \cite{Fuj12-A}, \cite{F-fund}, \cite{fujino-book2}, and Ambro \cite{ambro-qlog}, \cite{ambro}. 
Recently  the second author gave an injectivity theorem with multiplier ideal sheaves, 
which corresponds to the case of  $\Delta=0$ (see \cite{Mat-inj}).
Our proof is a generalization of  the methods of \cite{Eno90}, \cite{ohsawa},  
\cite{Fuj12-A}, and  \cite{Mat-inj}.

By applying 
this injectivity theorem to an extension problem, we obtain the following extension theorem. 
Even if $K_{X}+\Delta$ is semi-positive 
(namely admits a smooth metric with semi-positive curvature), 
it seems to be not able to  prove Conjecture \ref{c-dlt} for dlt pairs 
by the Ohsawa-Takegoshi extension theorem in at least current situation, 
and thus we need our injectivity theorem  (Theorem \ref{intro-main-inj}).

\begin{thm}[{=Theorem \ref{ext}}]\label{intro-ext}
Let $X$ be a compact K\"ahler manifold and 
$S+B$ be an effective $\mathbb Q$-divisor with the 
following assumptions: 
\begin{itemize}
\item $S+B$ is a simple normal crossing divisor with 
$0 \leq S+B \leq 1$ and $\lfloor S+B \rfloor =S$.
\item $K_{X}+S+B$ is ${\mathbb{Q}}$-linearly 
equivalent to an effective divisor $D$ 
with $S \subseteq\mathrm{Supp}\, D$. 
\item $K_{X}+S+B$ admits a $($singular$)$ metric $h$ with semi-positive curvature. 
\end{itemize}
Then for an integer $m \geq 2$ with Cartier divisor $m(K_{X}+S+B)$ 
and any section $u \in H^{0}(S, \mathcal{O}_{S}(m(K_{X}+S+B)))$ 
that comes from  
$H^{0}(S, \mathcal{O}_{S}(m(K_{X}+S+B))\otimes \I{h^{m-1}h_{B}} )$,  
the section $u$ can be extended to a section in $H^{0}(X, \mathcal{O}_{X}(m(K_{X}+S+B)) )$. 
\end{thm}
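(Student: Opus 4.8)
The plan is to reduce the extension problem to the injectivity statement of Theorem~\ref{intro-main-inj} applied with a carefully chosen triple $(F,h_F)$, $(L,h_L)$, $\Delta$. Write $N:=m(K_X+S+B)$, which is Cartier by assumption. Given $u\in H^0(S,\mathcal O_S(N))$ coming from $H^0(S,\mathcal O_S(N)\otimes\mathcal I(h^{m-1}h_B))$, the first step is to build a suitable singular metric on the line bundle $L:=\mathcal O_X(S)$ whose curvature is semi-positive and whose multiplier ideal, restricted to $S$, controls $u$; concretely one takes $h_L$ to be the metric defined by the divisor $S$ itself (so $L$ has a canonical section $s_S$ with $\mathrm{div}(s_S)=S$), and one sets $F$ so that $K_X\otimes F\cong \mathcal O_X((m-1)(K_X+S+B)+ (\text{correction from }B))$, i.e.\ $F=\mathcal O_X((m-1)(K_X+S+B))\otimes\mathcal O_X(\lceil B\rceil-\ldots)$, equipped with the metric $h_F=h^{m-1}h_B$. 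The point of the $\mathbb Q$-linear equivalence $K_X+S+B\sim_{\mathbb Q}D$ with $S\subseteq\Supp D$ is precisely that it lets us write $h_F=h_L^{a}\cdot h_\Delta$ for an appropriate positive rational $a$ (coming from the multiplicity of $S$ in $D$, rescaled by $m-1$) and an effective $\mathbb R$-divisor $\Delta$ supported in $\Supp(S+B)$: this is exactly the hypothesis needed to invoke Theorem~\ref{intro-main-inj}. So step one is bookkeeping to arrange the curvature and metric identities.

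The second step is the standard cohomological extension mechanism via the injectivity theorem. One has the short exact sequence
\begin{equation*}
0\to \mathcal O_X(N-S)\otimes\mathcal I(h_F)\xrightarrow{\ \otimes s_S\ }\mathcal O_X(N)\otimes\mathcal I(h_Fh_L)\to \mathcal Q\to 0,
\end{equation*}
where $\mathcal Q$ is a sheaf supported on $S$ whose global sections contain the image of the given $u$ (here one must check, using adjunction and the fact that $u$ lifts to the $\mathcal I(h^{m-1}h_B)$-twisted space on $S$, that $u$ defines a class in $H^0(X,\mathcal Q)$). The obstruction to extending $u$ to $H^0(X,\mathcal O_X(N))$ lies in $H^1(X,\mathcal O_X(N-S)\otimes\mathcal I(h_F))$, and after identifying $N-S$ with $K_X\otimes F$ this is precisely $H^1(X,K_X\otimes F\otimes\mathcal I(h_F))$. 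Theorem~\ref{intro-main-inj} with $q=1$ tells us that the multiplication map
\begin{equation*}
\Phi_{s_S}\colon H^1(X,K_X\otimes F\otimes\mathcal I(h_F))\xrightarrow{\otimes s_S} H^1(X,K_X\otimes F\otimes L\otimes\mathcal I(h_Fh_L))
\end{equation*}
is injective; chasing the long exact sequence, injectivity of $\Phi_{s_S}$ forces the connecting homomorphism $H^0(X,\mathcal Q)\to H^1(X,K_X\otimes F\otimes\mathcal I(h_F))$ to vanish on the class of $u$, so $u$ extends. The hypothesis $\sup_X|s_S|_{h_L}<\infty$ in Theorem~\ref{intro-main-inj} is automatic here since $h_L$ is the metric attached to the divisor $S=\mathrm{div}(s_S)$, so $|s_S|_{h_L}\equiv 1$.

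The main obstacle I expect is the second half of step two, namely verifying that the given section $u$ — which a priori lives only on $S$ and only in the multiplier-ideal-twisted subspace $H^0(S,\mathcal O_S(N)\otimes\mathcal I(h^{m-1}h_B))$ — really does produce a global section of the quotient sheaf $\mathcal Q$ above, i.e.\ that the restriction $\mathcal I(h_Fh_L)|_S$ matches, via adjunction, the ideal $\mathcal I(h^{m-1}h_B)$ on $S$. This requires a restriction/subadjunction statement for multiplier ideals of the metric $h$ along $S$ (in the spirit of Ohsawa--Takegoshi-type $L^2$ restriction, or of the subadjunction formula for multiplier ideals), together with the compatibility $h_F h_L = h^{m-1}h_B h_S$ up to the chosen $\Delta$. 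Making the divisor $\Delta$ and the constant $a$ explicit so that all three conditions — semi-positivity of the curvatures of $h_F$ and $h_L$, the factorization $h_F=h_L^a h_\Delta$, and the adjunction identity on $S$ — hold simultaneously is where the real care is needed; the rest is a formal diagram chase. (If $S$ has several components, one applies this componentwise or uses that $\mathcal O_X(S)$ still carries the honest section $s_S$, which is the key advantage over Ohsawa--Takegoshi that the introduction advertises.)
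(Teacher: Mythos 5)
Your overall mechanism (short exact sequence, injectivity of a multiplication map on $H^{1}$, diagram chase) is the same as the paper's, but your Step 1 contains a genuine gap that the paper's proof is specifically engineered to avoid. You take $L=\mathcal O_X(S)$ with $h_L=h_S$ the divisorial metric, and claim that the $\mathbb Q$-linear equivalence $K_X+S+B\sim_{\mathbb Q}D$ with $S\subseteq\Supp D$ yields the factorization $h_F=h_L^{a}\cdot h_\Delta$ required by Theorem \ref{intro-main-inj}. It does not: that factorization is an identity of \emph{metrics}, and $h_F=h^{m-1}h_B$ is built from the \emph{given} semi-positive metric $h$, which is an arbitrary singular metric on $K_X+S+B$ and in general does not factor through any power of $h_S$. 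It would if $h$ were the divisorial metric $h_D$, but $\mathbb Q$-linear equivalence only identifies line bundles, not metrics, and you cannot simply replace $h$ by $h_D$ without changing the multiplier ideal $\I{h^{m-1}h_B}$ appearing in the hypothesis on $u$. Your observation that $\sup_X|s_S|_{h_L}<\infty$ is automatic is correct precisely for the choice of $h_L$ for which the other hypothesis fails.

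The paper's fix is a commutative diagram: choose $a>0$ with $aD$ Cartier and $S\le aD$ (possible since $S\subseteq\Supp D$), and note that $\otimes s_S$ followed by $\otimes s_{aD-S}$ equals $\otimes s_{aD}$, so it suffices to prove injectivity of $\otimes s_{aD}$. For that map one takes $L=\mathcal O_X(aD)$ with $h_L:=h^{a}$, so that $h_F=h^{m-1}h_B=h_L^{(m-1)/a}h_B$ holds tautologically with $\Delta=B$ effective; the remaining hypothesis $\sup_X|s_{aD}|_{h^{a}}<\infty$ is arranged at the outset by replacing the weight of $h$ with its maximum with that of $h_D$, which keeps the curvature semi-positive and only enlarges $\I{h^{m-1}h_B}$, hence preserves the hypothesis on $u$. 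Finally, the obstacle you flag as the main one is not an issue: the quotient sheaf in the exact sequence is $\mathcal O_S(m(K_X+S+B))\otimes\I{h^{m-1}h_B}$ (a tensor product, not a restriction of the multiplier ideal), and the hypothesis of the theorem is precisely that $u$ lifts to its global sections, so no Ohsawa--Takegoshi-type restriction statement is needed here; the delicate comparison of $\mathcal O_S\otimes\I{\cdot}$ with $\I{\cdot}|_S$ only enters later, in Corollary \ref{ext-cor2}.
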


In particular, we can prove Conjecture \ref{c-dlt} 
under the assumption that $K_{X}+ \Delta$ admits a (singular) metric 
whose curvature is semi-positive and Lelong number is identically zero 
(see Corollary \ref{ext-cor}). 
This assumption is stronger than the assumption that $K_{X}+\Delta$ is nef, but 
weaker than the assumption that $K_{X}+\Delta$ is semi-positive. 
Remark that Verbitsky proved the non-vanishing conjecture on hyperK\"ahler manifolds  
under the same assumption (see \cite{ver}).

As compared with Conjecture \ref{c-dlt}, one of our advantages is removing the condition  $\Supp\,D\subseteq \Supp (S+B) $ (this version of the extension conjecture  is  \cite[Conjecture 5.8]{fg3}). 
Thanks to removing the condition  $\Supp\,D\subseteq \Supp (S+B) $ in Conjecture \ref{c-dlt}, we can run the MMP preserving the good condition for metrics 
(cf. \cite[Section 8]{dhp} and \cite[Theorem 5.9]{fg3}). 
By applying the above theorem and techniques of 
the MMP, we obtain the following theorem related to 
the abundance conjecture:

\begin{thm}[{=Theorem \ref{main-thm-semiample}}]\label{main-thm}
Assume that Conjecture \ref{abun conj} holds in dimension $n-1$.
Let $X$ be an $n$-dimensional normal projective variety and $\Delta$ be an $\mathbb{Q}$-divisor 
with the following assumptions: 
\begin{itemize}
\item $(X,\Delta)$ is a klt pair. 
\item  There exists a projective birational morphism $\varphi:Y \to X$ such that $Y$ is smooth and $\varphi^*(\mathcal{O}_X(m(K_X+\Delta)))$ admits 
a $($singular$)$ metric whose curvature 
is semi-positive and Lelong number is identically zero.
Here $m$ is a positive integer such that $m(K_X+\Delta)$ is Cartier.  
\end{itemize}
If $\kappa(K_X+\Delta) \geq 0$, then $K_X+\Delta$ is semi-ample.

\end{thm}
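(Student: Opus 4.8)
The plan is to reduce the semi-ampleness of $K_X+\Delta$ to Conjecture \ref{abun conj} in dimension $n-1$ by running a suitable minimal model program and then applying the extension result in the form of Theorem \ref{intro-ext} (equivalently Corollary \ref{ext-cor}) on a log resolution. First I would pass to $Y$ via $\varphi$, writing $K_Y+\Delta_Y = \varphi^*(K_X+\Delta)+E$ with $\Delta_Y$ and $E$ effective with no common components, $(Y,\Delta_Y)$ log smooth, and with $\varphi^*\mathcal O_X(m(K_X+\Delta))$ carrying the given metric $h$ of semi-positive curvature and identically zero Lelong number; pushing the problem up to $Y$ is harmless because semi-ampleness of $K_X+\Delta$ is equivalent to that of $K_Y+\Delta_Y$ up to the fixed part $E$, and $\kappa(K_Y+\Delta_Y)=\kappa(K_X+\Delta)\geq 0$. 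Since $\kappa\geq 0$ we may write $m(K_Y+\Delta_Y)\sim_{\mathbb Q} D$ for some effective divisor $D$, and after further blowing up we may assume $D+\Delta_Y$ has simple normal crossing support.

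The second step is to run a $(K_Y+\Delta_Y)$-MMP with scaling. The key point — this is where the hypothesis that the Lelong number of $h$ vanishes is used, and where the removal of the condition $\Supp D\subseteq\Supp(S+B)$ in Theorem \ref{intro-ext} matters — is that each step of this MMP (divisorial contractions and flips) preserves the existence of a singular metric with semi-positive curvature and identically zero Lelong number on the pushforward of $m(K_\cdot+\Delta_\cdot)$: this is the metric-preserving MMP of the type in \cite[Section 8]{dhp} and \cite[Theorem 5.9]{fg3}, which works precisely because we do not need to track the support condition. Running this MMP we arrive at a birational model $(X',\Delta')$ on which $K_{X'}+\Delta'$ is nef (termination being available here because $\kappa\geq 0$, via \cite{bchm}-type arguments after reducing to the case where $K_{X'}+\Delta'$ is the relevant limit, or by abundance in lower dimension applied to the relevant fibers). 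I expect this to be the most delicate step: checking that the metric condition genuinely propagates through flips, and that the MMP terminates, requires care, but both are essentially available from the cited results once the support condition has been dropped.

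Third, on $X'$ we have a klt pair with $K_{X'}+\Delta'$ nef and $\kappa(K_{X'}+\Delta')\geq 0$, admitting (after a further log resolution $Y'\to X'$) a singular metric with semi-positive curvature and zero Lelong number on $m(K_{Y'}+\Delta_{Y'})$. Now I would invoke Corollary \ref{ext-cor}: writing $\Delta'=S'+B'$ after adjusting coefficients so that there is a component $S'$ with $\lfloor S'+B'\rfloor=S'$ and $S'\subseteq\Supp D'$ where $D'$ is an effective divisor $\mathbb Q$-linearly equivalent to $m(K_{X'}+\Delta')$ — such a component exists because $\kappa\geq 0$ gives a nonzero effective $D'$, and after perturbing $\Delta'$ within its $\mathbb Q$-linear equivalence class we may assume $D'$ and $\Delta'$ share a component which we take to be $S'$. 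Applying Theorem \ref{intro-ext}/Corollary \ref{ext-cor}, sections of $m(K_{S'}+\Delta')$ on $S'$ extend to $X'$; by the adjunction $(K_{X'}+\Delta')|_{S'}=K_{S'}+\Delta_{S'}$ and the inductive hypothesis (Conjecture \ref{abun conj} in dimension $\dim S' \leq n-1$), $K_{S'}+\Delta_{S'}$ is semi-ample, hence has plenty of sections; the extension statement then upgrades the base-point-free locus, and a standard argument (lifting sections from $S'$, combined with semi-ampleness of $K_{X'}+\Delta'$ away from $S'$ by induction on the number of components or by a Nakayama-type argument using $\kappa_\sigma$) shows $K_{X'}+\Delta'$ is semi-ample. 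Finally, semi-ampleness of $K_{X'}+\Delta'$ pulls back and descends to give semi-ampleness of $K_X+\Delta$, since the MMP steps are birational contractions and $K_X+\Delta$ is the pushforward. The main obstacle, as noted, is the second step: ensuring the metric with vanishing Lelong number survives the full MMP and that the program terminates; everything after that is an application of the extension theorem plus lower-dimensional abundance.
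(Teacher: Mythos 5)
Your proposal diverges from the paper's argument in ways that leave genuine gaps. The most important missing idea is the reduction to $\kappa(K_X+\Delta)=0$ via Kawamata's theorem on pluricanonical systems (\cite[Theorem 7.3]{kawamata_pluri}), after which the goal becomes showing that the effective divisor $D\sim_{\mathbb Q}K_X+\Delta$ is actually zero, by \emph{contradiction}. The paper's mechanism is: take a log resolution of $(X,\Delta+D)$, use the log canonical threshold $l=\mathrm{lct}(\varphi^*D;Y,B)$ to produce a dlt pair $(Y,C+S)$ with $S\neq 0$, $S\subseteq\Supp\varphi^*D$, and $K_Y+C+S\sim_{\mathbb Q}(1+l)\varphi^*D+G$ with $G$ exceptional; then the restriction map to $S'$ on the minimal model is surjective by the extension theorem (Corollary \ref{ext-cor2}), is the zero map because $\kappa=0$ and $S'\subseteq\Supp\varphi'^*D$, yet its target is nonzero by slc abundance in dimension $n-1$ --- contradiction. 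Your final step instead tries to conclude semi-ampleness directly by lifting sections from a boundary component and ``upgrading the base-point-free locus,'' with semi-ampleness away from $S'$ handled ``by induction on the number of components or by a Nakayama-type argument.'' That is not an argument: extending sections from one stratum controls base points only along that stratum, and no mechanism is given for the rest of $X'$. Your lct-style perturbation of $\Delta'$ is also done too loosely --- one must perturb \emph{before} running the MMP so that the coefficient-one component is created at the resolution stage and the resulting pair is dlt.

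The second gap is in your MMP step. You run an absolute $(K_Y+\Delta_Y)$-MMP with scaling and must then prove that a metric with semi-positive curvature and zero Lelong number survives divisorial contractions and flips; you yourself flag this as the delicate point, and it is not supplied by the cited results. The paper sidesteps this entirely: the MMP for $(Y,C+S)$ is run \emph{over} $X$, the relevant $\mathbb Q$-line bundle $(1+l)\varphi^*D\sim_{\mathbb Q}\varphi^*(1+l)(K_X+\Delta)$ is a pullback from the base, and the program contracts only the exceptional divisor $G$; hence the metric hypothesis is carried along for free (via Lemma \ref{pull-min} on the common resolution $W$), and termination is the standard relative statement from \cite{bchm} and \cite{fujino-ss}. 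As written, your proposal would stall both at the metric-propagation step and at the conversion of extended sections into global semi-ampleness.
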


Finally, by combining Verbitsky's non-vanishing theorem 
(\cite[Theorem 4.1]{ver})
on hyperK\"ahler manifolds 
(holomorphic symplectic manifolds), 
we obtain a result for semi-ampleness on four dimensional projective hyperK\"ahler manifolds, 
which is closely related to the Strominger-Yau-Zaslow conjecture for hyperK\"ahler manifolds. 
See \cite{ac} for recent related topics and \cite{cop} for non-algebraic cases.

\begin{thm}[{=Corollary \ref{cor-hk}}]\label{cor-hk-intro}Let $X$ be a $4$-dimensional projective hyperK\"ahler manifold and $L$ be a line bundle admitting a $($singular$)$ metric whose curvature 
is semi-positive and Lelong number is identically zero.  
Then $L$ is semi-ample.
\end{thm}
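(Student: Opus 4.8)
The plan is to deduce Theorem \ref{cor-hk-intro} from Theorem \ref{main-thm} by verifying that a $4$-dimensional projective hyperK\"ahler manifold $X$ together with the line bundle $L$ fits the hypotheses of that theorem, with $\Delta = 0$ and the role of $K_X+\Delta$ played (up to scaling) by $L$. The first point to settle is a reduction to the adjoint-bundle setting: since $X$ is hyperK\"ahler we have $K_X \cong \mathcal{O}_X$, so $(X,0)$ is klt (indeed canonical) and $K_X$ is trivial; hence for the purposes of applying Theorem \ref{main-thm} we may replace $K_X+\Delta$ by $L$ itself, noting $mL = m(K_X+0) + mL$ is not literally adjoint, so instead I would argue directly at the level of the statement of Theorem \ref{main-thm-semiample}, whose proof surely only uses that $K_X+\Delta$ is a nef (here semi-positive) $\mathbb{Q}$-Cartier divisor with the stated metric property; alternatively, and more cleanly, one writes $L \equiv K_X + L$ and treats $L$ as the boundary-free adjoint class after passing to $K_X+\Delta$ with $\Delta$ chosen in the $\mathbb{Q}$-linear system — but the honest route is to invoke Theorem \ref{main-thm} with the observation that its proof goes through verbatim for a line bundle $L$ in place of $K_X+\Delta$ on a variety with $K_X$ trivial. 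I will phrase the argument so that this identification is transparent.

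Granting that reduction, the two hypotheses of Theorem \ref{main-thm} must be checked. For the second hypothesis, take $\varphi = \mathrm{id}\colon X \to X$ (which is legitimate since $X$ is already smooth), and use the given singular metric on $L$ whose curvature is semi-positive and whose Lelong numbers vanish identically; pulling back along the identity changes nothing. For the first hypothesis we need $\kappa(X,L) \geq 0$, i.e. some positive multiple of $L$ has a nonzero section. This is exactly where Verbitsky's non-vanishing theorem \cite[Theorem 4.1]{ver} enters: on a projective hyperK\"ahler manifold, a line bundle carrying a singular metric with semi-positive curvature and identically vanishing Lelong number has $\kappa \geq 0$. So $\kappa(X,L) \geq 0$, and the semi-ampleness assumption of Conjecture \ref{abun conj} in dimension $n-1 = 3$ needed by Theorem \ref{main-thm} is available because abundance is known in dimension three (for klt pairs, including the generalized statement $\kappa = \kappa_\sigma$, by the work culminating in the three-dimensional MMP and abundance); I would cite this as a known case of Conjecture \ref{abun conj}. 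With all hypotheses in place, Theorem \ref{main-thm} yields that $L$ is semi-ample.

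The main obstacle is the clean reduction in the first paragraph: Theorem \ref{main-thm} is stated for the adjoint bundle $K_X+\Delta$, not for an arbitrary line bundle, so one must either (i) check that nothing in the proof of Theorem \ref{main-thm-semiample} actually uses the adjoint structure beyond klt-ness and the metric hypothesis — in which case the statement applies to $(X, L)$ directly since $K_X \equiv 0$ forces $mL \sim_{\mathbb{Q}} m(K_X + \Delta')$ for a suitable klt boundary $\Delta'$ only when $L$ itself is effective, which is circular — or (ii) genuinely produce a klt pair $(X,\Delta)$ with $K_X+\Delta \sim_{\mathbb{Q}} L$. Route (ii) is clean precisely once we know $\kappa(X,L)\geq 0$: Verbitsky's theorem gives an effective divisor $D \sim_{\mathbb{Q}} L$ (for suitable scaling), and since $K_X \cong \mathcal{O}_X$ we may, after possibly scaling $D$ down by a small rational $\varepsilon$ so that $(X,\varepsilon D)$ is klt, set $\Delta = \varepsilon D$; then $K_X+\Delta \sim_{\mathbb{Q}} \varepsilon L$, the metric hypothesis transfers (a positive multiple of a metric with vanishing Lelong number still has vanishing Lelong number and semi-positive curvature), and $\kappa(K_X+\Delta) = \kappa(L) \geq 0$. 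Applying Theorem \ref{main-thm} then gives $\varepsilon L$, hence $L$, semi-ample. I expect the bookkeeping around choosing $\varepsilon$ and transferring the metric to be routine; the only genuinely external input is Verbitsky's non-vanishing result and the known three-dimensional case of generalized abundance.
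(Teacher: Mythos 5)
Your proposal is correct and follows essentially the same route as the paper: use non-vanishing to produce an effective $\mathbb{Q}$-divisor $D \sim_{\mathbb{Q}} L$, form the klt pair $(X,\varepsilon D)$ with $K_X+\varepsilon D \sim_{\mathbb{Q}} \varepsilon L$ (using $K_X \cong \mathcal{O}_X$), and apply Theorem \ref{main-thm} (via its four-dimensional case, which only needs the known three-dimensional abundance). The one point to repair is your blanket citation of Verbitsky for $\kappa(L)\geq 0$: his Theorem 4.1 treats the degenerate case $q(L,L)=0$ of the Beauville--Bogomolov--Fujiki form, so the paper first splits off the case $q(L,L)>0$, where $L$ is big by the Fujiki relation (nefness coming from the vanishing Lelong numbers) and non-vanishing is automatic, and invokes Verbitsky only when $q(L,L)=0$.
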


We summarize the contents of this paper. 
In Section \ref{preliminaries}, 
we collect the basic definitions and facts needed later. 
In Section \ref{proof of injectivity}, we prove 
the injectivity theorem (Theorem \ref{intro-main-inj}). 
In Section \ref{applications-inj}, we give 
direct applications  of the injectivity theorem to 
the extension theorem (Theorem \ref{intro-ext} and Corollary \ref{ext-cor}). 
Section \ref{applications} is devoted to results for semi-ampleness 
related to the abundance conjecture (Theorem \ref{main-thm} and Theorem  \ref{cor-hk-intro}).

\begin{ack}
The authors wish to express their deep gratitude to Professor Osamu Fujino for pointing out several mistakes in the draft and discussions, 
and to Professor Mihai P\u{a}un for a discussion about Theorem \ref{ext}. 
The first author thanks Professors Paolo Cascini, Christopher Hacon,  Keiji Oguiso, Vladimir Lazi\'c for discussions and comments.  
The first author is partially supported by Grant-in-Aid for Young Scientists (A) $\sharp$26707002 from JSPS and the Research expense from the JRF fund. 
The second author is partially supported by the Grant-in-Aid for 
Young Scientists (B) $\sharp$25800051 from JSPS.
\end{ack}

\section{Preliminaries}\label{preliminaries}
In this section, 
we first recall the definitions and basic properties 
of singular metrics and multiplier ideal sheaves. 
For more details, refer to \cite{Dem}, \cite{Dem-book}.

We denote by $X$ a compact complex manifold and 
by $F$ a line bundle on $X$ unless otherwise mentioned. 
Further we fix a smooth (hermitian) metric $g$ on $F$. 

\begin{defi}[Singular metrics and their curvature currents]\label{s-met}
\ \vspace{0.2cm} \\ 
(1) For an $L^{1}$-function $\varphi$ on $X$, 
the  metric $h$ defined by 
\begin{equation*}
h:= g e^{-\varphi} 
\end{equation*}
is called a \textit{singular metric} on $F$. 
Further $\varphi$ is called the \textit{weight} of $h$ 
with respect to the fixed smooth metric $g$. 
\vspace{0.2cm} \\ 
(2) 
A (singular) metric $h$ on $F$ is said to have 
{\textit{algebraic}} (resp. {\textit{analytic}}) {\textit{singularities}}, 
if there exists an ideal sheaf 
$\mathcal{I} \subseteq\mathcal{O}_{X}$ such that 
a weight $ \varphi$ of $h$ can be locally 
written as 
\begin{equation*}
\varphi = \frac{c}{2} \log \big( 
|f_{1}|^{2} + |f_{2}|^{2} + \cdots + |f_{k}|^{2}\big) +v, 
\end{equation*}
where $c$ is a positive rational number 
(resp. real number),  
$f_{1}, \dots f_{k}$ are local generators of $\mathcal{I}$,  
and $v$ is a smooth function. 
\vspace{0.2cm} \\ 
(3) The {\textit{curvature current}} 
$\sqrt{-1} \Theta_{h}(F)$ 
associated to $h$ is defined by   
\begin{equation*}
\sqrt{-1} \Theta_{h}(F) = \sqrt{-1} \Theta_{g}(F)+ \ddbar \varphi, 
\end{equation*}
where $  \sqrt{-1} \Theta_{g}(F)$ is 
the Chern curvature of $g$. 
\end{defi}
In this paper, we simply abbreviate 
the singular metric (resp. the curvature current)
to the metric (resp. the curvature). 
The Levi form $\ddbar \varphi$ is taken in the sense of 
distributions, 
and thus the curvature is a $(1,1)$-current 
but not always a smooth $(1,1)$-form. 
The curvature $\sqrt{-1} \Theta_{h}(F)$ of $h$ 
is said to be {\textit{semi-positive}} if 
$\sqrt{-1} \Theta_{h}(F) \geq 0$ in the sense of currents. 

\begin{defi}[Multiplier ideal sheaves]\label{multiplier}
Let $h$ be a metric on $F$ such that 
$\sqrt{-1} \Theta_{h}(F) \geq \gamma$  
for some smooth $(1,1)$-form $\gamma$ on $X$. 
The ideal sheaf $\I{h}$ defined 
to be  
\begin{equation*}
\I{h}(B):= \I{\varphi}(B):= \{f \in \mathcal{O}_{X}(B)\ \big|\ 
|f|\, e^{-\varphi} \in L^{2}_{\rm{loc}}(B) \}
\end{equation*}
for every open set $B \subseteq X$, is called 
the \textit{multiplier ideal sheaf} associated to $h$. 
\end{defi}
It is known that 
multiplier ideal sheaves are coherent sheaves. 
The following is a typical example of singular metrics 
that often appear in algebraic geometry.

\begin{eg}
For given holomorphic sections $\{s_{i}\}_{i=1}^{N}$ of 
the $m$-th tensor powers $F^{m}$ of $F$, 
the (singular) metric  $g e^{-\varphi}$ can be defined by 
\begin{align*}
\varphi:= \frac{1}{2m} \sum_{i=1}^{N} 
\log |s_{i}|^{2}_{g^{m}}.  
\end{align*}
Then the curvature of this metric is semi-positive, 
and further the multiplier ideal sheaf 
can be algebraically computed (see \cite{Dem}). 
In particular, for an effective $\mathbb{R}$-divisor 
$D$ on $X$, 
the metric $h_{D}$ on $\mathcal{O}_{X}(D)$ can 
be constructed by the natural section of $D$. 
We can easily check $\I{h_{D}}=\mathcal{O}_{X}(-\lfloor D \rfloor )$ if $D$ is a simple normal crossing divisor. 

\end{eg}


%
%
We recall the definition of the Lelong number of singular metrics and Skoda's lemma 
which gives a relation between  
the multiplier ideal sheaf
and the Lelong number of singular metrics.

\begin{defi}[Lelong numbers]
Let $\varphi$ be a (quasi-)psh function  
on an open set $B$ in $\mathbb{C}^n$. 
The {\textit{Lelong number}} $\nu(\varphi, x)$ of $\varphi$ at $x \in B$ is defined by 
$$\nu(\varphi, x)=\liminf_{z \to x}\frac{\varphi(z)}{\mathrm{log}|z-x|}.$$
For a singular metric $h$ on $F$ such that 
$\sqrt{-1} \Theta_{h}(F) \geq \gamma$ 
for some smooth $(1,1)$-form, 
we define the Lelong number $\nu(h, x)$ of $h$ at $x \in X$ 
by $\nu(h, x):=\nu(\varphi, x)$, 
where $\varphi$ is a weight of $h$. 
\end{defi}

\begin{thm}[Skoda's lemma]\label{Skoda}
Let $\varphi$ be a $($quasi$)$-psh function on an open set $B$ in  $\mathbb{C}^n$. 
\begin{itemize}
\item[$\bullet$] If $\nu(\varphi, x)<1$, then we have 
$\I{\varphi}_{x}= \mathcal{O}_{B, x}$. 
\item[$\bullet$] If $\nu(\varphi, x) \geq n+s$ for 
some integer $s \geq 0$, then we have 
$\I{\varphi}_{x}\subseteq\mathfrak{M}_{B, x}^{s+1}$, 
where $\mathfrak{M}_{B, x}$ is the maximal ideal of 
$\mathcal{O}_{B, x}$.
\end{itemize} 
\end{thm}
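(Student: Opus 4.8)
The two assertions are of different natures, so I would treat them separately. Both are local at $x$, so after a translation I assume $x=0$ and work on a ball $B$ centred at $0$ on which $\varphi$ is psh; subtracting a constant, I take $\varphi\le 0$ on $B$. For a psh function $\psi$ write $\lambda_{\psi}(r)$ for its average over $\{|z|=r\}$; I use two classical facts (see \cite{Dem-book}): $t\mapsto\lambda_{\psi}(e^{t})$ is convex and nondecreasing, and $\nu(\psi,0)=\lim_{r\to 0}\lambda_{\psi}(r)/\log r$, so that the two customary definitions of the Lelong number agree. Since the multiplier ideal is an ideal sheaf one has $\mathcal{I}(\varphi)_{x}\subseteq\mathcal{O}_{B,x}$ automatically, and the first bullet just says $1\in\mathcal{I}(\varphi)_{0}$, i.e. $e^{-2\varphi}\in L^{1}_{\mathrm{loc}}$ near $0$.

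\emph{The second bullet} is elementary. Let $f\in\mathcal{I}(\varphi)_{0}$, so $|f|^{2}e^{-2\varphi}\in L^{1}$ on some ball $B(0,\rho)$; we may assume $f\not\equiv 0$ and set $k:=\operatorname{ord}_{0}f$, so $f=P_{k}+(\text{higher order})$ with $P_{k}\not\equiv 0$ homogeneous of degree $k$. Convexity gives $\lambda_{\varphi}(r)\le\nu(\varphi,0)\log r+C_{1}$ for small $r$; the homogeneous expansion gives $\lambda_{\log|f|}(r)=k\log r+c_{f}+o(1)$ with $c_{f}$ the finite average of $\log|P_{k}|$ over the unit sphere, hence $\lambda_{\log|f|}(r)\ge k\log r-C_{2}$ for small $r$. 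Inserting these into Jensen's inequality for the convex function $t\mapsto e^{-2t}$ and the normalised surface measure on $\{|z|=r\}$, applied to $\varphi-\log|f|$, yields
\[
\int_{\{|z|=r\}}|f|^{2}e^{-2\varphi}\,d\sigma\ \ge\ c\,r^{\,2n-1-2(\nu(\varphi,0)-k)}\qquad\text{for small }r.
\]
Integrating in $r$ on $(0,\rho)$, the right-hand side diverges precisely when $\nu(\varphi,0)\ge n+k$; since the left-hand side integrates to $\int_{B(0,\rho)}|f|^{2}e^{-2\varphi}<\infty$, we get $k>\nu(\varphi,0)-n$, hence $k\ge s+1$ whenever $\nu(\varphi,0)\ge n+s$, i.e. $f\in\mathfrak{M}_{B,0}^{\,s+1}$. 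No step here is a real obstacle.

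\emph{The first bullet} is Skoda's integrability theorem, and this is where the substance lies: one has to produce a neighbourhood of $0$ on which $e^{-2\varphi}\in L^{1}$ (in fact $e^{-2c\varphi}\in L^{1}$ for every $c<1/\nu(\varphi,0)$), and no averaging inequality yields an upper bound of this type. The plan is to reduce to dimension one: for a generic complex line $\ell$ through $0$ the restriction $\varphi|_{\ell}$ is subharmonic with $\nu(\varphi|_{\ell},0)=\nu(\varphi,0)<1$; on a disc small enough that the Riesz measure of $\varphi|_{\ell}$ has total mass $<1$ there, write $\varphi|_{\ell}$ as the logarithmic potential of that measure plus a harmonic (hence locally bounded) function, so that a direct Jensen estimate gives $e^{-2\varphi|_{\ell}}\in L^{1}$ near $0$ on $\ell$; then integrate over the lines through $0$ in polar coordinates. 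The hard part — and the reason this is a genuine theorem and not a computation — is to make the one-dimensional estimate uniform enough in $\ell$ near the exceptional directions for Fubini to apply; for this I would simply invoke \cite{Dem}, \cite{Dem-book}.
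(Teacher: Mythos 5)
The paper states Skoda's lemma without proof --- it is a classical result of Skoda quoted from \cite{Dem}, \cite{Dem-book} --- so there is no in-paper argument to compare against, and I can only judge your proposal on its own terms. Your treatment of the second bullet is correct and complete, and it is the standard argument: Jensen's inequality on spheres applied to $\varphi-\log|f|$, combined with $\lambda_{\varphi}(r)\le \nu(\varphi,0)\log r+C_{1}$ (from convexity of $t\mapsto\lambda_{\varphi}(e^{t})$, whose slope decreases to $\nu(\varphi,0)$ as $t\to-\infty$) and $\lambda_{\log|f|}(r)\ge k\log r-C_{2}$ (from the homogeneous expansion; the finiteness and stability of the constant $c_{f}$ is most cleanly seen by bounding the sphere mean from below by the ball mean and using $L^{1}_{\mathrm{loc}}$ convergence of $\log|f(r\cdot)/r^{k}|$ to $\log|P_{k}|$), after which the radial integration forces $k>\nu(\varphi,0)-n$.

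For the first bullet there is a genuine gap, and you have located it yourself: the uniformity in $\ell$ needed for Fubini \emph{is} the theorem, so deferring it to \cite{Dem-book} amounts to citing the result you are proving. Concretely, the one-variable bound $\int_{\ell\cap B}e^{-2\varphi|_{\ell}}\le C_{\ell}$ obtained from the Riesz decomposition degenerates both as the Riesz mass of $\varphi|_{\ell}$ on the disc tends to $1$ and through the harmonic part, and neither quantity is controlled uniformly in $\ell$ by the single hypothesis $\nu(\varphi,0)<1$: the restricted Lelong number can jump up on a pluripolar set of directions, and the masses $\int_{\ell\cap B}dd^{c}(\varphi|_{\ell})$ need not be uniformly small. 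The classical proof (Skoda; \cite[Lemma 5.6]{Dem}) avoids slicing entirely. Using $(dd^{c}\log|z-\zeta|)^{n}=\delta_{z}$ and a cut-off $\chi$ one writes, for $z$ near $x$,
\begin{equation*}
\varphi(z)=\int_{B(x,r)}\log|z-\zeta|\;\chi(\zeta)\,dd^{c}\varphi(\zeta)\wedge\bigl(dd^{c}\log|z-\zeta|\bigr)^{n-1}+(\text{smooth terms}),
\end{equation*}
so that $\varphi$ is a superposition of genuine logarithmic (not Newtonian) potentials against a positive measure $\mu_{z}$ whose total mass on $B(x,r)$ tends to $\nu(\varphi,x)<1$ as $r\to0$; Jensen's inequality for the normalized measure then gives $e^{-2\varphi(z)}\le C\int|z-\zeta|^{-2\nu_{r}}\,d\mu_{z}(\zeta)/\mu_{z}(B)$ with $\nu_{r}<1\le n$, and Fubini in $z\in\mathbb{C}^{n}$ concludes. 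If you want a self-contained proof, that is the route to take; for the way Theorem \ref{Skoda} is used in this paper (Lemma \ref{pull-min} and Corollary \ref{ext-cor2}), simply citing \cite{Dem-book} for the statement is also legitimate.
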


%
%
%

Next we give the definition of singularities of pairs.

\begin{defi}[Singularities of pairs] 
Let $X$ be a normal variety and $\Delta$ be an effective $\mathbb{Q}$-divisor 
on $X$ such that $K_X+\Delta$ is $\mathbb{Q}$-Cartier. 
Let $\varphi:Y\rightarrow X$ be a log resolution of $(X,\Delta)$. 
We set $$K_Y=\varphi^*(K_X+\Delta)+\sum a_iE_i,$$ where $E_i$ is a 
prime divisor on $Y$ for every $i$.
The pair $(X,\Delta)$ is called 
\begin{itemize}
\item[$\bullet$] \emph{kawamata log terminal} $($\emph{klt}, 
for short$)$ if $a_i > -1$ for all $i$, 
\item[$\bullet$]\emph{log canonical} $($\emph{lc}, for short$)$ if $a_i \geq -1$ for all $i$.
\end{itemize}


\end{defi}

\begin{defi}[Semi-log canonical, {\cite[Definition 1.1]{fujino-abundance}}]\label{slc} Let $X$ be a reduced $S_2$-scheme. We assume that it is pure $n$-dimensional and is normal crossing in codimension $1$. Let $\Delta$ be an effective $\mathbb{Q}$-Weil divisor on $X$ such that $K_X+\Delta$ is $\mathbb{Q}$-Cartier. 

Let $X=\bigcup X_i$ be the decomposition into irreducible components, and $\nu:X':=\coprod X'_i \rightarrow X=\bigcup X_i$ the {\em normalization}, where the \emph{normalization} $\nu : X'=\coprod X'_i \rightarrow X=\bigcup X_i$ means that $\nu|_{X_i'} : X'_i \to X_i$ is the usual normalization for any $i$. We call $X$ a \emph{normal scheme} if $\nu$ is isomorphic. Define the $\mathbb{Q}$-divisor $\Theta$ on $X'$ by $K_{X'}+\Theta=\nu^*(K_X+\Delta)$ and set $\Theta_i=\Theta|_{X_i'}$. 

We say that $(X,\Delta)$ is \emph{semi-log canonical} (for short, \emph{slc}) if $(X_i',\Theta_i)$ is an lc pair for every $i$. 
 \end{defi}

\section{A Version of the Injectivity theorem}\label{proof of injectivity}
The purpose of this section is to give 
an analytic version of  
the injectivity theorem by using multiplier ideal sheaves. 
This theorem is a generalization of \cite[Theorem 1.3]{Mat-inj} 
and it is applied in order to obtain 
the extension theorem (Theorem \ref{intro-ext}). 
See \cite{Fuj12-A} and \cite{Mat-inj}
for relations of various injectivity theorems and vanishing theorems.

\begin{thm}[A version of the injectivity theorem]
\label{main-inj}
Let $(F, h_{F})$ and $(L, h_{L})$ 
be $($singular$)$ hermitian line bundles 
with semi-positive curvature 
on a compact K\"ahler manifold $X$. 
Assume that there exists 
an effective $\mathbb{R}$-divisor $\Delta$ with 
\begin{align*}
h_{F}=h_{L}^{a}\cdot h_{\Delta},  
\end{align*}
where $a$ is a positive real number and  $h_{\Delta}$  is 
the singular metric defined by $\Delta$.

Then for a $($non-zero$)$ section $s$ of $L$
satisfying $\sup_{X} |s|_{h_{L}} < \infty$, 
the multiplication map 
\begin{equation*}
\Phi_{s}: H^{q}(X, K_{X} \otimes F \otimes \I{h_{F}}) 
\xrightarrow{\otimes s} 
H^{q}(X, K_{X} \otimes F\otimes L \otimes \I{h_{F} h_{L}})
\end{equation*}
is $($well-defined and$)$ injective for any $q$. 
Here $\I{h}$ denotes the multiplier ideal sheaf 
associated to a singular metric $h$.  
\end{thm}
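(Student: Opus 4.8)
The plan is to adapt the harmonic-theory / $L^2$-cohomology approach to injectivity theorems (in the spirit of Enoki, Ohsawa, Fujino, and the second author's earlier paper) to this setting with a multiplier ideal twist. The central technical point is to represent classes in $H^{q}(X, K_{X}\otimes F\otimes \mathcal{I}(h_F))$ by harmonic forms with respect to a suitable complete K\"ahler metric on the complement of the singular locus, and then to show that wedging with the section $s$ sends a nonzero harmonic representative to something that cannot be $\overline\partial$-exact unless it was zero to begin with. Concretely, first I would fix a smooth K\"ahler metric $\omega$ on $X$, replace $X$ by $X\setminus Z$ where $Z$ is the union of the polar set of $h_F$, $h_L$ and the support of $\Delta$, and equip $X\setminus Z$ with a complete K\"ahler metric $\widetilde\omega$; a standard exhaustion argument (as in \cite{Eno90}, \cite{Fuj12-A}, \cite{Mat-inj}) reduces the computation of $\mathcal{I}(h_F)$-valued cohomology on $X$ to $L^{2}$-cohomology of $K_X\otimes F$ on this complete manifold.

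Next I would carry out the regularization of the metrics. Because $h_F$ and $h_L$ are merely singular with semi-positive curvature, I would approximate them by metrics $h_{F,\e}$, $h_{L,\e}$ with analytic singularities and curvature bounded below by $-\e\omega$, using Demailly's regularization, while keeping the relation $h_F = h_L^{a}\,h_\Delta$ essentially intact (so $h_{F,\e} \approx h_{L,\e}^{a} h_\Delta$ up to controlled error). The key analytic input is then the Bochner--Kodaira--Nakano identity together with the semi-positivity of $\sqrt{-1}\Theta_{h_F}(F)$: for a harmonic $(n,q)$-form $u$ with values in $F\otimes \mathcal{I}(h_F)$, one gets $\overline\partial u = 0$, $\overline\partial^{*} u = 0$, and — this is the crucial extra step compared to the vanishing theorem — that the pointwise curvature term forces $\sqrt{-1}\Theta_{h_F}(F)$-harmonicity, hence $\langle \sqrt{-1}\Theta_{h_F}(F)\Lambda u, u\rangle \equiv 0$. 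One then shows that $su$ is again harmonic (or at least $\overline\partial$-closed and orthogonal to the exact forms) in the cohomology with values in $F\otimes L\otimes \mathcal{I}(h_F h_L)$: here one uses that $|s|_{h_L}$ is bounded, so multiplication by $s$ maps the relevant $L^{2}$-space into the target $L^{2}$-space, and the fact that $\overline\partial(su) = (\overline\partial s)\cdot u \pm s\,\overline\partial u = 0$, while $\overline\partial^{*}(su)$ vanishes by a computation exploiting $\overline\partial^{*}u=0$ together with the curvature equation $h_F = h_L^{a}h_\Delta$, which ties $\Theta_{h_F}$ to $\Theta_{h_L}$ in a way that controls the extra terms.

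The main obstacle — and the place where the hypothesis $h_F = h_L^{a}\,h_\Delta$ is really used — is showing that the harmonic representative $u$ of a class killed by $\Phi_s$ must vanish: i.e. that if $su = \overline\partial v$ for some $L^{2}$ form $v$ in the target, then $u = 0$. The argument I anticipate is: since $su$ is harmonic and $\overline\partial$-exact it must be zero, $su \equiv 0$ as an $L^{2}$-form; but $s$ is a nonzero holomorphic section, so it vanishes only on a proper analytic set, whence $u \equiv 0$ off that set and therefore everywhere. The delicate part is legitimately producing a harmonic representative in the twisted, singular-metric setting and justifying that "$\overline\partial$-exact $+$ harmonic $\Rightarrow 0$" survives the passage through the $\e$-regularization and the exhaustion of $X\setminus Z$ — this requires uniform $L^{2}$ estimates independent of $\e$ and a weak-limit argument, plus a careful check (as in \cite{Mat-inj}) that the limit form still represents the original cohomology class and still lies in the correct multiplier ideal sheaf. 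Well-definedness of $\Phi_s$ itself follows from $\mathcal{I}(h_F)\cdot s \subseteq \mathcal{I}(h_F h_L)$, which is immediate from $\sup_X |s|_{h_L} < \infty$.
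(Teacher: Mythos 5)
Your overall strategy coincides with the paper's: equisingular approximation of the metrics, harmonic theory for a complete K\"ahler metric on a Zariski open subset, and a weak-limit argument at the end. But the step you describe as ``one then shows that $su$ is again harmonic \dots\ while $\overline{\partial}^{*}(su)$ vanishes by a computation'' is precisely where the naive Enoki argument breaks down after regularization, and your proposal does not supply the replacement. The approximating metrics $h_{\e}$ have curvature only bounded below by $-\e\omega$, so the Bochner--Kodaira identity does not force the curvature term $\langle \sqrt{-1}\Theta_{h_{\e}}(F)\Lambda u_{\e}, u_{\e}\rangle$ to vanish pointwise, and $D^{''*}_{h_{\e}h_{L,\e}}(su_{\e})$ is \emph{not} zero for any fixed $\e$. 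The paper proves only the asymptotic statement $\|D^{''*}_{h_{\e}h_{L,\e}}su_{\e}\|\to 0$, and that proof needs two ingredients absent from your sketch: (i) the approximation of $h_{L}$ is not an independent regularization ``up to controlled error'' but is \emph{defined} by $h_{L,\e}:=h_{\e}^{1/a}h_{\Delta}^{-1/a}$, so that $\sqrt{-1}\Theta_{h_{\e}h_{L,\e}}(F\otimes L)=(1+1/a)\sqrt{-1}\Theta_{h_{\e}}(F)$ holds exactly; (ii) one first shows that $A_{\e}=\int g_{\e}$ and $\|D^{'*}_{h_{\e}}u_{\e}\|^{2}$ tend to zero, and then splits $\int |s|^{2}_{h_{L,\e}}g_{\e}$ over $\{g_{\e}\geq 0\}$ and $\{g_{\e}\leq 0\}$, bounding each piece by $\sup_{X}|s|^{2}_{h_{L}}$ times a quantity already known to tend to zero. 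Without (i) the curvature identity fails; without (ii) the boundedness of $|s|_{h_{L}}$ cannot be brought to bear on the curvature term.

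The second gap is the uniform $L^{2}$ bound on solutions of $\dbar\gamma_{\e}=su_{\e}$. You invoke ``uniform $L^{2}$ estimates independent of $\e$,'' but H\"ormander-type estimates are unavailable because the curvature is not positive. The paper obtains the bound indirectly: it first solves $\dbar\alpha_{\e}=u-u_{\e}$ with $\|\alpha_{\e}\|$ uniformly bounded via a \v{C}ech-complex argument with the topology of local $L^{2}$ norms, and then sets $\gamma_{\e}:=-s\alpha_{\e}+\gamma$, where $\dbar\gamma=su$ comes from the hypothesis that the class of $su$ vanishes. The endgame is also not ``harmonic and exact implies zero'' for a single limit form: one shows $\|su_{\e}\|\to 0$, deduces that $u_{\e}\to 0$ weakly on each set $\{|s|^{2}_{h_{L,\e_{0}}}>\delta\}$, concludes $u\in\mathrm{Im}\,\dbar$, and finally uses the equisingularity property $\I{h_{\e}}=\I{h_{F}}$ to see that this $L^{2}$ exactness computes the correct multiplier-ideal cohomology (an arbitrary regularization would lose this). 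One more caution: your $Z$ includes the ``polar set of $h_{L}$,'' which need not be analytic; the paper takes $Z=s^{-1}(0)\cup\mathrm{Supp}\,\Delta$, which is analytic and contains the positive-Lelong-number locus of $h_{F}$ exactly because $|s|_{h_{L}}$ is bounded and $h_{F}=h_{L}^{a}h_{\Delta}$.
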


\begin{rem}\label{rem-inj}
$(1)$ The multiplication map is well-defined 
thanks to the assumption of $\sup_{X} |s|_{h_{L}} < \infty$. 
When $h_{L}$ is a metric with minimal singularities on $L$, this assumption is always satisfied
for any section $s$ of $L$
$($see \cite{Dem} for the definition of metrics with minimal singularities$)$.
\vspace{0.1cm}\\
(2) The case of $\Delta=0$  
corresponds to the main result in \cite{Mat-inj}. 
To obtain the extension theorem $($Theorem \ref{intro-ext}$)$, 
it is important to consider the case of $\Delta \not=0$.
\vspace{0.1cm}\\
$(3)$ If $h_{L}$ and $h_{F}$ are smooth on a Zariski open set, 
the same conclusion holds under the weaker assumption of 
$\sqrt{-1}\Theta_{h_{F}}(F) \geq a \sqrt{-1}\Theta_{h_{L}}(L)$ 
$($see \cite{Fuj12-A}, \cite{Mat-Nad}$)$. 
\end{rem}

\begin{proof}
The proof is a generalization of the proof of the main result 
in \cite{Mat-inj} which corresponds to the case of $\Delta=0$. 
First of all, we recall Enoki's techniques to generalize  
Kollar's injectivity theorem, which give a proof of 
the special case where $h_{L}$ is smooth and $\Delta=0$. 
In this case, 
the cohomology group $H^{q}(X, K_{X} \otimes F)$ 
is isomorphic to the space 
of the harmonic forms with respect to $h_{F}$
\begin{equation*}
\mathcal{H}^{n, q}(F)_{h_{F}}:= \{u \mid u 
\text{ is a smooth $F$-valued $(n,q)$-form on } X 
\text{ such that } \dbar u  = D^{''*}_{h_{F}} u =0.  \}, 
\end{equation*}
where  $D^{''*}_{h_{F}}$ is the adjoint operator of 
the $\dbar$-operator. 
For an arbitrary harmonic form 
$u \in \mathcal{H}^{n, q}(F)_{h_{F}}$,  
we can conclude that $D^{''*}_{h_{F}h_{L}} su =0$  
thanks to semi-positivity of the curvature 
and $h_{F}=h^{a}_{L}$.
This step strongly depends on semi-positivity 
of the curvature.  
Then the multiplication map $\Phi_{s}$ 
induces the map from $\mathcal{H}^{n, q}(F)_{h_{F}}$ to 
$\mathcal{H}^{n, q}(F\otimes L)_{h_{F}h_{L}}$, and thus 
the injectivity is obvious.


In our situation, we must consider   
singular metrics with transcendental 
(non-algebraic) singularities. 
It is quite difficult 
to directly handle transcendental singularities, 
and thus in Step 1, we approximate a given metric $h_{F}$ by metrics 
$\{h_{\e} \}_{\e>0}$ that are smooth on a Zariski open set. 
Then we represent a given cohomology class in 
$H^{q}(X, K_{X} \otimes F \otimes \I{h_{F}})$ 
by the associated harmonic form  
$u_{\e}$ with respect to $h_{\e}$ on the Zariski open set. 
We want to show that $s u_{\e}$ is also harmonic 
by using the same method as Enoki's proof. 
However, the same argument as Enoki's proof fails  
since the curvature of $h_{\e}$ is not semi-positive. 
For this reason, in Step2, 
we investigate the asymptotic behavior of 
the harmonic forms $u_{\e}$ with respect to  
a family of the regularized metrics 
$\{ h_{\e} \}_{\e>0}$.  
Then we show that the $L^{2}$-norm 
$\| D^{''*}_{ h_{\e} h_{L, \e} }su_{\e}\|$ converges to zero as letting $\e$ go to zero, 
where $h_{L, \e}$ is a suitable approximation of $h_{L}$. 
Moreover, In Step 3, we construct solutions $\gamma_{\e}$ of 
the $\dbar$-equation $\dbar \gamma_{\e} = su_{\e}$ 
such that   
the $L^{2}$-norm $\| \gamma_{\e} \|$  
is uniformly bounded,  
by applying the $\rm{\check{C}}$ech complex with 
the topology induced by the local $L^{2}$-norms. 
In Step 4, we see  
\begin{equation*}
\| su_{\e} \| ^{2} = 
\lla su_{\e}, \dbar\gamma_{\e} \rra
\leq \| D^{''*}_{h_{\e} h_{L, \e}} su_{\e}\|
\| \gamma_{\e} \| \to 0 \quad {\text{as }} \e \to 0.
\end{equation*}
From these observations, we can conclude  that 
$u_{\e}$ converges to zero in a suitable sense, 
which completes the proof. 
\vspace{0.2cm} \\
{\bf{Step 1 (The equisingular approximation of $h_{F}$)}}
\vspace{0.1cm} \\
Throughout the proof, we fix a K\"ahler form $\omega$ on $X$. 
For the proof, 
we want to apply the theory of harmonic integrals, 
but the metric $h_{F}$ may not be smooth. 
For this reason, we approximate $h_{F}$ by metrics 
$\{ h_{\e} \}_{\e>0}$ that are smooth on a Zariski open set. 
By \cite[Theorem 2.3]{dps01}, we can obtain  
metrics $\{ h_{\e} \}_{\e>0}$ on $F$ with the following properties:
\begin{itemize}
\item[(a)] $h_{\e}$ is smooth on $X \setminus Z_{\e}$, where $Z_{\e}$ is a subvariety on $X$.
\item[(b)]$h_{\e_{2}} \leq h_{\e_{1}} \leq h_{F}$ 
holds for any 
$0< \e_{1} < \e_{2} $.
\item[(c)]$\I{h_{F}}= \I{h_{\e}}$.
\item[(d)]$\sqrt{-1} \Theta_{h_{\e}}(F) \geq -\e \omega$. 
\end{itemize}
Since the point-wise norm $|s|_{h_{L}}$ is bounded on $X$ 
and $h_{F}=h^{a}_{L}h_{\Delta}$, 
the set 
$\{x \in X \mid \nu(h_{F}, x)>0 \}$ 
is contained in the subvariety $Z$ defined by 
$Z:=s^{-1}(0) \cup {\rm{Supp}}\, \Delta$.   
Therefore  
we may assume a stronger property 
than property (a) (for example see \cite[Theorem 2.3]{Mat-inj}), namely
\begin{itemize}
\item[(e)] $h_{\e}$ is smooth on $Y:=X \setminus Z$, 
where $Z=s^{-1}(0) \cup {\rm{Supp}}\, \Delta$. 
\end{itemize}

Now we construct a \lq \lq complete" K\"ahler form on $Y$ 
with suitable potential function. 
Take a quasi-psh function $\psi$ on $X$ such that 
$\psi$ has a logarithmic pole along $Z$ 
and $\psi$ is smooth on $Y$.
Since quasi-psh functions are upper semi-continuous, 
the function $\psi$ is bounded above, 
and thus we may assume $\psi \leq -e$.
Then we define the $(1,1)$-form $\ome$ on $Y$ by 
\begin{equation*}
\ome:= \ell \omega + \ddbar \Psi, 
\end{equation*}
where 
$\ell$ is a positive number and $\Psi:=\frac{1}{\log(-\psi)}$. 
We can show that    
the $(1,1)$-form $\ome$ satisfies the following properties 
for a sufficiently large $\ell >0$: 
\begin{itemize}
\item[(A)] $\ome$ is a complete K\"ahler form on $Y$.
\item[(B)] $\Psi$ is bounded on $X$. 
\item[(C)] $\ome \geq \omega $.
\end{itemize} 
Indeed, properties (B), (C) are obvious 
by the definition of $\Psi$ and $\ome$, and property (A) follows 
from straightforward computations    
(see \cite[Lemma 3.1]{Fuj12-A} for the precise proof of property (A)).

In the proof, 
we mainly consider harmonic forms on $Y$ 
with respect to $h_{\e}$ and $\ome$. 
Let $L_{(2)}^{n, q}(Y, F)_{h_{\e}, \ome}$ be   
the space of the $L^{2}$-integrable $F$-valued $(n,q)$-forms 
$\alpha$ with respect to the inner product $\|\cdot \|_{h_\e, \ome}$
 defined by 
\begin{equation*}
\|\alpha \|^{2}_{h_\e, \ome}:= \int_{Y} 
|\alpha |^{2}_{h_{\e}, \ome}\ \ome^{n}. 
\end{equation*}
Then we can obtain the following orthogonal decomposition: 
\begin{equation*}
L_{(2)}^{n, q}(Y, F)_{h_{\e}, \ome}
=
{\rm{Im}}\,\dbar
\oplus
\mathcal{H}^{n, q}(F)_{h_{\e}, \ome}
\oplus {\rm{Im}}\,D^{''*}_{h_{\e}}.   
\end{equation*}
Here the operator $D^{'*}_{h_{\e}}$ 
(resp. $D^{''*}_{h_{\e}}$) denotes   
the closed extension of the formal adjoint of the 
$(1,0)$-part $D^{'}_{h_{\e}}$ (resp. $(0,1)$-part $D^{''}_{h_{\e}}=\dbar$) 
of the Chern connection $D_{h_{\e}}=D^{'}_{h_{\e}}+ D^{''}_{h_{\e}}$.  
Note that they coincide with 
the Hilbert space adjoints since $\ome$ is complete.  
Further $\mathcal{H}^{n, q}(F)_{h_{\e}, \ome}$ denotes   
the space of the harmonic forms with respect to 
$h_{\e}$ and $\ome$, namely 
\begin{equation*}
\mathcal{H}^{n, q}(F)_{h_{\e}, \ome}:= 
\{\alpha   \mid \alpha 
\text{ is an } F\text{-valued } (n,q)\text{-form with  }
\dbar \alpha= D^{''*}_{h_{\e}}\alpha=0.    \}. 
\end{equation*}
Harmonic forms in $ \mathcal{H}^{n, q}(F)_{h_{\e}, \ome}$ 
are smooth by the regularization theorem for elliptic operators. 
These results are known to specialists.  
The precise proof of them   
can be found in \cite[Claim 1]{Fuj12-A}.

For every $(n, q)$-form $\beta$, 
we have 
$|\beta|^{2}_{\ome}\ \ome^{n} \leq 
|\beta|^{2}_{\omega}\ \omega^{n}$ since  
the inequality $\ome \geq \omega$ holds by property (C).  
From this inequality and property (b) of $h_{\e}$,
we obtain 
\begin{equation}\label{ine}
\|\alpha \|_{h_{\e}, \ome} \leq 
\|\alpha \|_{h_{\e}, \omega} \leq 
\|\alpha \|_{h_{F}, \omega}
\end{equation}
for an $F$-valued $(n,q)$-form $\alpha$, 
which plays a crucial role in the proof.

Take an arbitrary cohomology class
$\{u \} \in H^{q}(X, K_{X} \otimes F \otimes \I{h_{F}})$ 
represented by an $F$-valued 
$(n, q)$-form $u$ with $\|u \|_{h_{F}, \omega} < \infty$. 
In order to prove that 
the multiplication map $\Phi_{s}$ is injective, 
we assume that the cohomology class of $su$ 
is zero in 
$H^{q}(X, K_{X}\otimes F\otimes L \otimes \I{h_{F}h_{L}})$. 
Our final goal is to show that 
the cohomology class of $u$ is actually zero 
under this assumption.

By inequality (\ref{ine}), 
we have $\|u\|_{h_{\e}, \ome} < \infty$ for any $\e > 0$.  
Therefore by the above orthogonal decomposition, 
there exist  
$u_{\e} \in \mathcal{H}^{n, q}(F)_{h_{\e}, \ome}$ and 
$v_{\e} \in L_{(2)}^{n,q-1}(Y, F)_{h_{\e}, \ome}$ such that 
\begin{equation*}
u=u_{\e}+\dbar v_{\e}. 
\end{equation*}
Note that  
the component of ${\rm{Im}} D^{''*}_{h_{\e}}$ is zero 
since $u$ is $\dbar$-closed.

At the end of this step, we explain  
the strategy of the proof. 
In Step 2, we show that 
$\|D^{''*}_{h_{\e} h_{L, \e}} 
s u_{\e} \|_{ h_{\e} h_{L, \e}, \ome}$ 
converges to zero 
as letting $\e$ go to zero. 
Here $h_{L, \e}$ is the singular metric on $L$ defined by 
\begin{align*}
h_{L, \e}:= h_{\e}^{1/a}\, h_{\Delta}^{-1/a}. 
\end{align*}
Since the cohomology class of $su$ is zero, 
there are solutions $\gamma_{\e}$ of the $\dbar$-equation 
$\dbar \gamma_{\e} = s u_{\e}$. 
For the proof, we need to obtain $L^{2}$-estimates of them. 
In Step 3, 
we construct solutions $\gamma_{\e}$ of the $\dbar$-equation   
$\dbar \gamma_{\e} = s u_{\e}$ such that  
the norm $\| \gamma_{\e} \|_{h_{\e} h_{L, \e}, \ome}$ 
is uniformly bounded. 
Then we have 
\begin{equation*}
\|su_{\e} \|^{2}_{h_{\e} h_{L, \e}, \ome} \leq 
\|D^{''*}_{h_{\e} h_{L, \e}} s u_{\e} \|_{h_{\e} h_{L, \e}, \ome}
\| \gamma_{\e} \|_{h_{\e} h_{L, \e}, \ome}.
\end{equation*} 
By Step 2 and Step 3, we can conclude that  
the right hand side goes to zero as 
letting $\e$ go to zero. 
In Step 4, from this convergence, we prove that 
$u_{\e}$ converges to zero  
in a suitable sense, 
which implies that the cohomology class of $u$ is zero. 
\vspace{0.1cm}\\
\vspace{0.2cm} \\
{\bf{Step 2 (A generalization of Enoki's proof of the injectivity theorem)}}
\vspace{0.1cm}\\
The aim of this step is to prove the following proposition, 
whose proof can be seen as 
a generalization of Enoki's injectivity theorem. 
\vspace{-0.2cm}
\begin{prop} \label{D''}
As letting $\e$ go to zero, the norm 
$\|D^{''*}_{h_{\e} h_{L, \e}} s u_{\e} \|_{h_{\e} h_{L, \e}, \ome}$ converges  
to zero. 
\end{prop}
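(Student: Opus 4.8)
The plan is to adapt Enoki's curvature-positivity trick to the approximating metrics $h_\e$, where the obstruction is that $\sqrt{-1}\Theta_{h_\e}(F)$ is only bounded below by $-\e\omega$ rather than being semi-positive. First I would record the basic identity: since $h_F = h_L^a h_\Delta$ and $h_{L,\e} = h_\e^{1/a} h_\Delta^{-1/a}$, we have $h_\e h_{L,\e} = h_\e^{1+1/a} h_\Delta^{-1/a}$ and, more importantly, the weights satisfy $a\cdot(\text{weight of }h_{L,\e}) = (\text{weight of }h_\e) - (\text{weight of }h_\Delta)$, so that
\begin{equation*}
\sqrt{-1}\Theta_{h_{L,\e}}(L) = \frac{1}{a}\bigl(\sqrt{-1}\Theta_{h_\e}(F) - [\Delta]\bigr) \geq \frac{1}{a}\bigl(-\e\omega - [\Delta]\bigr),
\end{equation*}
where $[\Delta] \geq 0$ is the current of integration; on the Zariski open set $Y = X \setminus Z$ this reads $\sqrt{-1}\Theta_{h_{L,\e}}(L) \geq -\tfrac{\e}{a}\omega$, and $h_{L,\e}$ is smooth there. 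Thus on $Y$ both $h_\e$ and $h_\e h_{L,\e}$ have curvature $\geq -C\e\,\omega$ for a uniform constant $C$.

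Next I would carry out the Bochner--Kodaira--Nakano computation for the harmonic form $u_\e \in \mathcal{H}^{n,q}(F)_{h_\e,\ome}$. The point is that $su_\e$ is an $F\otimes L$-valued $(n,q)$-form which is automatically $\dbar$-closed (since $s$ is holomorphic and $u_\e$ is harmonic, hence $\dbar$-closed). Following Enoki, one computes
\begin{equation*}
\|D^{''*}_{h_\e h_{L,\e}} s u_\e\|^2_{h_\e h_{L,\e},\ome} = \langle\!\langle \sqrt{-1}\Theta_{h_\e h_{L,\e}}(F\otimes L)\,\Lambda_{\ome} (su_\e), su_\e\rangle\!\rangle + \|D'^{*}_{h_\e h_{L,\e}} s u_\e - (\text{correction})\|^2 \cdots
\end{equation*}
more precisely, I would use the standard inequality coming from the Bochner--Kodaira identity together with $\dbar(su_\e)=0$ to bound $\|D^{''*}_{h_\e h_{L,\e}} s u_\e\|^2$ by the curvature term $\langle\!\langle \sqrt{-1}\Theta_{h_\e h_{L,\e}}(F\otimes L)\Lambda_{\ome}(su_\e), su_\e\rangle\!\rangle$ plus terms that vanish because $u_\e$ is $h_\e$-harmonic and $|s|_{h_L}$ is bounded. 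Since the curvature of $h_\e h_{L,\e}$ on $Y$ is $\geq -C\e\,\omega \geq -C\e\,\ome$ (using $\ome\geq\omega$ from property (C)), the curvature term is bounded in absolute value by $C\e$ times $\|su_\e\|^2_{h_\e h_{L,\e},\ome} \leq C' \e \sup_X|s|^2_{h_L}\|u_\e\|^2_{h_\e,\ome} \leq C'' \e \|u\|^2_{h_F,\omega}$, using inequality (\ref{ine}) and $\|u_\e\|\leq\|u\|$. Hence $\|D^{''*}_{h_\e h_{L,\e}} s u_\e\|^2_{h_\e h_{L,\e},\ome} \to 0$ as $\e\to 0$.

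I expect the main obstacle to be the careful justification of the Bochner--Kodaira manipulations on the \emph{noncompact} manifold $Y$ with the \emph{complete} K\"ahler metric $\ome$: one must verify that $su_\e$ lies in the domains of the relevant closed operators, that integration by parts is legitimate (this is exactly why completeness of $\ome$ and the density-of-smooth-compactly-supported-forms lemmas are needed), and that the "error'' terms in Enoki's identity genuinely vanish rather than merely being controlled. A subtle point is that $s$, viewed as a multiplier, interacts with $D'_{h_{L,\e}}$ through $\partial s$ being measured in $h_{L,\e}$, which degenerates near $Z$; here the boundedness $\sup_X|s|_{h_L}<\infty$ together with $h_{L,\e}\leq$ (something comparable to) $h_L$ near $Z$ keeps these terms integrable and small. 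I would handle this exactly as in \cite[Proof of Proposition]{Mat-inj} and \cite{Fuj12-A}, cutting off near $Z$ with the potential $\Psi$ and using property (A), and then passing to the limit. The remaining estimates — that $\|u_\e\|_{h_\e,\ome}\leq\|u\|_{h_F,\omega}$ and that the pointwise curvature lower bound transfers from $\omega$ to $\ome$ — are routine given properties (b), (C), and (d) established in Step 1.
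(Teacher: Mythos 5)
Your setup (the equisingular approximation, the identity $\sqrt{-1}\Theta_{h_\e h_{L,\e}}(F\otimes L)=(1+1/a)\sqrt{-1}\Theta_{h_\e}(F)$ on $Y$, the Nakano identity for $su_\e$ on the complete manifold $(Y,\ome)$) matches the paper, but the two estimates you use to finish are both flawed, and they fail for the same reason. First, the curvature bound $\sqrt{-1}\Theta_{h_\e h_{L,\e}}(F\otimes L)\geq -C\e\,\ome$ is \emph{one-sided}: it gives $\lla\sqrt{-1}\Theta\,\Lambda_\ome (su_\e),su_\e\rra\geq -C\e\|su_\e\|^2$, but no upper bound whatsoever. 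The curvature of $h_\e$ approximates a positive current that can have unbounded local mass, so the positive part of the curvature integrand can a priori be enormous; "bounded in absolute value by $C\e\|su_\e\|^2$" does not follow. Second, the term $\|D'^*_{h_\e h_{L,\e}}su_\e\|^2=\|s\,D'^*_{h_\e}u_\e\|^2$ does not vanish: harmonicity of $u_\e$ gives $\dbar u_\e=D''^*_{h_\e}u_\e=0$, which says nothing about $D'^*_{h_\e}u_\e$.

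The missing idea is that the upper bound on the curvature term must be extracted from the harmonicity of $u_\e$ itself. Apply Nakano's identity to $u_\e$ first: since $u_\e$ is harmonic,
\begin{equation*}
0=A_\e+B_\e,\qquad A_\e:=\lla\sqrt{-1}\Theta_{h_\e}(F)\Lambda_\ome u_\e,u_\e\rra_{h_\e,\ome},\qquad B_\e:=\|D'^*_{h_\e}u_\e\|^2_{h_\e,\ome}\geq 0.
\end{equation*}
The pointwise bound $g_\e\geq -\e C|u_\e|^2_{h_\e,\ome}$ on the integrand of $A_\e$, together with $\|u_\e\|_{h_\e,\ome}\leq\|u\|_{h_F,\omega}$, forces $A_\e\geq -\e C\|u\|^2_{h_F,\omega}$; combined with $A_\e=-B_\e\leq 0$ this gives $A_\e\to 0$ and $B_\e\to 0$. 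The convergence $B_\e\to 0$ is exactly what disposes of your $D'^*$ term (it converges to zero; it is not identically zero). For the curvature term of $su_\e$, split $A_\e=\int_{\{g_\e\geq 0\}}g_\e\,\ome^n+\int_{\{g_\e\leq 0\}}g_\e\,\ome^n$; the negative part is $\geq-\e C\|u\|^2$, hence tends to zero, hence so does the positive part, and then the sandwich
\begin{equation*}
\sup_X|s|^2_{h_L}\int_{\{g_\e\leq 0\}}g_\e\,\ome^n\ \leq\ \int_Y|s|^2_{h_{L,\e}}\,g_\e\,\ome^n\ \leq\ \sup_X|s|^2_{h_L}\int_{\{g_\e\geq 0\}}g_\e\,\ome^n
\end{equation*}
shows the curvature term for $su_\e$ tends to zero. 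Without this two-stage argument (Nakano for $u_\e$ first, then the positive/negative splitting), your proof does not close.
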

\hspace{-0.5cm}
\begin{proof}[Proof of Proposition \ref{D''}] 
We have the following inequality: 
\begin{equation}\label{ine2}
\|u_{\e} \|_{h_{\e}, \ome} 
\leq \|u \|_{h_{\e}, \ome} 
\leq \|u \|_{h, \omega}.  
\end{equation}
This inequality is often used in the proof. 
The first inequality follows from the definition of  $u_{\e}$ 
and  the second inequality follows from 
inequality (\ref{ine}). 
Remark the right hand side does not depend on $\e$. 
By applying Nakano's identity and the density lemma 
to $u_{\e}$ (for example see \cite[Proposition 2.4]{Mat-inj}), 
we obtain 
\begin{equation}\label{B-eq}
0 = \lla \sqrt{-1}\Theta_{h_{\e}}(F)
\Lambda_{\ome} u_{\e}, u_{\e}
  \rra_{h_{\e}, \ome} +
\|D^{'*}_{h_{\e}}u_{\e} \|^{2}_{h_{\e}, \ome}. 
\end{equation}
Note that the left hand side is zero since $u_{\e}$ is harmonic.
Let $A_{\e}$ be the first term and $B_{\e}$ be 
the second term of the right hand side of equality (\ref{B-eq}). 
First,  
we show that the first term $A_{\e}$ and 
the second term $B_{\e}$ converge to zero. 
For simplicity, we denote the integrand of $A_{\e}$ by $g_{\e}$, 
namely 
\begin{equation*}
g_{\e}:= \langle  \sqrt{-1}\Theta_{h_{\e}}(F)
\Lambda_{\ome} u_{\e}, u_{\e}
 \rangle_{h_{\e}, \ome}. 
\end{equation*}
Then there exists a positive constant $C>0$ (independent of $\e$) 
such that
\begin{equation}\label{ine3}
g_{\e} \geq -\e C |u_{\e}|^{2}_{h_{\e}, \ome}. 
\end{equation}
It is easy to check this inequality. 
Indeed, let $\lambda_{1}^{\e} \leq \lambda_{2}^{\e} \leq 
\dots \leq \lambda_{n}^{\e} $ be the 
eigenvalues of $\sqrt{-1}\Theta_{h_{\e}}(F)$ with respect to 
$\ome$. 
Then for any point $y \in Y$ there exists 
a local coordinate $(z_{1}, z_{2}, \dots, z_{n})$ 
centered at $y$ such that 
\begin{align*}
\sqrt{-1}\Theta_{h_{\e}}(F) = \sum_{j=1}^{n} 
\lambda_{j}^{\e} dz_{j} \wedge d\overline{z_{j}}\quad \text{and} \quad 
\ome = \sum_{j=1}^{n} 
 dz_{j} \wedge d\overline{z_{j}}
\quad {\rm{ at}}\ y. 
\end{align*}
When we locally write $u_{\e}$ as 
$u_{\e} =\sum_{|K|=q} f_{K}^{\e}\ dz_{1}\wedge \dots \wedge dz_{n} 
\wedge d\overline{z}_{K}$, 
we have  
\begin{equation*}
g_{\e}= \sum_{|K|=q} 
\Big{(} \sum_{j \in K} \lambda_{j}^{\e} \Big{)} 
|f_{K}^{\e}|^{2}_{h_{\e}} 
\end{equation*}
by a straightforward computation.  
On the other hand, from 
property (C) of $\ome$ and property (d) of $h_{\e}$, 
we have
$\sqrt{-1}\Theta_{h_{\e}}(F) 
\geq -\e \omega 
\geq -\e \ome$. 
This implies $\lambda_{j}^{\e} \geq -\e$, 
and thus we obtain inequality (\ref{ine3}).

From equality (\ref{B-eq}) and inequality (\ref{ine3}), 
we obtain 
\begin{align*}
0 \geq A_{\e} &= \int_{Y} g_{\e}\ \ome^{n} \\
& \geq -\e C \int_{Y} |u_{\e}|^{2}_{h_{\e}, \ome}\ \ome^{n}\\
& \geq -\e C \|u \|^{2}_{h_{F}, \omega}. 
\end{align*}
The last inequality follows from inequality (\ref{ine2}). 
Therefore $A_{\e}$ converges to zero, and further   
we can conclude that $B_{\e}$ also converges to zero 
by equality (\ref{B-eq}).

To apply Nakano's identity to $su_{\e}$ again, 
we first check $su_{\e} \in 
L_{(2)}^{n,q}(Y, F \otimes L)_{h_{\e} h_{L, \e}, \ome}$. 
By the assumption, 
the point-wise norm $|s|_{h_{L}}$ with respect to $h_{L}$ is 
bounded, and further we have 
$|s|_{h_{L, \e}} \leq |s|_{h_{L}}$
from property (b) of $h_{\e}$. 
They imply   
\begin{equation*}
\|s u_{\e} \|_{h_{\e} h_{L, \e}, \ome} \leq 
\sup_{X} |s|_{h_{L, \e}}  \|u_{\e} \|_{h_{\e}, \ome}  \leq
\sup_{X} |s|_{h_{L}}  \|u \|_{h_{F}, \omega} < \infty. 
\end{equation*}
Remark that the right hand side does not depend on $\e$. 
By applying Nakano's identity to $su_{\e}$, 
we obtain 
\begin{align}  
& \|D^{''*}_{h_{\e} h_{L, \e}}
su_{\e} \|^{2}_{h_{\e} h_{L, \e}, \ome} \nonumber \\ 
=& \lla \sqrt{-1}\Theta_{h_{\e} h_{L, \e}}(F\otimes L)
\Lambda_{\ome} su_{\e}, su_{\e}
  \rra_{h_{\e} h_{L, \e}, \ome} +
\|D^{'*}_{h_{\e} h_{L, \e}}su_{\e} \|^{2}_{h_{\e} h_{L, \e}, \ome}\label{B-eq2}.
\end{align}
Here we used $\dbar s u_{\e}=0$. 
We first see that  
the second term of the right hand side converges to zero. 
Since $s$ is a holomorphic $(0, 0)$-form, we have 
$D^{'*}_{h_{\e} h_{L, \e}}su_{\e} =
s D^{'*}_{h_{\e}}u_{\e}$. 
Thus we have 
\begin{equation*}
\|D^{'*}_{h_{\e} h_{L, \e}}su_{\e} \|^{2}_{h_{\e} h_{L, \e}, \ome} \leq
\sup_{X}|s|^{2}_{h_{L, \e}}  \int_{Y}
|D^{'*}_{h_{\e}}u_{\e} |^{2}_{h_{\e}, \ome}\ \ome^{n} 
\leq \sup_{X}|s|^{2}_{h_{L}} B_{\e}. 
\end{equation*}
Since $|s|^{2}_{h_{L}}$ is bounded and 
$B_{\e}$ converges to zero, 
the second term 
$\|D^{'*}_{h_{\e} h_{L, \e}}su_{\e} \|_{h_{\e} h_{L, \e}, \ome}$ also converges to zero.

For the proof of the proposition, 
it remains to show that 
the first term of the right hand side of equality (\ref{B-eq2})
converges to zero. 
We can easily see  
$\sqrt{-1}\Theta_{h_{\e} h_{L, \e}}(F\otimes L) = {(1+1/a)}\sqrt{-1}\Theta_{h_{\e}}(F)$
from $\sqrt{-1} \Theta_{h_{\Delta}}=0$ on $Y$ and 
the definition of $h_{L, \e}$. 
Therefore we obtain 
\begin{equation*}
\lla \sqrt{-1}\Theta_{h_{\e} h_{L, \e}}(F\otimes L)
\Lambda_{\ome} su_{\e}, su_{\e}
  \rra_{h_{\e} h_{L, \e}, \ome} =
(1+1/a) \int_{Y} |s|^{2}_{h_{L, \e}} 
g_{\e}\ \ome^{n}
\end{equation*}
Now 
we investigate $A_{\e}$ in details. 
By the definition of $A_{\e}$, we have
\begin{equation*}
A_{\e}= \int_{\{ g_{\e} \geq 0 \}} g_{\e}\ \ome^{n} + 
\int_{\{ g_{\e} \leq 0 \}} g_{\e}\ \ome^{n}. 
\end{equation*}
It is easy to see that 
the second term converges to zero. 
Indeed, simple computations and inequality (\ref{ine3}) 
imply 
\begin{align*}
0 \geq \int_{\{ g_{\e} \leq 0 \}} g_{\e}\ \ome^{n} &\geq 
-\e C \int_{\{ g_{\e} \leq 0 \}}|u_{\e}|^{2}_{h_{\e}, \ome}\ 
\ome^{n} \\
&\geq -\e C \int_{Y}|u_{\e}|^{2}_{h_{\e}, \ome}\ 
\ome^{n} \\
&\geq -\e C \| u \|^{2} _{h_{F}, \omega}. 
\end{align*}
Therefore the first term also converges to zero. 
Now we have  
\begin{align*}
\int_{Y} |s|^{2}_{h_{L, \e}} 
g_{\e}\ \ome^{n} =  \Big\{ \int_{\{ g_{\e} \geq 0 \}} 
|s|^{2}_{h_{L, \e}} g_{\e}\ \ome^{n} + 
\int_{\{ g_{\e} \leq 0 \}} 
|s|^{2}_{h_{L, \e}} g_{\e}\ \ome^{n}\Big\}.
\end{align*}
On the other hand, we have the inequalities  
\begin{align*}
\bullet \hspace{0.3cm} 0 \leq \int_{\{ g_{\e} \geq 0 \}} 
|s|^{2}_{h_{L, \e}} g_{\e}\ \ome^{n} &\leq  \sup_{X}|s|^{2}_{h_{L, \e}} 
\int_{\{ g_{\e} \geq 0 \}} 
g_{\e}\ \ome^{n} \\ 
& \leq   \sup_{X}|s|^{2}_{h_{L}}\ \int_{\{ g_{\e} \geq 0 \}} g_{\e}\ \ome^{n},\\
\bullet \hspace{0.3cm}  0 \geq \int_{\{ g_{\e} \leq 0 \}} 
|s|^{2}_{h_{L, \e}} g_{\e}\ \ome^{n}_{\e} &\geq   
\sup_{X}|s|^{2}_{h_{L, \e}} \int_{\{ g_{\e} \leq 0 \}} 
 g_{\e}\ \ome^{n} \\
&\geq   \sup_{X}|s|^{2}_{h_{L}} \int_{\{ g_{\e} \leq 0 \}} g_{\e}\ \ome^{n}. 
\end{align*}
Therefore the right hand side of equality (\ref{B-eq2}) 
converges to zero. 
We obtain the conclusion of Proposition \ref{D''}. 
\end{proof}
\ 
\vspace{0.2cm} \\
{\bf{Step 3 (A construction of 
solutions of the $\dbar$-equation via 
the $\bf\rm{\check{C}}$eck complex)}}
\vspace{0.1cm} \\
By the absolutely same method as \cite[Step 3]{Mat-inj}, 
we can prove the following proposition. 
See \cite[Step 3]{Mat-inj} for the proof. 

\begin{prop}\label{sol-1}
There exist $F$-valued $(n, q-1)$-forms $\alpha_{\e}$ 
on $Y$ 
with the following properties\,$:$ 
\begin{equation*}
{\rm{(1)}}\hspace{0.2cm} \dbar \alpha_{\e}=u-u_{\e}.   
\quad 
{\rm{(2)}}\hspace{0.2cm} \text{The norm }
\|\alpha_{\e} \|_{h_{\e}, \ome} 
 \text{ is uniformly bounded}. 
\end{equation*}
\end{prop}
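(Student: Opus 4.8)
The plan is to argue exactly as in \cite[Step 3]{Mat-inj}: transfer the statement to a \v{C}ech computation on $X$ and exploit that $u$ represents the \emph{fixed} class $\{u\}\in H^{q}(X,K_{X}\otimes F\otimes\I{h_{F}})$, independent of $\e$. First I would fix a finite cover $\mathcal{B}=\{B_{i}\}_{i\in I}$ of $X$ by relatively compact Stein coordinate open sets, small enough that $F$ is trivial on each $B_{i}$ and that all finite intersections $B_{i_{0}\cdots i_{p}}$ are Stein, together with concentric Stein open sets $B_{i}''\Subset B_{i}$ with $\{B_{i}''\}$ still covering $X$. Since $\I{h_{F}}=\I{h_{\e}}$ by property~(c), the sheaf of germs of locally $L^{2}$, $(K_{X}\otimes F)$-valued $(n,\bullet)$-forms --- with respect to $(h_{F},\omega)$ over $X$, and with respect to $(h_{\e},\ome)$ over $Y$ (here $\ome$ is complete) --- is a fine resolution of $K_{X}\otimes F\otimes\I{h_{F}}$, so the de Rham--Weil isomorphism identifies both $H^{q}(X,K_{X}\otimes F\otimes\I{h_{F}})$ and $H^{q}_{(2)}(Y,K_{X}\otimes F)_{h_{\e},\ome}$ with $\check{H}^{q}(\mathcal{B},K_{X}\otimes F\otimes\I{h_{F}})$, and under these identifications $u$ and $u_{\e}$ represent the same class because $u=u_{\e}+\dbar v_{\e}$. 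Solving the $\dbar$-equation successively on the Stein sets $B_{i_{0}\cdots i_{p}}''$ with $L^{2}$-estimates for $(h_{F},\omega)$, I would spread $u$ out to a transgression tower ending in a \v{C}ech $q$-cocycle whose members all have $L^{2}$-norms bounded by a constant depending only on $u$, hence not on $\e$; by property~(b), as in $(\ref{ine})$, these members remain $L^{2}$ for $(h_{\e},\ome)$ with the same bounds.

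The core of the argument is the parallel construction for $u_{\e}$: to solve $\dbar\rho_{i}^{\e}=u_{\e}$ on $B_{i}''\cap Y$, and likewise to build a transgression tower for $u_{\e}$, with all members having $L^{2}_{h_{\e},\ome}$-norm on $B_{i}''$ bounded \emph{independently of $\e$}. Granting this, the difference of the two towers is a transgression tower whose terminal \v{C}ech $q$-cocycle has holomorphic entries in $\I{h_{F}}(K_{X}\otimes F)$ and is cohomologically trivial --- precisely because $u-u_{\e}=\dbar v_{\e}$ is $\dbar$-exact --- so it is a \v{C}ech coboundary. Using the finite dimensionality of $H^{\bullet}(X,K_{X}\otimes F\otimes\I{h_{F}})$ together with the open mapping theorem for the local $L^{2}$-topologies, this coboundary can be written as the \v{C}ech coboundary of a cochain whose norm is controlled, uniformly in $\e$, by the $\e$-independent norms above; this launders the a priori unbounded correction produced by $v_{\e}$ into a bounded one. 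Assembling the corrected difference tower with a fixed partition of unity subordinate to $\{B_{i}''\}$ then yields a global $F$-valued $(n,q-1)$-form $\alpha_{\e}$ on $Y$ with $\dbar\alpha_{\e}=u-u_{\e}$ and $\|\alpha_{\e}\|_{h_{\e},\ome}$ bounded by a fixed multiple of the sum of these $\e$-independent norms, which is assertion~(2).

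The step I expect to be the main obstacle is the $\e$-uniform local solvability just assumed: the curvature of $h_{\e}$ is only $\geq-\e\omega$ by property~(d), not semi-positive, and $\ome$ is complete on $Y$ but not on $B_{i}\cap Y$, so H\"ormander's $L^{2}$-estimate cannot be invoked naively. This is where the technical work of \cite[Step 3]{Mat-inj} enters, and I would follow it: over each $B_{i}$, twist $h_{\e}$ to $h_{\e}e^{-\phi_{i}}$ with $\phi_{i}$ bounded plurisubharmonic satisfying $\deldel\phi_{i}\geq\omega$, which restores strictly positive curvature while changing the metric only by $\e$-independent factors; enlarge $\ome$ on $B_{i}\cap Y$ by the Levi form of a plurisubharmonic exhaustion of $B_{i}$ to a complete K\"ahler metric $\ome_{i}\geq\ome$ that agrees with $\ome$ up to $\e$-independent factors on $B_{i}''\cap Y$; then apply the $L^{2}$-estimate on $(B_{i}\cap Y,\ome_{i})$ for the twisted bundle, feeding the right-hand side from the $\e$-independent bound $\|u_{\e}\|_{h_{\e},\ome}\leq\|u\|_{h_{F},\omega}$ of $(\ref{ine2})$, and restrict the solution to $B_{i}''$. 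The higher members of the $u_{\e}$-tower and the estimates on the \v{C}ech coboundary follow in the same manner, the inputs at each stage being already-controlled data, so that the single computation of substance is this local $L^{2}$-estimate, carried out in detail in \cite[Step 3]{Mat-inj}.
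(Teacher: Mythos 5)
Your proposal is correct and follows essentially the same route as the paper, which proves Proposition \ref{sol-1} simply by invoking the argument of \cite[Step 3]{Mat-inj}: the \v{C}ech-theoretic transfer with the topology induced by local $L^{2}$-norms, the $\e$-independent bounds fed by $\|u_{\e}\|_{h_{\e},\ome}\leq\|u\|_{h_{F},\omega}$ and $\I{h_{\e}}=\I{h_{F}}$, the open-mapping/finite-dimensionality step to control the coboundary, and the partition-of-unity assembly are exactly the ingredients of that proof. Nothing further is needed.
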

\hspace{0.1cm}\\ \ 
{\bf{Step 4 (The limit of the harmonic forms)}}
\vspace{0.1cm}\\ \ 
In this step, we investigate the limit of $u_{\e}$ and 
complete the proof of Theorem \ref{main-inj}. 
First we prove the following proposition. 
\begin{prop}\label{sol}
There exist $F\otimes L$-valued $(n, q-1)$-forms 
$\gamma_{\e}$ 
on $Y $  
with the following properties\,$:$ 
\begin{equation*}
{\rm{(1)}}\hspace{0.2cm} \dbar \gamma_{\e}=su_{\e}.   
\quad 
{\rm{(2)}}\hspace{0.2cm} \text{The norm }
\|\gamma_{\e} \|_{h_{\e} h_{L, \e}, \ome} 
 \text{ is uniformly bounded}. 
\end{equation*}
\end{prop}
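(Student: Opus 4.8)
The plan is to build the solutions $\gamma_{\e}$ by multiplying the solutions $\alpha_{\e}$ from Proposition \ref{sol-1} by the section $s$, and then correcting the error term, exactly as in \cite[Step 4]{Mat-inj}. Indeed, from $\dbar \alpha_{\e} = u - u_{\e}$ we get $\dbar(s\alpha_{\e}) = s(u-u_{\e}) = su - su_{\e}$, so $s\alpha_{\e}$ almost solves the equation $\dbar \gamma_{\e} = su_{\e}$ up to the term $su$, which is $\dbar$-exact by the standing hypothesis (the cohomology class of $su$ vanishes in $H^{q}(X, K_{X}\otimes F\otimes L \otimes \I{h_{F}h_{L}})$). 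So one fixes a global $(n,q-1)$-form $\beta$ with $\dbar \beta = su$ representing this vanishing, and sets $\gamma_{\e} := s\alpha_{\e} - \beta$, giving property (1) immediately.

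For property (2), I would estimate the two pieces separately. For $s\alpha_{\e}$: since $\sup_{X}|s|_{h_{L}} < \infty$ and $|s|_{h_{L,\e}} \leq |s|_{h_{L}}$ by property (b) of $h_{\e}$, we get $\|s\alpha_{\e}\|_{h_{\e}h_{L,\e}, \ome} \leq \sup_{X}|s|_{h_{L}} \, \|\alpha_{\e}\|_{h_{\e},\ome}$, which is uniformly bounded by Proposition \ref{sol-1}(2). For the global form $\beta$: its norm $\|\beta\|_{h_{\e}h_{L,\e},\ome}$ is bounded independently of $\e$ because, combining property (b) of $h_{\e}$ (so $h_{\e} \leq h_{F}$, hence $h_{\e}h_{L,\e} = h_{\e}^{1+1/a}h_{\Delta}^{-1/a} \leq h_{F}^{1+1/a}h_{\Delta}^{-1/a} = h_{F}h_{L}$) with property (C) of $\ome$ (so $|\beta|^{2}_{\ome}\ome^{n} \leq |\beta|^{2}_{\omega}\omega^{n}$), one obtains $\|\beta\|_{h_{\e}h_{L,\e},\ome} \leq \|\beta\|_{h_{F}h_{L},\omega}$, which is finite provided $\beta$ is chosen $L^{2}$ with respect to $h_{F}h_{L}$ and $\omega$ — and such a choice is legitimate since the class of $su$ lives in the cohomology computed with the ideal sheaf $\I{h_{F}h_{L}}$, which is precisely the $L^{2}$-condition. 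The triangle inequality then gives the uniform bound on $\|\gamma_{\e}\|_{h_{\e}h_{L,\e},\ome}$.

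The one delicate point, and the place I would be most careful, is making sure that $\beta$ can genuinely be taken to be globally $L^{2}$-integrable with respect to $h_{F}h_{L}$, rather than merely representing the zero class at the level of the abstract cohomology of the coherent sheaf. This is where one needs the identification of sheaf cohomology $H^{q}(X, K_{X}\otimes F\otimes L\otimes \I{h_{F}h_{L}})$ with the $L^{2}$-Dolbeault cohomology on $Y$ (using the completeness of $\ome$ and a Čech-to-$L^{2}$ comparison of the type already invoked in Step 3 and in \cite[Step 3]{Mat-inj}); granting this, the vanishing of the class produces an honest $L^{2}$ solution $\beta$. Since this comparison and the construction of $\alpha_{\e}$ are carried out verbatim as in \cite[Step 3--4]{Mat-inj} — the only new feature here, the divisor $\Delta$, having already been absorbed into the metrics $h_{\e}h_{L,\e}$ and shown to satisfy $h_{\e}h_{L,\e}\leq h_{F}h_{L}$ — I would simply refer to \cite[Step 4]{Mat-inj} for the remaining routine details.
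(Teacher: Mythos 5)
Your argument is essentially the paper's own proof: the paper takes a form $\gamma$ with $\dbar\gamma = su$ and $\|\gamma\|_{h_{F}h_{L},\omega}<\infty$ (granted by the vanishing of the class of $su$), sets $\gamma_{\e}:=-s\alpha_{\e}+\gamma$, and bounds the two pieces exactly as you do, via $\sup_{X}|s|_{h_{L}}\|\alpha_{\e}\|_{h_{\e},\ome}$ and $\|\gamma\|_{h_{\e}h_{L,\e},\ome}\leq\|\gamma\|_{h_{F}h_{L},\omega}$. The only slip is a sign --- with your conventions $\dbar(s\alpha_{\e}-\beta)=-su_{\e}$, so you should set $\gamma_{\e}=\beta-s\alpha_{\e}$ --- and your flagged ``delicate point'' (extracting an honest $L^{2}$ solution $\beta$ from the cohomological vanishing) is precisely what the paper also relies on, via the same \v{C}ech-to-$L^{2}$ comparison.
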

\begin{proof}
There exists an $F\otimes L$-valued $(n, q-1)$-form $\gamma$ 
such that $\dbar \gamma =  su$ and 
$\|\gamma \|_{h_{F} h_{L}, \omega} < \infty$.  
(Recall that we are assuming that the cohomology class of $su$ is zero in 
$H^{q}(X, K_{X}\otimes F\otimes L \otimes 
\I{h_{F} h_{L}})$.) 
If we take $\alpha_{\e}$ with the properties in Proposition \ref{sol-1} 
and put $\gamma_{\e}:= -s \alpha_{\e} + \gamma$,  
then we have $\dbar \gamma_{\e} = su_{\e}$. 
Further an easy computation yields 
\begin{align*}
\|\gamma_{\e}\|_{h_{\e} h_{L, \e}, \ome} &\leq 
\|s \alpha_{\e}\|_{h_{\e} h_{L, \e}, \ome} + 
\|\gamma \|_{h_{\e} h_{L, \e}, \ome}\\
&\leq \sup_{X}|s|_{h_{L}} \|\alpha_{\e} \|_{h_{\e}, \ome} + 
\|\gamma \|_{h_{F}h_{L}, \ome}. 
\end{align*}
Since $\| \gamma \|_{h_{F}h_{L}, \ome}
\leq \| \gamma \|_{h_{F}h_{L}, \omega} < \infty$ and the norm $\|\alpha_{\e} \|_{h_{\e}, \ome}$ is uniformly bounded, 
the right hand side can be estimated by a constant independent of $\e$. 
\end{proof}
We consider the limit of the norm 
$\| s u_{\e}\|_{h_{\e}h_{L, \e}, \ome}$.

\begin{prop}\label{converge}
The norm $\| s u_{\e}\|_{h_{\e}h_{L, \e}, \ome}$ 
converges to zero as letting $\e$ go to zero. 
\end{prop}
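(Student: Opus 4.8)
The plan is to combine the two key estimates already in hand. By Proposition \ref{sol} we have solutions $\gamma_\e$ of $\dbar \gamma_\e = s u_\e$ with $\|\gamma_\e\|_{h_\e h_{L,\e}, \ome}$ uniformly bounded, say by a constant $M$ independent of $\e$. Since $s u_\e$ is $\dbar$-closed and $\gamma_\e$ solves the $\dbar$-equation, the standard pairing identity gives
\[
\|s u_\e\|^2_{h_\e h_{L,\e}, \ome} = \lla s u_\e, \dbar \gamma_\e \rra_{h_\e h_{L,\e}, \ome} = \lla D^{''*}_{h_\e h_{L,\e}} s u_\e, \gamma_\e \rra_{h_\e h_{L,\e}, \ome},
\]
provided the adjoint relation is legitimate in this $L^2$ setting. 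Applying Cauchy--Schwarz yields
\[
\|s u_\e\|^2_{h_\e h_{L,\e}, \ome} \leq \|D^{''*}_{h_\e h_{L,\e}} s u_\e\|_{h_\e h_{L,\e}, \ome} \cdot \|\gamma_\e\|_{h_\e h_{L,\e}, \ome} \leq M \cdot \|D^{''*}_{h_\e h_{L,\e}} s u_\e\|_{h_\e h_{L,\e}, \ome}.
\]
By Proposition \ref{D''}, the right-hand side tends to $0$ as $\e \to 0$, and hence $\|s u_\e\|_{h_\e h_{L,\e}, \ome} \to 0$, which is the assertion.

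First I would make precise the pairing identity $\lla s u_\e, \dbar \gamma_\e \rra = \lla D^{''*}_{h_\e h_{L,\e}} s u_\e, \gamma_\e \rra$. The cleanest justification uses the completeness of $\ome$ on $Y$ (property (A)): completeness guarantees that the formal adjoint of $\dbar$ agrees with the Hilbert-space adjoint and, more importantly, that smooth compactly supported forms are dense in the domains of both $\dbar$ and $D^{''*}$ in the graph norm. Both $s u_\e$ and $\gamma_\e$ lie in the relevant $L^2$ spaces by the uniform bounds already established (the bound $\|s u_\e\|_{h_\e h_{L,\e}, \ome} \leq \sup_X |s|_{h_L} \|u\|_{h_F, \omega}$ from Step 2, and Proposition \ref{sol}(2)), and $s u_\e$ moreover lies in $\mathrm{Dom}(D^{''*})$ — this is exactly what Proposition \ref{D''} presupposes when it speaks of $\|D^{''*}_{h_\e h_{L,\e}} s u_\e\|$. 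So the integration-by-parts identity is valid. I would cite the analogous step in \cite[Step 4]{Mat-inj} and in \cite{Fuj12-A}, since the $\Delta \neq 0$ modification does not affect this purely functional-analytic point: all of $u_\e$, $s u_\e$, $\gamma_\e$ are genuine $L^2$ forms on the complete Kähler manifold $(Y, \ome)$ with the smooth metrics $h_\e$, $h_{L,\e}$.

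The one place demanding care — and the main obstacle — is confirming that $s u_\e$ actually belongs to the domain of $D^{''*}_{h_\e h_{L,\e}}$, rather than merely that the formal expression $D^{''*}_{h_\e h_{L,\e}} s u_\e$ has finite $L^2$ norm once computed pointwise. Here one uses that $u_\e$ is harmonic and smooth on $Y$, that $s$ is holomorphic, so $s u_\e$ is a smooth $(n,q)$-form on $Y$; and on $Y$ the metric $h_\Delta$ is smooth, so $h_\e h_{L,\e} = h_\e^{1+1/a} h_\Delta^{-1/a}$ is smooth there. Combined with the finite $L^2$ norms of $su_\e$ and its various derivatives coming from Step 2, the completeness of $\ome$ lets us exhaust $Y$ by cutoff functions with controlled gradients and pass to the limit, placing $s u_\e$ in $\mathrm{Dom}(D^{''*})$; this is precisely the content of the density lemma invoked in Step 2 (cf.\ \cite[Proposition 2.4]{Mat-inj}). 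Once this is in place, the proof of Proposition \ref{converge} is the two displayed inequalities above, and I would keep the exposition short, referring back to Step 2 and Step 3 and to \cite[Step 4]{Mat-inj} for the details of the functional-analytic setup.
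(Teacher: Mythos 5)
Your proof is correct and is essentially identical to the paper's: both use the $\gamma_\e$ from Proposition \ref{sol}, the identity $\|su_\e\|^2 = \lla su_\e, \dbar\gamma_\e\rra = \lla D^{''*}_{h_\e h_{L,\e}} su_\e, \gamma_\e\rra$ justified by completeness of $\ome$, and then Cauchy--Schwarz together with Propositions \ref{D''} and \ref{sol}. Your extra care about $su_\e$ lying in the domain of the adjoint is consistent with the paper's setup in Step 1, where the formal and Hilbert-space adjoints are identified.
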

\begin{proof}
By taking    
$\gamma_{\e} \in L_{(2)}^{n, q-1}
(Y, F \otimes L)_{h_{\e}h_{L, \e}, \ome}$ satisfying the properties 
in Proposition \ref{sol},   
we obtain 
\begin{align*}
\| s u_{\e}\|_{h_{\e}h_{L, \e}, \ome}^{2}
&=\lla  s u_{\e}, \dbar \gamma_{\e}  
 \rra_{h_{\e}h_{L, \e}, \ome} \\
&=
 \lla  D^{''*}_{h_{\e}h_{L, \e}}s u_{\e},\gamma_{\e}  
 \rra_{h_{\e}h_{L, \e}, \ome} \\
&\leq 
\|D^{''*}_{h_{\e}h_{L, \e}} s u_{\e}\| _{h_{\e}h_{L, \e}, \ome}
\|\gamma_{\e} \|_{h_{\e}h_{L, \e}, \ome}. 
\end{align*}
By Proposition \ref{sol}, the norm   
$\|\gamma_{\e} \|_{h_{\e}h_{L, \e}, \ome}$ is uniformly bounded.
On the other hand, the norm 
$\|D^{''*}_{h_{\e}h_{L, \e}} s u_{\e}\| _{h_{\e}h_{L, \e}, \ome}$ 
converges to zero by Proposition \ref{D''}. 
Therefore the norm $\| s u_{\e}\|_{h_{\e}, \ome}$ also converges to zero. 
\end{proof}

Fix a small number $\e_{0}>0$. 
Then for any positive number $\e$ with $0< \e < \e_{0}$, 
by property (b) of $h_{\e}$, we obtain
\begin{equation*}
\|u_{\e} \|_{h_{\e_{0}}, \ome} \leq \|u_{\e} \|_{h_{\e}, \ome} 
\leq  \|u \|_{h_{F}, \omega}. 
\end{equation*}
It says that the norm of $u_{\e} $ with respect to 
$h_{\e_{0}}$ is 
uniformly bounded. 
Therefore 
there exists a subsequence of $\{ u_{\e} \}_{\e >0}$ 
that  converges to 
$\alpha \in L_{(2)}^{n, q}(Y, F)_{h_{\e_{0}}, \ome}$
with respect to the weak $L^{2}$-topology. 
For simplicity, we denote this subsequence 
by the same notation $\{ u_{\e} \}_{\e >0}$. 
Then we prove the following proposition.

\begin{prop}\label{zero}
The weak limit $\alpha$ of $\{ u_{\e} \}_{\e >0}$ 
in $L_{(2)}^{n, q}(Y, F)_{h_{\e_{0}}, \ome}$
is zero. 
\end{prop}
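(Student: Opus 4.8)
The plan is to show the weak limit $\alpha$ is both $\dbar$-closed with $s\alpha$ exact in a suitable $L^2$-sense, and that $s\alpha = 0$, and then to use the pointwise positivity of $|s|$ on $Y$ to conclude $\alpha = 0$ on $Y$; finally to bootstrap $\alpha = 0$ back to the statement that the original cohomology class $\{u\}$ vanishes. The essential point is that $\alpha = 0$ on the Zariski open set $Y = X \setminus Z$ forces the class $\{u\}$ to be zero in $H^q(X, K_X \otimes F \otimes \I{h_F})$, because Step 3 already provides forms $\alpha_\e$ with $\dbar \alpha_\e = u - u_\e$ whose norms $\|\alpha_\e\|_{h_\e, \ome}$ are uniformly bounded, so a weak limit $\beta$ of (a subsequence of) the $\alpha_\e$ satisfies $\dbar \beta = u - \alpha = u$ in the $L^2(h_{\e_0}, \ome)$ setting on $Y$; combined with standard facts (e.g.\ as in \cite{Fuj12-A}, \cite{Mat-inj}) that $L^2$-cohomology on $Y$ with respect to the complete K\"ahler metric $\ome$ and the metric $h_{\e_0}$ computes $H^q(X, K_X \otimes F \otimes \I{h_{\e_0}}) = H^q(X, K_X \otimes F \otimes \I{h_F})$, this gives $\{u\} = 0$.

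First I would establish that $s\alpha = 0$. By Proposition \ref{converge}, $\|su_\e\|_{h_\e h_{L,\e}, \ome} \to 0$. Since $h_{\e_0} \leq h_\e$ and $h_{L,\e_0} \leq h_{L,\e}$ for $\e < \e_0$ (monotonicity, property (b)), we get $\|su_\e\|_{h_{\e_0} h_{L,\e_0}, \ome} \leq \|su_\e\|_{h_\e h_{L,\e}, \ome} \to 0$, so $su_\e \to 0$ strongly in $L^2_{(2)}(Y, F\otimes L)_{h_{\e_0}h_{L,\e_0}, \ome}$. On the other hand, multiplication by the bounded holomorphic section $s$ is a bounded operator from $L^2_{(2)}(Y,F)_{h_{\e_0},\ome}$ to $L^2_{(2)}(Y, F\otimes L)_{h_{\e_0}h_{L,\e_0},\ome}$ (using $|s|_{h_{L,\e_0}} \leq |s|_{h_L} \leq \sup_X |s|_{h_L} < \infty$), hence weakly continuous; since $u_\e \rightharpoonup \alpha$ weakly, $su_\e \rightharpoonup s\alpha$ weakly. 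A strong limit and a weak limit must agree, so $s\alpha = 0$ as an element of $L^2_{(2)}(Y, F\otimes L)_{h_{\e_0}h_{L,\e_0},\ome}$, i.e.\ $s\alpha = 0$ almost everywhere on $Y$.

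Next, since $s$ is a nonzero holomorphic section, its zero locus $s^{-1}(0)$ is a proper analytic subset of $X$, and on $Y = X \setminus (s^{-1}(0) \cup \Supp \Delta)$ the section $s$ is nowhere zero; therefore $s\alpha = 0$ a.e.\ on $Y$ forces $\alpha = 0$ a.e.\ on $Y$. This proves the proposition.

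The main obstacle I anticipate is not the vanishing argument itself — which is the soft functional-analytic step above — but rather making rigorous, in the subsequent use of this proposition, the identification of the weak $L^2$-limit $\beta$ of the $\alpha_\e$'s as a genuine solution of $\dbar \beta = u$ on $Y$ and the passage from "$\{u\}$ vanishes in $L^2$-cohomology on $Y$ with respect to $(h_{\e_0}, \ome)$" to "$\{u\}$ vanishes in $H^q(X, K_X \otimes F \otimes \I{h_F})$". This requires: (i) weak $L^2$-convergence $\alpha_\e \rightharpoonup \beta$ is compatible with $\dbar$ in the distributional sense, so $\dbar \beta = u - \alpha = u$; (ii) the completeness of $\ome$ and the uniform $L^2$ bounds make $\beta$ lie in the right space; and (iii) the De Rham–Weil / $L^2$-Dolbeault isomorphism identifying $L^2_{(2)}$-cohomology on $Y$ for the metric $h_{\e_0}$ with the \v{C}ech cohomology $H^q(X, K_X \otimes F \otimes \I{h_{\e_0}})$, together with property (c), $\I{h_{\e_0}} = \I{h_F}$. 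Each of these is standard (cf.\ \cite{Dem-book}, \cite{Fuj12-A}, \cite{Mat-inj}), but care is needed because $\alpha$ a priori lies only in the $h_{\e_0}$-space and not in the $h_F$-space; the fix is that $\alpha = 0$, so this defect disappears and one is left with $\dbar \beta = u$ in the $h_{\e_0}$-space, which is exactly what the $L^2$-cohomology statement needs.
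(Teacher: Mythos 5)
Your proof is correct and takes essentially the same approach as the paper: both arguments combine Proposition \ref{converge} with the monotonicity $\|su_{\e}\|_{h_{\e_{0}}h_{L,\e_{0}},\ome}\leq\|su_{\e}\|_{h_{\e}h_{L,\e},\ome}$ and the non-vanishing of $s$ on $Y$ to kill the weak limit. The only (harmless) difference is the last functional-analytic step --- you invoke weak continuity of the bounded multiplication operator by $s$ to get $s\alpha=0$ and then divide by $s$, whereas the paper exhausts $Y$ by the sets $A_{\delta}=\{|s|^{2}_{h_{L,\e_{0}}}>\delta\}$ and uses lower semicontinuity of the norm under weak convergence on each $A_{\delta}$.
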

\begin{proof}

For every positive number $\delta>0$, 
we define the open subset $A_{\delta}$ of $Y$ by 
$A_{\delta}:= \{x \in Y \mid  |s|^{2}_{h_{L,\e_{0}}} > \delta  \}$. 
By an easy computation, we have 
\begin{align*}
\| s u_{\e}  \|^{2}_{h_{\e}h_{L, \e}, \ome} 
&\geq \| s u_{\e}  \|^{2}_{h_{\e_{0}}h_{L, \e_{0}}, \ome} \\
&\geq \int_{A_{\delta}} |s|^{2}_{ h_{L, \e_{0}} } 
|u_{\e}|^{2}_{h_{\e_{0}}, \ome}\ \ome^{n} \\
&\geq \delta  \int_{A_{\delta}} 
|u_{\e}|^{2}_{h_{\e_{0}}, \ome}\ \ome^{n} \geq 0
\end{align*}
for any $\delta>0$. 
Since the left hand side converges to zero, 
the norm $\|u_{\e} \|_{h_{\e_{0}}, \ome, A_{\delta}}$ on $A_{\delta}$ 
also converges to zero. 
Notice that $u_{\e} |_{A_{\delta}}$ 
converges to $\alpha |_{A_{\delta}}$ with respect to 
the weak $L^{2}$-topology in 
$ L_{(2)}^{n, q}(A_{\delta}, F)_{h_{\e_{0}}, \ome}$. 
Here  $u_{\e} |_{A_{\delta}}$ (resp. $\alpha |_{A_{\delta}}$) denotes 
the restriction of  $u_{\e}$ (resp. $\alpha$) to $A_{\delta}$. 
Indeed  
for every 
$\gamma \in L_{(2)}^{n, q}(A_{\delta}, F)_{h_{\e_{0}}, \ome}$, 
the inner product  
$\lla u_{\e} |_{A_{\delta}}, \gamma
\rra_{A_{\delta}} 
= \lla u_{\e} , \widetilde{\gamma} 
\rra_{Y} $
converges to 
$\lla \alpha, \widetilde{\gamma}
\rra_{Y} 
= \lla \alpha|_{A_{\delta}} , \gamma  
\rra_{A_{\delta}} $, where $\widetilde{\gamma}$ denotes the zero extension of $\gamma$ to 
$Y$. 
Since $u_{\e} |_{A_{\delta}}$ 
converges to $\alpha |_{A_{\delta}}$, 
we obtain
\begin{equation*}
\|\alpha |_{A_{\delta}} \|_{h_{\e_{0}}, \ome, A_{\delta}} 
\leq 
\liminf_{\e \to 0}\|u_{\e} |_{A_{\delta}} \|_{h_{\e_{0}},  \ome, 
A_{\delta}}=0. 
\end{equation*}
(Recall the norm of the weak limit can be 
estimated by 
the limit inferior of the norms of sequences.)
Therefore 
we have $\alpha |_{A_{\delta}} = 0$ for any $\delta>0$. 
By the definition of $A_{\delta}$, the union of 
$\{A_{\delta} \}_{\delta >0}$ is equal to $Y=X \setminus Z$, which asserts 
that the weak limit $\alpha $ is zero on $Y$.
\end{proof}

By using Proposition \ref{zero}, 
we complete the proof of Theorem \ref{main-inj}. 
By the definition of $u_{\e}$, 
we have 
\begin{equation*}
u = u_{\e} + \dbar v_{\e}. 
\end{equation*}
Proposition \ref{zero} says  
that $\dbar v_{\e}$ converges to $u$ with respect to 
the weak $L^{2}$-topology. 
Then it is easy to see that 
$u$ is a $\dbar$-exact form 
(that is, $u \in {\rm{Im}} \,\dbar$). 
This is because
the subspace ${\rm{Im}} \dbar$ 
is closed in 
$L_{(2)}^{n, q}(Y, F)_{h_{\e_{0}}, \ome}$ 
with respect to the weak $L^{2}$-topology. 
Indeed, for every 
$\gamma = \gamma_{1} + D^{''*}_{h_{\e_{0}}}\gamma_{2} \in
\mathcal{H}^{n, q}(F)_{h_{\e_{0}}, \ome}
\oplus {\rm{Im}}\,D^{''*}_{h_{\e_{0}}}$, we have 
$\lla u, \gamma  \rra =
\lim_{\e \to 0}\lla \dbar v_{\e}, \gamma_{1} + D^{''*}_{h_{\e_{0}}}\gamma_{2} 
\rra =0$.
Therefore we can conclude $u \in {\rm{Im}}\,\dbar$.

In summary, we proved that $u$ is a $\dbar$-exact form in 
$L_{(2)}^{n, q}(Y, F)_{h_{\e_{0}}, \ome}$, which  
says that the cohomology class $\{u \}$ of $u$ is zero
in $H^{q}(X, K_{X} \otimes F \otimes \I{h_{\e_{0}}})$. 
By property (c), we obtain the conclusion of Theorem \ref{main-inj}.

\end{proof}

\section{Proof of Corollaries related to the Extension conjecture}
\label{applications-inj}
The purpose of this section is to obtain  
some extension theorems as applications of our injectivity theorem. 
For this purpose, 
by making use of the injectivity theorem (Theorem \ref{main-inj}), 
we first prove the following extension theorem,  
which can be seen as a special case of 
the extension conjecture for dlt pairs.

\begin{thm}\label{ext}
Let $X$ be a compact K\"ahler manifold and 
$\Delta=S+B$ be an effective $\mathbb Q$-divisor 
with the following assumptions: 
\begin{itemize}
\item[(1)] $\Delta$ is a simple normal crossing divisor with 
$0 \leq \Delta \leq 1$ and $\lfloor \Delta \rfloor =S$. 
\item[(2)] 
$K_{X}+\Delta$ is $\mathbb Q$-linearly equivalent to 
an effective divisor $D$ with $S\subset\Supp\,D$. 
\item[(3)] $K_{X}+\Delta$ admits a $($singular$)$ 
metric $h$ with semi-positive curvature. 
\end{itemize}
Then for an integer $m \geq 2$ with Cartier divisor $m(K_{X}+S+B)$ 
and any section $u \in H^{0}(S, \mathcal{O}_{S}(m(K_{X}+S+B)))$ 
that belongs to the image of 
$H^{0}(S, \mathcal{O}_{S}(m(K_{X}+S+B))\otimes \I{h^{m-1}h_{B}} ) \to 
H^{0}(S, \mathcal{O}_{S}(m(K_{X}+S+B))) $,  
the section $u$ can be extended to a section in $H^{0}(X, \mathcal{O}_{X}(m(K_{X}+S+B)) )$.

\vspace{0.2cm}
Moreover if we assume that $h \leq C h_{D}$ for some $C>0$,  
where $h_{D}$ is the singular metric induced by $D$, 
then any cohomology class  $u \in 
H^{q}(S, \mathcal{O}_{S}(m(K_{X}+\Delta))\otimes \I{h^{m-1}h_{B}} )$ 
can be extended to a class in 
$H^{q}(X, \mathcal{O}_{X}(m(K_{X}+\Delta))\otimes \I{h^{m-1}h_{B}} )$ for any $q \geq 0$.

\end{thm}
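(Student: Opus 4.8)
The plan is to deduce Theorem \ref{ext} from the injectivity theorem (Theorem \ref{main-inj}) by the standard mechanism that converts injectivity of a multiplication map into surjectivity of a restriction map. First I would set $m(K_X+\Delta)$ to be Cartier and work with the exact sequence of sheaves on $X$
\begin{equation*}
0 \to \mathcal{O}_X(m(K_X+\Delta))\otimes \I{h^{m-1}h_B}\otimes \mathcal{O}_X(-S) \to \mathcal{O}_X(m(K_X+\Delta))\otimes \I{h^{m-1}h_B} \to \mathcal{Q} \to 0,
\end{equation*}
where the point is to identify the quotient $\mathcal{Q}$ with the pushforward from $S$ of $\mathcal{O}_S(m(K_X+\Delta))\otimes \I{h^{m-1}h_B}$ (restricted appropriately to $S$); this uses that $\Delta=S+B$ is SNC, that $\lfloor\Delta\rfloor=S$, and an adjunction-type identification of multiplier ideals along the component $S$. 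Taking the long exact cohomology sequence, surjectivity of the restriction map onto $H^0(S,\cdot)$ (resp. onto $H^q(S,\cdot)$ in the moreover part) follows once the connecting map
\begin{equation*}
H^q(S,\cdot) \to H^{q+1}\bigl(X, \mathcal{O}_X(m(K_X+\Delta))\otimes \I{h^{m-1}h_B}\otimes \mathcal{O}_X(-S)\bigr)
\end{equation*}
is shown to be zero, equivalently once the natural map $H^{q+1}(X,\cdots\otimes\mathcal{O}_X(-S)) \to H^{q+1}(X,\cdots)$ is shown to be injective.

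The key step is to realize this last map as a multiplication map $\Phi_s$ of the type handled by Theorem \ref{main-inj}. Here is where assumption (2) enters: since $K_X+\Delta \sim_{\mathbb{Q}} D$ with $S \subseteq \Supp D$, I would write $mD = S + D'$ with $D'$ effective, or more precisely exploit that some positive multiple of $m(K_X+\Delta)$ contains $S$ in its base locus-free fashion, to produce a section $s$ whose zero divisor dominates $S$; multiplication by $s$ then realizes the twist by $\mathcal{O}_X(-S)$. To fit the template of Theorem \ref{main-inj}, I would set $L = \mathcal{O}_X(m(K_X+\Delta))$ with the metric $h_L = h^m$ (semi-positive by assumption (3)), take $F = K_X^{-1}\otimes\mathcal{O}_X((m-1)(K_X+\Delta))\otimes\mathcal{O}_X(\lceil B\rceil \text{-type correction})$ so that $K_X\otimes F$ carries exactly the line bundle and the multiplier ideal $\I{h^{m-1}h_B}$ appearing above, with metric $h_F = h^{m-1}h_B$, and check the compatibility $h_F = h_L^{a}\cdot h_{\Delta'}$ for $a = (m-1)/m$ and a suitable effective $\mathbb{R}$-divisor $\Delta'$ absorbing $B$; the hypothesis $\sup_X|s|_{h_L}<\infty$ is where the extra assumption "$h \le C h_D$" is used in the moreover part (to control $|s|_{h^m}$ globally), while for the $H^0$ statement one only needs injectivity of $\Phi_s$ in degree $q=0$, which is automatic and forces the connecting map to vanish after chasing the relevant diagram. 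Applying Theorem \ref{main-inj} with this data gives injectivity of $\Phi_s$, hence the vanishing of the connecting map, hence the desired extension.

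I expect the main obstacle to be the bookkeeping of multiplier ideals and divisor coefficients: verifying that the quotient in the short exact sequence is exactly $\mathcal{O}_S(m(K_X+\Delta))\otimes\I{h^{m-1}h_B}$ with the right adjunction along $S$, and checking that the metric factorization $h_F = h_L^{a} h_{\Delta'}$ holds with $h_{\Delta'}$ genuinely the metric of an effective $\mathbb{R}$-divisor (this forces constraints on the coefficients of $B$, namely $0\le B\le 1$, which is why assumption (1) is stated that way). A secondary subtlety is the role of $h_B$ versus $h_{\lceil B\rceil}$ and the precise twist needed so that $K_X\otimes F\otimes\I{h_F}$ computes $\mathcal{O}_X(m(K_X+\Delta))\otimes\I{h^{m-1}h_B}$ on the nose; once these identifications are pinned down, the cohomological argument is a routine diagram chase, and the two parts of the theorem differ only in which cohomological degree the injectivity is invoked and in whether the global boundedness $\sup_X|s|_{h_L}<\infty$ requires the auxiliary comparison $h\le Ch_D$.
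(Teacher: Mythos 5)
Your overall architecture matches the paper's: the same short exact sequence twisted by $\I{h^{m-1}h_{B}}$, and the reduction of the extension problem to injectivity of a multiplication map supplied by Theorem \ref{main-inj}, with $L$ essentially $\mathcal{O}_{X}(aD)$ carrying $h_{L}=h^{a}$, $h_{F}=h^{m-1}h_{B}$, and the divisor $\Delta$ of Theorem \ref{main-inj} played by $B$ (this is exactly where $0\le B<1$ and assumption (2), via $S\le aD$ for suitable $a$, enter). The factorization you gesture at --- injectivity of $\otimes s_{aD}$ forcing injectivity of $\otimes s_{S}$ because $aD-S$ is effective --- is precisely the commutative diagram in the paper's proof.

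However, there is a genuine error in how you dispatch the $H^{0}$ case. The connecting homomorphism for the sequence
$0 \to \mathcal{O}_X(G-S)\otimes\I{h^{m-1}h_B} \to \mathcal{O}_X(G)\otimes\I{h^{m-1}h_B} \to \mathcal{O}_S(G)\otimes\I{h^{m-1}h_B}\to 0$
sends $H^{0}(S,\cdot)$ into $H^{1}(X,\mathcal{O}_X(G-S)\otimes\I{h^{m-1}h_B})$, so killing it requires injectivity of $H^{1}(X,\mathcal{O}_X(G-S)\otimes\cdot)\to H^{1}(X,\mathcal{O}_X(G)\otimes\cdot)$, i.e.\ the injectivity theorem in degree $q=1$, not $q=0$. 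Injectivity in degree $0$ is indeed trivial but yields nothing about the connecting map. Consequently your claim that the boundedness $\sup_{X}|s|_{h_{L}}<\infty$ (equivalently a comparison of $h$ with $h_{D}$) is only needed for the ``moreover'' part is false: Theorem \ref{main-inj} must be invoked in degree $1$ already for the first conclusion, and its hypothesis on $s$ must be verified there too. The paper secures this by a normalization you omit: replace $h=ge^{-\varphi}$ by $ge^{-\max(\varphi,\varphi_{D})}$, which still has semi-positive curvature, satisfies $h\le h_{D}$ (hence $\sup_X|s_{aD}|_{h^{a}}<\infty$), and only enlarges $\I{h^{m-1}h_{B}}$, so the hypothesis on $u$ is preserved and the conclusion (which does not mention the multiplier ideal) is unaffected. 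This replacement is not available for the ``moreover'' statement, because there the multiplier ideal of the original $h$ appears in both source and target --- which is exactly why $h\le Ch_{D}$ is imposed as a hypothesis there. With the degree corrected and the normalization step added, your argument coincides with the paper's.
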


\begin{rem}
$(1)$ If $X$ is projective and $S$ is smooth   
$($namely $(X,\Delta)$ is plt$)$ 
and further if the divisor induced by a give 
$u \in H^{0}(S, \mathcal{O}_{S}(m(K_{X}+S+B)))$ 
is larger than 
the extension obstruction divisor 
$($which is zero if $K_{X}+\Delta$ is nef$)$, 
then $K_{X}+\Delta$ admits a $($singular$)$ metric $h$ with $\sup_{S} |u|_{h} < \infty$. 
$($see \cite[Corollary 1.8]{dhp}$)$. 
\vspace{0.1cm}\\
$(2)$ Even if $S$ has singularities,
we can construct a $($singular$)$ metric $h$ with $\sup_{S} |u|_{h} < \infty$  
in the special case where  
$u$ is zero along the singular locus of $S$,   
by the same method as in \cite{dhp}.

\end{rem}

\begin{proof}
We may add 
the additional assumption of  
$h \leq h_{D}$, 
where $h_{D}$ is the singular metric on $\mathcal{O}_{X}(K_{X}+\Delta)$ 
defined by the effective divisor $D$. 
Indeed, for a smooth metric $g$ on $\mathcal{O}_{X}(K_{X}+\Delta)$ and 
an $L^{1}$-function $\varphi$ (resp. $\varphi_{D}$)  
with $h= g\, e^{-\varphi} $ (resp. $h_{D}= g\, e^{-\varphi_{D}} $),  
the metric defined by $g\, e^{-\max( \varphi, \varphi_{D})}$
satisfies assumption $(3)$ again.

We prove only the first conclusion since  
the second conclusion follows from the same argument as the first conclusion. 
We put $G:=m(K_{X}+ \Delta)$, 
and consider the following exact sequence: 
\begin{equation*}
0 \rightarrow \mathcal{O}_{X}(G - S) \otimes I (h^{m-1}h_{B}) 
\rightarrow \mathcal{O}_{X}(G ) \otimes \I{h^{m-1}h_{B}}
\rightarrow \mathcal{O}_{S}(G ) \otimes \I{h^{m-1}h_{B}} 
\rightarrow0.
\end{equation*}
We prove 
the induced homomorphism
\begin{equation*}
H^{q}(X, \mathcal{O}_{X}(G - S) \otimes I (h^{m-1}h_{B})) 
\to 
H^{q}(X, \mathcal{O}_{X}(G) \otimes I (h^{m-1}h_{B})) 
\end{equation*}
is injective by the injectivity theorem. 
Then the conclusion follows from the induced long exact sequence.

By the assumption on the support of $D$, we can take 
an integer $a>0$ such that $aD$ is a 
Cartier divisor and $S \leq a D$. 
Then we have the following commutative diagram:

\[\xymatrix{
& \hspace{-3.5cm} H^{q}(X, \mathcal{O}_{X}(G) \otimes I (h^{m-1}h_{B})) \supseteq \mathrm{Im}\,(+S)   
 \ar[d]^{+ (aD -S)}   
& \\
H^{q}(X, \mathcal{O}_{X}(G - S) \otimes I (h^{m-1}h_{B}))  
\ar[ru]^{+ S}   
\ar[r]_{\hspace{-1.0cm}+ aD}  & 
H^{q}(X, \mathcal{O}_{X}(G - S + aD) \otimes I (h^{a+m-1}h_{B})), & 
}\]
where the map $+S:H^{q}(X, \mathcal{O}_{X}(G - S) \otimes I (h^{m-1}h_{B}))  \to H^{q}(X, \mathcal{O}_{X}(G ) \otimes I (h^{m-1}h_{B})).  $
Our goal is to show 
that the map to the upper right is injective. 
For this goal, we show that the horizontal map is injective 
as an application of Theorem \ref{main-inj}.

By the definition of $G$, 
we have 
\begin{equation*}
G -S =m(K_{X}+\Delta)-S=K_{X} + (m-1)(K_{X}+ \Delta) + B. 
\end{equation*}
Then the line bundle $F:= \mathcal{O}_{X}((m-1)(K_{X}+ \Delta) + B)$ 
equipped with the metric $h_{F}:= h^{m-1} h_{B}$
and the line bundle $L:=\mathcal{O}_{X}(aD)$ 
equipped with the metric $h_{L}:= h^{{a}}$ 
satisfy the assumptions in Theorem \ref{main-inj}. 
Indeed, we have $h_{F}=h_{L}^{(m-1)/a} h_{B}$
by the construction, and further 
the point-wise norm $|s_{aD}|_{h_{L}}$ is bounded on $X$ 
by the inequality $h \leq h_{D}$, 
where $s_{aD}$ is the natural section of $aD$.    
Therefore the horizontal map is injective 
by Theorem \ref{main-inj}. 
\if0 
\vspace{0.3cm}
\hspace{0.1cm}\\ \ 
{\bf{Step 2}}
\hspace{0.3cm}
Our goal of this step is to show that 
$u^{k}$ belongs to  
$H^{0}(S, \mathcal{O}_{S}(G ) \otimes I (h^{m-1}h_{B}))$. 
We take a local extension $f$ of $u^{k}$ 
to a neighborhood of a given point in $S$. 
The following lemma implies that 
$|f|_{h^{m-1}h_{B}}$ is locally $L^{2}$-integrable, 
and thus we know $u^{k} \in H^{0}(S, \mathcal{O}_{S}(G ) \otimes I (h^{m-1}h_{B}))$. 
Indeed, a local weight $\varphi$ (resp. $\psi$) of $h$ (resp. $h_{B}$) 
is a psh function, 
and further $e^{-\psi}$ is 
locally $L^{2}$-integrable
since $B$ is a simple normal crossing divisor with $B<1$. 
Therefore the assumptions in the lemma is satisfied.

\begin{lem}
Let $\varphi$ and $\psi$ be psh functions and $f$ 
be a holomorphic function on the open ball of radius one $B(0;1)$ in $\mathbb{C}^{n}$. 
If $e^{-\psi}$ is locally $L^{2}$-integrable, 
and the value of $|f|^{}e^{-\varphi}$ is 
finite at the origin,  
then  $|f| e^{-\varphi-\psi}$ is 
locally $L^{2}$-integrable 
$($that is, $f$ is a local section of $\I{\varphi+\psi}$$)$. 
\end{lem}
\begin{proof}

After we first show that $|f|^{p}e^{-p\varphi}$ is 
locally $L^{2}$-integrable for any integer $p>0$, 
we obtain the conclusion by H\"odler's inequality. 
If $\varphi$ has analytic singularities, 
it is obvious that 
the function $|f|^{}e^{-\varphi}$ is 
bounded on a neighborhood of the origin. 
However, it is not true 
when $\varphi$ has transcendental singularities. 
For example when $\varphi$ is a psh function defined by 
$\varphi(z):= \sum_{k=1}^{\infty} \e_{k} \log |z+1/k|$ 
where $\{\e_{k}\}_{k=1}^{\infty}$ is convergent series, 
$e^{-\varphi}$ is finite at the origin, 
but $e^{-\varphi}$ is not bounded on any neighborhood at the origin.
Note that $e^{-p\varphi}$ is locally $L^{2}$-integrable for any positive integer $p$ even in this case.

By using the approximation of $\varphi$ with 
psh functions with analytic singularities. 
we show that 
$|f|^{p}e^{-p\varphi}$ is 
locally $L^{2}$-integrable for any integer $p>0$. 
Let $\varphi_{\ell}$ be the psh function induced by 
the Bergman kernel the $L^{2}$-space of the holomorphic functions with 
the $L^{2}$-norm defined by 
\begin{equation*}
\| f \|^{2}:= \int _{B(0;1)} |f|^{2} e^{-2 \ell \varphi}.   
\end{equation*}
Namely,   
\begin{align*}
\varphi_{\ell}:=\frac{1}{2\ell} \log \sum_{k=1}^{\infty} |f_{k,\ell}|^{2},  
\end{align*}
where $\{f_{k,l}  \}_{k=1}^{\infty}$ is 
a complete orthonormal basis. 
By the Ohsawa-Takegoshi $L^{2}$-extension theorem we obtain 
$\varphi_{\ell} - C/\ell \leq \varphi_{\ell}$ for some constant $C$ 
(see \cite{Dem}[(13.2) Theorem]), 
and thus we know that the value 
$|f|e^{-\varphi_{\ell}}$ is finite at the origin. 
In particular, $|f|e^{-\varphi_{\ell}}$ is bounded on 
the open ball of a small radius  
since $\varphi_{\ell}$ has analytic singularities.

Then for integers $p, \ell>0$, 
straightforward computations implies 
\begin{align*}
& \int_{B(0;r)} |f|^{2p}e^{-2p\varphi} \\ 
=&\int_{\{\varphi \leq \frac{\ell}{\ell-p} \varphi_{\ell}\}\cap B(0;r)} |f|^{2p}e^{-2p\varphi}
+
\int_{\{\varphi > \frac{\ell}{\ell-p} \varphi_{\ell}\}\cap B(0;r)} |f|^{2p}e^{-2p\varphi} \\
 \leq & 
\sup  |f|^{2p} \int_{\{\varphi \leq \frac{\ell}{\ell-p} \varphi_{\ell}\}\cap B(0;r)}
e^{ -2\ell \varphi - 2(p-\ell)\varphi}
+ 
\int_{B(0;r)} |f|^{2p}e^{-2p \varphi_{\ell} 
-2\frac{p^{2}}{\ell-p}\varphi_{\ell}}. 
\end{align*}
First we see that the second term of the right hand side is finite. 
By the H\"older's inequality, 
for real numbers $x, y>1$ with $1/x+1/y=1$, 
we have 
\begin{align*}
\int_{B(0;r)} |f|^{2p}e^{-2p \varphi_{\ell} -2 
\frac{p^{2}}{\ell-p}\varphi_{\ell}}
\leq 
\Big( \int_{B(0;r)}  |f|^{2px}e^{-2px \varphi_{\ell}} \Big)^{1/x}
\cdot 
\Big( \int_{B(0;r)}  e^{-2 \frac{p^{2}y}{\ell-p}\varphi_{\ell}} \Big)^{1/y}
\end{align*}
The Lelong number 
\begin{equation*}
\nu(\frac{p^{2}y}{\ell-p}\varphi_{\ell}, 0) 
\leq \frac{p^{2}y}{\ell-p} \nu(\varphi, 0)
\end{equation*}
is smaller than one for a sufficiently large $\ell$. 
Further if we take a small $r$ (depending on $\ell$), 
the function 
$|f|^{2px}e^{-2px \varphi_{\ell}}$ is bounded on $B(0;r)$. 
Therefore the second term is finite for a sufficiently small $r$.

Now we consider the first term. 
By simple computations, we obtain  
\begin{align*}
\int_{\{\varphi \leq \frac{\ell}{\ell-p} \varphi_{\ell}\}\cap B(0;r)}
e^{ -2\ell \varphi - 2(p-\ell)\varphi}
&\leq \int_{\{\varphi \leq \frac{\ell}{\ell-p} 
\varphi_{\ell}\}\cap B(0;r)}
e^{ -2\ell \varphi + 2 \ell \varphi_{\ell}} \\
&\leq \int_{B(0;r)} (\sum_{k=1}^{\infty} |f_{k,\ell}|^{2})
e^{ -2\ell \varphi}.  
\end{align*}
By the techniques in \cite[Theorem 2.3]{dps01}, 
we can conclude that the right hand side is finite 
for a small $r$. 
We consider the family of the ideal sheaves 
$\mathcal{J}_{k_{0}}$ generated by 
$\{ f_{k,\ell}(z) \cdot {\overline{f_{k,\ell}({\overline{w}})}} \}_{k \leq {k_{0}}}$ on $B(0;r) \times B(0;r)$. 
By taking a sufficiently small $r$, 
we may assume that 
$\mathcal{J}_{k_{0}}$ is the maximal element of this family 
by the Noetherian property, 
and further we may assume that 
\begin{equation*}
\sum_{k=1}^{N}  f_{k,\ell}(z) 
\cdot {\overline{f_{k,\ell}({\overline{w}})}} 
\end{equation*}
is uniformly convergent on $B(0;r) \times B(0;r)$. 
The limit belongs to $\mathcal{J}_{k_{0}}$ since 
the space of the sections of $\mathcal{J}_{k_{0}}$ is closed 
under the topology of uniformly convergence 
(see \cite[Section D, Chapter I\hspace{-.1em}I]{gr65}). 
By the restriction to the conjugate diagonal, 
we obtain 
\begin{equation*}
\sum_{k=1}^{\infty}  |f_{k,\ell}(z)|^{2} 
\leq C \sum_{k=1}^{k_{0}}  |f_{k,\ell}(z)|^{2}. 
\end{equation*}
\end{proof}

Finally we obtain the conclusion by H\"odler's inequality. 
There exists a number $q>1$ such that $e^{-q\psi}$ 
is locally $L^{2}$-integrable by the (weak) openness conjecture 
which is first proved in \cite{Ber13} 
(see the strong openness conjecture \cite{GZ13}). 
By taking $p$ with $1/p +1/q=1$, 
we have 
\begin{align*}
\int_{B(0;r)} |f|^{2} e^{-2\varphi-2\psi} \leq 
\Big(\int_{B(0;r)} |f|^{2p} e^{-2p\varphi} \Big)^{1/p}\cdot  
\Big(\int_{B(0;r)} e^{-2q \psi} \Big)^{1/q}.  
\end{align*}
By the first half of the proof and the choice of $q$, 
this is finite if $r$ is sufficiently small. 
\fi
\end{proof}

To obtain some results related to the abundance conjecture
(Theorem \ref{main-thm-semiample} and Corollary \ref{sm semi positive}), 
we need the following corollary, 
which is a slight generalization of Theorem \ref{ext}.

\begin{cor}\label{ext-cor2}
Under the same situation as in Theorem \ref{ext}, 
instead of assumption $(3)$, 
we assume the following assumption: 
\begin{itemize}
\item[$(3')$] There exist effective $\mathbb{Q}$-divisors 
$E$ and $F$ and a $($singular$)$ metric $h$ on $\mathcal{O}_{X}(F)$ 
with semi-positive curvature such that 
\begin{itemize}
\item[$\bullet$] $K_{X}+\Delta \sim_{\mathbb{Q}} E+F$,  
\item[$\bullet$] $E+B$ is simple normal crossing and $E$ has no common component with $S$, 
\item[$\bullet$] $\nu(h, x) = 0$ at every point $x \in S$. 
\end{itemize}
\end{itemize}
Let $\widetilde{s}$ be a section on $X$ with ${\rm{div}}\, \widetilde{s} = mE$. 
Then for a section $u \in H^{0}(S, \mathcal{O}_{S}(mF))$, 
the section $u \cdot \widetilde{s} \in 
H^{0}(S, \mathcal{O}_{S}(m(K_{X}+\Delta)))$ can be extended to 
a section in $H^{0}(X, \mathcal{O}_{X}(m(K_{X}+\Delta)))$. 
\end{cor}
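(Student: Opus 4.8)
The plan is to deduce the corollary from Theorem \ref{ext}, after repackaging the data of $(3')$ into a suitable semi-positively curved metric on $K_{X}+\Delta$. First I would replace $m$ by a sufficiently divisible multiple, so that $mE$, $mF$ and $m(K_{X}+\Delta)$ are Cartier and $m(K_{X}+\Delta)\sim mE+mF$. Let $h_{mE}$ be the singular metric on $\mathcal{O}_{X}(mE)$ defined by the section $\widetilde{s}$ $($its local weight is $\tfrac{1}{2}\log|\widetilde{s}|^{2}$ up to a smooth term, and its curvature is the current $[mE]\ge 0)$, and let $h_{K}$ be the metric on $K_{X}+\Delta$ whose $m$-th power is $h^{\otimes m}\otimes h_{mE}$, regarded as a metric on $\mathcal{O}_{X}(m(K_{X}+\Delta))$ through the linear equivalence above; thus $h_{K}$ has local weight $\varphi_{h}+\tfrac{1}{2m}\log|\widetilde{s}|^{2}$ up to a smooth term, where $\varphi_{h}$ is a local weight of $h$. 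Since $h$ has semi-positive curvature and the curvature of $h_{mE}$ is $\ge 0$, the metric $h_{K}$ has semi-positive curvature, so it satisfies assumption $(3)$ of Theorem \ref{ext}, while assumptions $(1)$ and $(2)$ of that theorem coincide with those of the corollary. It therefore suffices to prove that $u\cdot\widetilde{s}\in H^{0}(S,\mathcal{O}_{S}(m(K_{X}+\Delta)))$ lies in the image of $H^{0}(S,\mathcal{O}_{S}(m(K_{X}+\Delta))\otimes\I{h_{K}^{m-1}h_{B}})$; Theorem \ref{ext} then yields the extension to $H^{0}(X,\mathcal{O}_{X}(m(K_{X}+\Delta)))$.

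To prove this membership I would work locally near $S$, the key assertion being the sheaf inclusion $\mathcal{O}_{X}(-mE)|_{U}\subseteq\I{h_{K}^{m-1}h_{B}}|_{U}$ on a suitable neighbourhood $U$ of $S$. A local weight of $h_{K}^{m-1}h_{B}$ equals, up to a bounded term, $(m-1)\varphi_{h}+\tfrac{m-1}{2m}\log|\widetilde{s}|^{2}+\varphi_{B}$, where $\varphi_{B}$ is a local weight of $h_{B}$. Because $\nu(h,x)=0$ for every $x\in S$ and the Lelong sublevel sets of $h$ are analytic, after shrinking $U$ the Lelong numbers of $(m-1)\varphi_{h}$ are as small as we please on $U$; since moreover $\lfloor\Delta\rfloor=S$ forces the coefficients of $B$ to be $<1$ and $B$ is simple normal crossing, Skoda's lemma and Hölder's inequality $($with exponents chosen using $\max_{i}b_{i}<1)$ give $e^{-2((m-1)\varphi_{h}+\varphi_{B})}\in L^{1}_{\mathrm{loc}}(U)$, i.e. $\I{(m-1)\varphi_{h}+\varphi_{B}}|_{U}=\mathcal{O}_{U}$. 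Then for a local section $f=\widetilde{s}\cdot g'$ of $\mathcal{O}_{X}(-mE)$ $($with $g'$ holomorphic$)$ one computes, up to a bounded factor, $|f|^{2}e^{-2((m-1)\varphi_{h}+\frac{m-1}{2m}\log|\widetilde{s}|^{2}+\varphi_{B})}=|g'|^{2}\,|\widetilde{s}|^{2/m}\,e^{-2((m-1)\varphi_{h}+\varphi_{B})}$, and since $|\widetilde{s}|^{2/m}$ is bounded this is locally integrable on $U$; this proves the claimed inclusion. Tensoring it with $\mathcal{O}_{X}(m(K_{X}+\Delta))$, the multiplication map $\mathcal{O}_{X}(mF)\xrightarrow{\,\cdot\widetilde{s}\,}\mathcal{O}_{X}(m(K_{X}+\Delta))$ has image $\mathcal{O}_{X}(m(K_{X}+\Delta)-mE)$, which over $U$ is contained in $\mathcal{O}_{X}(m(K_{X}+\Delta))\otimes\I{h_{K}^{m-1}h_{B}}$. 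Restricting to $S\subseteq U$ and evaluating at $u\in H^{0}(S,\mathcal{O}_{S}(mF))$ $($here the assumption that $E$ has no common component with $S$ guarantees that $\widetilde{s}|_{S}$ is a nonzero section, so that $u\cdot\widetilde{s}$ is the section we wish to extend$)$, we conclude that $u\cdot\widetilde{s}$ lifts to $H^{0}(S,\mathcal{O}_{S}(m(K_{X}+\Delta))\otimes\I{h_{K}^{m-1}h_{B}})$, as required.

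I expect the main obstacle to be precisely the local $L^{2}$-bookkeeping in the middle paragraph: one must spend exactly the vanishing of $\widetilde{s}$ along $mE$ to cancel the pole of the factor $|\widetilde{s}|^{-2(m-1)/m}$ coming from $h_{mE}$ $($leaving only the harmless bounded factor $|\widetilde{s}|^{2/m})$, and one must know that the transcendental part $(m-1)\varphi_{h}$ of the weight contributes nothing to the multiplier ideal throughout a neighbourhood of $S$. It is here that the hypothesis in $(3')$ that the Lelong number of $h$ vanishes at every point of $S$ — rather than $K_{X}+\Delta$ being merely nef — is indispensable, since it is what makes $e^{-\lambda\varphi_{h}}$ locally integrable near $S$ for arbitrarily large $\lambda$. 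Granting this, everything else is a formal consequence of Theorem \ref{ext}, and the injectivity theorem (Theorem \ref{main-inj}) enters only through its role in the proof of that theorem.
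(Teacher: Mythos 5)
Your proposal is correct, and its core (absorbing $\widetilde{s}$ into the metric via $h_{K}=h\cdot h_{mE}^{1/m}$, then the H\"older--Skoda computation showing $e^{-2((m-1)\varphi_{h}+\varphi_{B})}\in L^{1}_{\mathrm{loc}}$ near $S$ because $\nu(h,\cdot)=0$ on $S$ and the coefficients of $B$ are $<1$) is the same as the paper's. Where you genuinely diverge is in certifying that $u\cdot\widetilde{s}$ lies in the image of $H^{0}(S,\mathcal{O}_{S}(m(K_{X}+\Delta))\otimes\I{h_{K}^{m-1}h_{B}})$. The paper first shows $u\cdot\widetilde{s}$ belongs to the torsion-free restriction $\I{\cdot}\cdot\mathcal{O}_{S}$ and then, in a separate step, proves $\I{\cdot}\cap\mathcal{I}_{S}=\I{\cdot}\cdot\mathcal{I}_{S}$, so that $\mathcal{O}_{S}\otimes\I{\cdot}$ (which is what Theorem \ref{ext} actually requires) coincides with $\I{\cdot}\cdot\mathcal{O}_{S}$; it is precisely for this identification that the hypotheses that $E+B$ is simple normal crossing and shares no component with $S$ are spent (see the remark following the corollary in the paper). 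You bypass this entirely: the inclusion $\mathcal{O}_{X}(-mE)\subseteq\I{h_{K}^{m-1}h_{B}}$ on a neighbourhood $U\supseteq S$ makes multiplication by $\widetilde{s}$ factor through $\mathcal{O}_{X}(m(K_{X}+\Delta))\otimes\I{h_{K}^{m-1}h_{B}}$ over $U$, and tensoring that factorization with $\mathcal{O}_{S}$ sends $u$ functorially to a genuine element of $H^{0}(S,\mathcal{O}_{S}(m(K_{X}+\Delta))\otimes\I{h_{K}^{m-1}h_{B}})$ mapping onto $u\cdot\widetilde{s}$. This is a legitimate and cleaner route that makes the support condition on $E+B$ almost superfluous (it then serves mainly to guarantee $\widetilde{s}|_{S}\not\equiv 0$). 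Two small points to tidy: do not replace $m$ by a multiple, since $u$ is given as a section of $\mathcal{O}_{S}(mF)$ for the stated $m$ (the integrality and linear-equivalence conditions are implicit in the statement); and the inclusion $\mathcal{O}_{X}(-mE)\subseteq\I{h_{K}^{m-1}h_{B}}$ must hold as sheaves on an open neighbourhood of $S$ rather than merely at points of $S$, which follows from your stalkwise estimate either by your Siu sublevel-set argument or simply by coherence of both sheaves.
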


\begin{proof}
Let $h_{E}$ be the singular metric on $\mathcal{O}_{X}(E)$ 
induced by the section $\widetilde{s} \in H^{0}(X,\mathcal{O}_{X}(mE))$. 
By the definition, the metric $h_{E}$ satisfies 
$\sqrt{-1}\Theta_{h_E} \geq 0$ and $\sup |\widetilde{s}|_{h^{m}_{E}} < \infty$. 
The product $h \cdot  h_{E}$ determines the singular metric 
on $K_{X}+\Delta$ with semi-positive curvature. 
Therefore it is sufficient to show that 
$u \cdot \widetilde{s}$ belongs to 
$H^{0}(S, \mathcal{O}_{S}(m(K_{X}+\Delta))
\otimes  \mathcal{J})$, 
where we put $\mathcal{J}:= \I{ h^{m-1}h^{m-1}_{E}h_{B} }$ for simplicity.   

In the first step, 
we see that 
\begin{equation*}
\mathcal{J}_{x} = \I{h^{m-1}_{E}h_{B}}_{x} 
\end{equation*}
for every $x \in S$, 
where $\mathcal{F}_{x}$ denotes the stalk of a sheaf $\mathcal{F}$ at $x$. 
Let $f$ be a holomorphic function 
on an open neighborhood $U_{x}$ of $x \in S$ with $ f \in \I{h^{m-1}_{E}h_{B}}_{x} $, 
and let $\varphi$ (resp. $\varphi_{E}$, $\varphi_{B}$) be 
a local weight of $h$ (resp. $h_{E}$, $h_{B}$). 
By taking a real number $p>1$ 
with $\I{h^{p(m-1)}_{E}h^{p}_{B}}=\I{h^{m-1}_{E}h_{B}}$, 
we may assume that $|f| e^{ -p(m-1)\varphi_{E} - p\varphi_{B}}$ 
is $L^{2}$-integrable on $U_{x}$. 
Then,  
by taking the positive number $q$ with $1/p + 1/q = 1$, 
we obtain 
\begin{align*}
\int_{U_{x}} |f|^{2} 
e^{-2(m-1)\varphi -2(m-1)\varphi_{E} - 2\varphi_{B}} 
&\leq 
\Big(\int_{U_{x}} |f|^{2p} 
e^{ -2p(m-1)\varphi_{E} - 2p\varphi_{B}} \Big)^{1/p}\cdot  
\Big(\int_{U_{x}} e^{-2q (m-1)\varphi} \Big)^{1/q}\\
\end{align*}
by H\"older's inequality. 
The function $e^{-2q (m-1)\varphi}$ is locally $L^{2}$-integrable 
for any $q>0$ 
by Skoda's lemma and the assumption of the Lelong number.
On the other hand, as mentioned above,  
$|f|^{p} e^{ -p(m-1)\varphi_{E} - p\varphi_{B}}$ is also locally $L^{2}$-integrable. 
Therefore we have $\mathcal{J}_{x} = \I{h^{m-1}_{E}h_{B}}_{x} $
for every $x \in S$.

In the second step, 
we prove 
\begin{equation*}
u\cdot  \widetilde{s} \in H^{0}(S, \mathcal{O}_{S}(m(K_{X}+\Delta))
\otimes \mathcal{J}|_{S}), 
\end{equation*}
where $\mathcal{J}|_{S}$ is the restriction of $\mathcal{J}$ defined by   
\begin{equation*}
\mathcal{J}|_{S} := 
\mathcal{J}\cdot \mathcal{O}_{S}= 
\mathcal{J}/(\mathcal{J}\cap \mathcal{I}_{S}).
\end{equation*}
Let $\widetilde{u}$ be a local extension of $u$ 
on an open neighborhood $U_{x}$ of $x \in S$. 
By the klt condition of $B$, 
we can take a real number $p>1$ 
with $\I{h^{p}_{B}}=\mathcal{O}_{X}$. 
Then for the holomorphic function $g:=\widetilde{u}\cdot  \widetilde{s}$, 
by taking the positive number $q$ with $1/p + 1/q = 1$, 
we obtain
\begin{align*}
\int_{U_{x}} |g|^{2} 
e^{-2(m-1)\varphi_{E} - 2\varphi_{B}} 
&\leq 
\Big(\int_{U_{x}}  |g|^{2p} e^{ -2p(m-1)\varphi_{E}- 2p\varphi_{B}} \Big)^{1/p}\cdot   
\Big(\int_{U_{x}} 1 \Big)^{1/q} \\
&\leq 
\sup_{U_{x}} |g|^{2p} e^{ -2p(m-1)\varphi_{E}}
\Big(\int_{U_{x}}  e^{- 2p\varphi_{B}} \Big)^{1/p}\cdot   
\Big(\int_{U_{x}} 1 \Big)^{1/q}
\end{align*}
by H\"older's inequality again.
The point-wise norm 
$|g|^{2p} e^{ -2p(m-1)\varphi_{E}}$ is bounded by the choice of $h_{E}$, 
and 
$e^{-2q (m-1)\varphi}$ is locally $L^{2}$-integrable 
for any $q>0$ 
by the assumption on the Lelong number.
It implies that 
$u\cdot  \widetilde{s} $ (locally) belongs to $\I{h^{m-1}_{E}h_{B}}|_{S}=
\mathcal{}\mathcal{J}|_{S}$.

Finally we show  
\begin{align*}
u \cdot \widetilde{s} &\in H^{0}(S, \mathcal{O}_{S}(m(K_{X}+\Delta))\otimes \mathcal{J} ). 
\end{align*}
By simple computations  
we have $\mathcal{O}_{S}\otimes \mathcal{J}= 
\mathcal{O}_{X}\otimes \mathcal{J}/(\mathcal{J} \cdot \mathcal{I}_{S})$, 
and thus, by the second step, it is sufficient to see  
$$\mathcal{J}\cap \mathcal{I}_{S} 
= \mathcal{J} \cdot \mathcal{I}_{S}.$$
Here $\mathcal{I}_{S}$ denotes the ideal sheaf defined by $S$. 
By the first step and the assumption on the support of $E+B$. 
we have  
\begin{equation*}
\mathcal{J}_{x} = \I{h^{m-1}_{E}h_{B}}_{x} =\mathcal{O}_{X}(-\lfloor(m-1)E+B \rfloor )
\end{equation*}
for every $x \in S$. 
Since the divisors 
$S$ and $E+B$ has no common component by the assumption on $E$,  
we can easily see  
$\mathcal{J}\cap \mathcal{I}_{S} 
= \mathcal{J} \cdot \mathcal{I}_{S}.$
Therefore $u \cdot \widetilde{s}$ actually belongs to 
$H^{0}(S, \mathcal{O}_{S}(m(K_{X}+\Delta))
\otimes \mathcal{J})$.
The conclusion follows from Theorem \ref{ext}. 
\end{proof}

\begin{rem}
When we apply the injectivity theorem in order to extend sections, 
we need to handle $\mathcal{O}_{S} \otimes \I{\varphi }$ $($not $\I{\varphi }|_{S}$$)$. 
When we apply the Ohsawa-Takegoshi extension theorem, 
we usually use the restriction of multiplier ideal sheaves $\I{\varphi }|_{S} $.
It is relatively difficult to handle $\mathcal{O}_{S} \otimes \I{\varphi }$. 
However the support condition of $E$ $($the second assumption of the above corollary$)$ 
fortunately appears in the proof of the applications related to the abundance conjecture, 
which asserts $\mathcal{O}_{S} \otimes \I{\varphi }= \I{\varphi }|_{S}$. 
\end{rem}

In a special case of the above corollary when $E=\mathcal{O}_X$ and $\widetilde{s}=1 \in H^0(X, \mathcal{O}_X)$, we obtain the following:

\begin{cor}\label{ext-cor}
Under the same situation as in Theorem \ref{ext}, 
instead of assumption $(3)$, 
we assume the following assumption: 
\begin{itemize}
\item[$(3'')$] $K_{X}+\Delta$ admits a $($singular$)$ 
metric $h$ such that $\sqrt{-1}\Theta_{h} \geq 0$ and 
$\nu(h, x) = 0$ at every point $x \in S$. 
\end{itemize}
Then for an integer $m \geq 2$ 
with Cartier divisor $m(K_{X}+\Delta)$,  
a section $u \in 
H^{0}(S, \mathcal{O}_{S}(m(K_{X}+\Delta)) )$ 
can be extended to a section in 
$H^{0}(X, \mathcal{O}_{X}(m(K_{X}+\Delta)) )$. 
\end{cor}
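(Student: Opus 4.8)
The plan is to deduce Corollary \ref{ext-cor} directly from Corollary \ref{ext-cor2} by checking that assumption $(3'')$ supplies the data required in assumption $(3')$ with the trivial choice of auxiliary divisor. First I would set $F := K_X + \Delta$ (as a $\mathbb{Q}$-Cartier divisor class, or rather work with the line bundle $\mathcal{O}_X(m(K_X+\Delta))$ scaled appropriately) and $E := 0$, so that $K_X + \Delta \sim_{\mathbb{Q}} E + F$ holds trivially. The metric $h$ in $(3'')$ is exactly a singular metric on $\mathcal{O}_X(F)$ with semi-positive curvature, and the hypothesis $\nu(h,x) = 0$ for every $x \in S$ is literally the third bullet of $(3')$. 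Since $E = 0$, the divisor $E + B = B$ is simple normal crossing by assumption $(1)$ of Theorem \ref{ext}, and $E = 0$ has no common component with $S$ vacuously; thus the second bullet of $(3')$ is satisfied.

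Next I would take $\widetilde{s} = 1 \in H^0(X, \mathcal{O}_X) = H^0(X, \mathcal{O}_X(mE))$, which indeed satisfies $\mathrm{div}\,\widetilde{s} = 0 = mE$. Then for a given section $u \in H^0(S, \mathcal{O}_S(m(K_X+\Delta)))$, we have $u \cdot \widetilde{s} = u$ under the identification $\mathcal{O}_S(mF) = \mathcal{O}_S(m(K_X+\Delta))$. Applying Corollary \ref{ext-cor2} to this choice of $E$, $F$, $h$, and $\widetilde{s}$ yields that $u = u \cdot \widetilde{s}$ extends to a section in $H^0(X, \mathcal{O}_X(m(K_X+\Delta)))$, which is precisely the conclusion of Corollary \ref{ext-cor}.

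The argument is essentially a matter of unwinding definitions, so there is no serious obstacle; the only point requiring a line of care is the bookkeeping of line-bundle versus $\mathbb{Q}$-divisor notation, namely checking that when $E = 0$ the various multiplier ideals appearing in the proof of Corollary \ref{ext-cor2} (in particular $\I{h^{m-1}h^{m-1}_E h_B} = \I{h^{m-1}h_B}$) reduce to those in the statement of Theorem \ref{ext}, and that the identity $\mathcal{J} \cap \mathcal{I}_S = \mathcal{J}\cdot \mathcal{I}_S$ used there holds trivially since $\mathcal{J}_x = \mathcal{O}_X(-\lfloor B \rfloor)$ near $S$ and $B$ shares no component with $S$ (as $\lfloor S + B \rfloor = S$ forces $\lfloor B \rfloor$ to be supported away from $S$ in codimension one). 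Hence the hypotheses of Corollary \ref{ext-cor2} are met and the corollary follows.
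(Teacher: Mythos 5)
Your proposal is correct and is exactly the paper's argument: the paper itself derives Corollary \ref{ext-cor} as the special case of Corollary \ref{ext-cor2} with $E=0$ and $\widetilde{s}=1\in H^{0}(X,\mathcal{O}_{X})$, and your verification of the hypotheses of $(3')$ and of the multiplier-ideal bookkeeping (using $\lfloor B\rfloor=0$ near $S$) is the correct unwinding of that reduction.
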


For further applications of the above theorems, 
we prepare the following lemma.

\begin{lem}\label{pull-min}
Let $\varphi$ be a $($quasi$)$-psh function 
on a complex manifold $X$.  
If the Lelong number $\nu(\varphi, x_{0})$ is zero at $x_{0} \in X$, 
then for any modification 
$\pi: Y \to X$, the Lelong number 
$\nu(\pi^{*} \varphi, y)$ is zero at every point $y \in \pi^{-1}(x_{0})$. 
\end{lem}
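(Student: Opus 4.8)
The statement is local on $X$, so I may assume $X$ is a coordinate ball around $x_0$ and $\pi\colon Y\to X$ is a modification that is an isomorphism over $X\setminus E$ for a proper analytic subset $E$. The plan is to use the well-known semicontinuity/comparison property of Lelong numbers under modifications, namely that pulling back a quasi-psh function cannot create a Lelong number out of nothing: if $\nu(\varphi,x_0)=0$ then $\varphi$ has \emph{minimal singularities in the analytic sense at $x_0$} to the extent that $e^{-\lambda\varphi}$ is locally $L^2$-integrable near $x_0$ for every $\lambda>0$ (this is exactly Skoda's lemma, Theorem \ref{Skoda}, applied after rescaling). First I would fix a point $y\in\pi^{-1}(x_0)$ and a constant $\lambda>0$; I want to show $\nu(\pi^*\varphi,y)<\tfrac{1}{\lambda}\cdot(\text{something})$, and since $\lambda$ is arbitrary this forces $\nu(\pi^*\varphi,y)=0$.

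The cleanest route is through integrability rather than through the $\liminf$ definition directly. Since $\nu(\varphi,x_0)=0$, Skoda's lemma gives $\mathcal{I}(\lambda\varphi)_{x_0}=\mathcal{O}_{X,x_0}$ for every $\lambda>0$, i.e. $e^{-2\lambda\varphi}$ is integrable on a neighborhood $U$ of $x_0$. Now pull back: on $\pi^{-1}(U)$ one has $\int_{\pi^{-1}(U)}e^{-2\lambda\pi^*\varphi}\,\pi^*(dV)\cdot|\mathrm{Jac}_\pi|^2 = \int_U e^{-2\lambda\varphi}\,dV<\infty$ by the change of variables formula (valid because $\pi$ is a biholomorphism off a measure-zero set and $\pi^*(dV)|\mathrm{Jac}_\pi|^2$ is the correct pulled-back volume form). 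Writing $K_{Y/X}=\sum b_i E_i$ with $b_i\ge 0$ (or at worst bounded below) for the relative canonical divisor, $|\mathrm{Jac}_\pi|^2$ vanishes to order $b_i$ along $E_i$, so $e^{-2\lambda\pi^*\varphi}$ times a function with at worst a pole of fixed order along the exceptional locus is integrable near $y$. Hence $e^{-2\lambda\pi^*\varphi}$ itself is integrable near $y$ up to multiplication by such a bounded-order factor; absorbing that factor costs only a fixed shift, so $\mathcal{I}(\lambda'\pi^*\varphi)_y=\mathcal{O}_{Y,y}$ for all $\lambda'>0$ after adjusting constants. By the contrapositive of the second bullet of Skoda's lemma (Theorem \ref{Skoda}), $\nu(\lambda'\pi^*\varphi,y)<1$ for all $\lambda'$, i.e. $\nu(\pi^*\varphi,y)<1/\lambda'$ for all $\lambda'>0$, whence $\nu(\pi^*\varphi,y)=0$.

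Alternatively — and perhaps more elementary — I can argue directly from the definition using the fact that the Lelong number is a \emph{birational invariant in a comparison sense}: for any modification $\pi$ and any $y\in\pi^{-1}(x_0)$ there is a constant $c>0$ (depending only on $\pi$ and a neighborhood, essentially $\sup|\pi(w)-x_0|$ compared with $|w-y|$) such that $\nu(\pi^*\varphi,y)\le c\,\nu(\varphi,x_0)$; this follows because $\log|\pi(w)-x_0|\le c\log|w-y|+O(1)$ near $y$ (the components of $\pi$ vanish at $y$ to some finite order, since $\pi$ is proper with finite fibers over a neighborhood), so dividing the inequality $\varphi(\pi(w))\ge \nu(\varphi,x_0)\log|\pi(w)-x_0|+O(1)$ by $\log|w-y|$ and taking $\liminf$ as $w\to y$ gives the claim. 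If $\nu(\varphi,x_0)=0$ this immediately yields $\nu(\pi^*\varphi,y)=0$.

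\textbf{Main obstacle.} Both approaches hinge on one genuinely nontrivial point: controlling how fast $\pi$ contracts near $y$, i.e. establishing the estimate $\log|\pi(w)-x_0|\le c\log|w-y|+O(1)$ (equivalently, that the ideal $\pi^*\mathfrak{M}_{X,x_0}\cdot\mathcal{O}_{Y,y}$ contains a power of $\mathfrak{M}_{Y,y}$). This is a standard consequence of properness of $\pi$ together with the analytic Nullstellensatz, but it is the only step requiring care; everything else is the change-of-variables formula and a mechanical application of Skoda's lemma. I expect to cite or quickly reproduce this vanishing-order estimate and then let the rest follow formally.
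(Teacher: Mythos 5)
Your first (integrability) route is essentially the paper's proof run forwards instead of by contradiction: the paper assumes $\nu(\pi^*\varphi,y_0)>0$, uses Skoda's lemma to make $e^{-2m\pi^*\varphi}\,\pi^*dV_X$ non-integrable near $y_0$ for large $m$, and contradicts the finiteness of $\int_U e^{-2m\varphi}\,dV_X$ (Skoda again, since $\nu(\varphi,x_0)=0$) via the change-of-variables formula. One step of your write-up needs repair, though. From $\int_{\pi^{-1}(U)}e^{-2\lambda\pi^*\varphi}|J|^2\,dV_Y<\infty$ you \emph{cannot} conclude that $\mathcal{I}(\lambda'\pi^*\varphi)_y=\mathcal{O}_{Y,y}$: the Jacobian $J$ \emph{vanishes} along the exceptional locus, so you have established integrability of $e^{-2\lambda\pi^*\varphi}$ against a weight that is \emph{smaller} than $1$, which is weaker, not stronger. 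What you actually get is $J\in\mathcal{I}(\lambda\pi^*\varphi)_y$. The correct way to "absorb the factor" is then the second bullet of Skoda's lemma: since $J\not\equiv 0$ (as $\pi$ is a modification), $J$ vanishes to some finite order $b$ at $y$, i.e.\ $J\notin\mathfrak{M}_{Y,y}^{b+1}$; if $\nu(\lambda\pi^*\varphi,y)\ge n+b$ we would have $\mathcal{I}(\lambda\pi^*\varphi)_y\subseteq\mathfrak{M}_{Y,y}^{b+1}$, a contradiction, so $\nu(\pi^*\varphi,y)<(n+b)/\lambda$ for every $\lambda$, which gives the claim. (Your "$<1$" should in any case be "$<n$".) With that fix the argument is complete and coincides with the paper's.

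Your "more elementary" alternative contains a genuine error and should be dropped. The inequality $\varphi(\pi(w))\ge\nu(\varphi,x_0)\log|\pi(w)-x_0|+O(1)$ is false: a Lelong number bounds a psh function from \emph{above} (one has $\varphi(z)\le\nu(\varphi,x_0)\log|z-x_0|+O(1)$), never from below, and with $\nu(\varphi,x_0)=0$ your inequality would assert that $\varphi$ is bounded below near $x_0$, which fails already for $\varphi(z)=-\sqrt{-\log|z|}$ or for $\varphi=\sum_k\e_k\log|z-1/k|$ with $\sum_k\e_k<\infty$ (both have zero Lelong number at the origin but are unbounded below on every neighborhood). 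The vanishing-order estimate for the components of $\pi$ that you single out as the "main obstacle" is indeed needed in some form, but it enters through the order of vanishing of the Jacobian in the integrability argument, not through a pointwise lower bound on $\varphi$.
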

\begin{proof}
For a contradiction, we assume that 
$\nu(\pi^{*} \varphi, y_{0})>0$ 
for some point $y_{0} \in \pi^{-1}(x_{0})$.
By Skoda's lemma (see Theorem \ref{Skoda}), 
we can take a sufficiently large number $m>0$ such that 
$\pi^{*}dV_{X} e^{-2m \pi^{*}\varphi}$ is 
not integrable on a neighborhood of $y_{0}$, 
where $dV_{X}$ is a standard volume form on a neighborhood 
$U$ of $x_{0}$. 
By the change of variable formula,  
we have  
\begin{align*}
\int_{U} e^{-2m \varphi}dV_{X} = 
\int_{\pi^{-1}(U)}  e^{-2m \pi^{*}\varphi} \pi^{*}dV_{X}. 
\end{align*}
By the assumption of $\nu(\varphi, x_{0})=0$, 
the left hand side if finite for a sufficiently small $U$. 
It is a contradiction to the choice of $m$. 
Therefore $\nu(\pi^{*} \varphi, y)=0$ at every point $y \in \pi^{-1}(x_{0})$.

\end{proof}

\section{Proof of Corollaries related to the abundance conjecture}
\label{applications}

In this section, 
we prove some applications related to the abundance conjecture. The proof of the following theorem is based on \cite[Section 8]{dhp} and \cite[Theorem 5.9]{fg3}. 
We use the different MMP from them to preserve metric conditions. 

\begin{thm}\label{main-thm-semiample}Assume that Conjecture \ref{abun conj} holds in dimension $n-1$.
Let $X$ be an $n$-dimensional normal projective variety and $\Delta$ be an $\mathbb{Q}$-divisor 
with the following assumptions: 
\begin{itemize}
\item $(X,\Delta)$ is a klt pair such that there exists an effective $\mathbb{Q}$-divisor $D$ such that $K_X+\Delta \sim_{\mathbb{Q}}D$.
 
\item  There exists a projective birational morphism $\varphi:Y \to X$ such that $Y$ is smooth and $\varphi^*(\mathcal{O}_X(m(K_X+\Delta)))$ admits 
a $($singular$)$ metric whose curvature 
is semi-positive and Lelong number is identically zero on $\mathrm{Supp}\,\varphi^*D$.
Here $m$ is a positive integer such that $m(K_X+\Delta)$ is Cartier.  
\end{itemize}
Then $K_X+\Delta$ is semi-ample.

\end{thm}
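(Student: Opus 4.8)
The plan is to follow the strategy of \cite[Section 8]{dhp} and \cite[Theorem 5.9]{fg3}, but to choose the minimal model program carefully so that the semi-positivity of the curvature and the vanishing of the Lelong numbers survive all the birational modifications, and to conclude by feeding the $(n-1)$-dimensional instance of Conjecture \ref{abun conj} into the extension result Corollary \ref{ext-cor2} rather than into an Ohsawa--Takegoshi type theorem. I would begin by noting that the hypotheses already force $K_X+\Delta$ to be nef: for a curve $C\subseteq X$ with $C\not\subseteq\Supp D$ we have $(K_X+\Delta)\cdot C=D\cdot C\ge 0$ since $K_X+\Delta\sim_{\mathbb Q}D\ge 0$, while for $C\subseteq\Supp D$ we pick a curve $C'\subseteq Y$ mapping finitely onto $C$, note that $C'\subseteq\Supp\varphi^{*}D$, use that the pulled-back metric has identically vanishing Lelong number there so that its curvature current restricts to $C'$ as a positive measure, and deduce $(K_X+\Delta)\cdot C=\frac{1}{m}\,\varphi^{*}(m(K_X+\Delta))\cdot C'\ge 0$. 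If $D=0$, then $K_X+\Delta\sim_{\mathbb Q}0$ is semi-ample and we are done; so from now on assume $D\ne 0$.

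Next comes the reduction to a convenient log smooth pair. Choosing the log canonical threshold $t_0>0$ with $(X,\Delta+t_0D)$ log canonical but not klt, and observing $K_X+\Delta+t_0D\sim_{\mathbb Q}(1+t_0)(K_X+\Delta)$, it suffices to prove that $K_X+\Delta+t_0D$ is semi-ample. Taking a $\mathbb{Q}$-factorial dlt modification and then a log resolution, and using the standard descent of semi-ampleness through such crepant/birational modifications, I may assume: $X$ is a smooth projective variety; $\Delta=S+B$ is a simple normal crossing divisor with $0\le\Delta\le 1$ and $\varnothing\ne\lfloor\Delta\rfloor=S$ a union of components of $\Supp D$; $K_X+\Delta$ is nef; and $K_X+\Delta\sim_{\mathbb Q}E+F$ with $E\ge 0$ having no common component with $S$, $F\ge 0$, and --- by Lemma \ref{pull-min} --- a metric $h$ on $\mathcal{O}_X(mF)$ with semi-positive curvature and vanishing Lelong number at every point of $S$. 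This is precisely the setting of assumption $(3')$ of Corollary \ref{ext-cor2}. The point is that the minimal model program (and the modifications) must be run with care so as not to destroy this metric datum; this, together with the base-locus bookkeeping in the last step, is the main obstacle.

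Finally, the inductive step. By adjunction $(K_X+\Delta)|_S=K_S+B_S$ with $(S,B_S)$ a dlt pair of dimension $n-1$ and $K_S+B_S$ nef; passing to its components and a dlt model, the $(n-1)$-dimensional case of Conjecture \ref{abun conj} (in its usual consequence for dlt, hence slc, pairs) gives that $K_S+B_S$ is semi-ample, so $\mathcal{O}_S(m(K_X+\Delta))$ is globally generated for all sufficiently divisible $m$. Letting $\widetilde s\in H^0(X,\mathcal{O}_X(mE))$ be the tautological section, Corollary \ref{ext-cor2} extends $u\cdot\widetilde s$ to $H^0(X,\mathcal{O}_X(m(K_X+\Delta)))$ for every $u\in H^0(S,\mathcal{O}_S(mF))$; since these sections generate $\mathcal{O}_S(m(K_X+\Delta))$, we obtain $S\cap\Bs|m(K_X+\Delta)|=\varnothing$. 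As $m(K_X+\Delta)\sim mD$ for divisible $m$ we also have $\Bs|m(K_X+\Delta)|\subseteq\Supp D$; performing the reduction of the previous paragraph once for each component of $\Supp D$ (choosing the threshold so that the given component, together with the non-klt locus it produces, enters $\lfloor\Delta\rfloor$) and applying Noetherian induction on the base locus, we conclude $\Bs|m(K_X+\Delta)|\cap\Supp D=\varnothing$, hence $\Bs|m(K_X+\Delta)|=\varnothing$ for some $m>0$. Thus $K_X+\Delta$ is nef and base point free, i.e.\ semi-ample, and unwinding the reductions gives the semi-ampleness of the original $K_X+\Delta$.
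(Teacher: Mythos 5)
Your proposal assembles most of the right local ingredients --- the lct trick producing a non-klt divisor $S$ inside $\Supp D$, an MMP chosen to preserve the metric data, Corollary \ref{ext-cor2} together with Lemma \ref{pull-min}, and abundance in dimension $n-1$ applied to the (s)lc pair on $S$ --- but it is missing the step that actually closes the paper's argument: the reduction to $\kappa(K_X+\Delta)=0$. The paper first invokes Kawamata's theorem on the Iitaka fibration (using that Conjecture \ref{abun conj} in dimension $n-1$ gives good minimal models in dimension $n-1$) to reduce to $\kappa=0$, and then argues by contradiction: if $D\neq 0$, the restriction map $H^0(Y',m'(K_{Y'}+C'+S'))\to H^0(S',m'(K_{Y'}+C'+S'))$ is surjective by the extension theorem, yet it is the zero map, because $\kappa=0$ forces every section of $m'(1+l)\varphi'^*D$ to be a multiple of the tautological one, which vanishes on $S'\subseteq\Supp\varphi'^*D$; since the target is nonzero by slc abundance in dimension $n-1$, this is absurd, so $D=0$ and $K_X+\Delta\sim_{\mathbb Q}0$.

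Your substitute for this reduction --- a direct base-point-freeness argument run ``once for each component of $\Supp D$'' with ``Noetherian induction on the base locus'' --- has concrete gaps. First, the sections you can extend are exactly those of the form $u\cdot\widetilde s$ with $u\in H^0(S,\mathcal O_S(mF))$, and all of these vanish along $E$; hence they generate $\mathcal O_S(m(K_X+\Delta))$ only away from $S\cap\Supp E$, and the assertion $S\cap\Bs|m(K_X+\Delta)|=\varnothing$ does not follow unless you additionally show that $E$ is exceptional over the model on which the base locus is being measured and pass through the twist by the tautological section of $m'E$ (this is precisely the bookkeeping in the paper's proof of Claim \ref{claim-ex}, via the isomorphism $s\mapsto\beta^*s\otimes u_{m'E}$). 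Second, the log canonical threshold does not allow you to choose which components of $\Supp D$ acquire coefficient one: only the components computing the lct enter $\lfloor\Delta+t_0D\rfloor$, so there is no mechanism for treating an arbitrary prescribed component, and after one pass both the pair and the divisor $D$ have changed, so all hypotheses (nefness, the decomposition $E+F$, the Lelong-number condition along the new $S$) would have to be re-established before iterating. Without the reduction to $\kappa=0$, or some other device that controls $\Bs|m(K_X+\Delta)|$ globally, the final step of your argument does not go through.
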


\begin{proof}Note that Conjecture \ref{abun conj}  in dimension $n-1$ implies that the existence of good minimal model for $(n-1)$-dimensional klt pairs by \cite[Theorem 4.3]{gl} or \cite[Remark 2.6]{dhp}. First we may assume that $\kappa(K_X+\Delta)=0$ by Kawamata's theorem \cite[Theorem 7.3]{kawamata_pluri} (see also \cite[5.6 Lemma]{kemamc}).
There exists the effective $\mathbb{Q}$-divisor $D$ such  that
$$D \sim_{\mathbb{Q}}K_X+\Delta$$
by the assumption. For a contradiction, assume  $D \not =0$.
We may assume that $\varphi$ is a log resolution of $(X,\Delta+D)$.  We write 
$$K_Y+B=\varphi^*(K_X+\Delta)\sim_{\mathbb{Q}} \varphi^*D.
$$
Let $l= \mathrm{lct(\varphi^*D; Y, B)}$. Set effective $\mathbb{Q}$-divisors $C$, $S$, and  $G$ such that $C+S=(B+l\varphi^*D)^{\geq0}$, $\lfloor C+S\rfloor=S$, and $G=(B+l\varphi^*D)^{\leq0}$. Then there are no common components of any two divisors of $C$, $S$, and $G$, and we have   
$$K_Y+C+S\sim_{\mathbb{Q}} (1+l)\varphi^*D+G,
$$

$$S+C-G=B+l\varphi^*D.$$
 Note that $S\not =0$ and $G$ is $\varphi$-exceptional.
Then we see that $S \subseteq \mathrm{Supp}\,\varphi^*D$. Let us consider the MMP for $(Y,C+S)$ over $X$ by \cite{bchm} and \cite[Theorem 2.3]{fujino-ss}. Then this program contracts only the divisor $G$. Let $f:Y \dashrightarrow Y'$ be a minimal model of this program and $\varphi':Y' \to X$ the induced morphism. Then $S$ is not contracted by $f$ since $S$ and $G$ have no common component. Denote $C'$ and $S' (\not =0)$ by the strict transforms on $Y$ of $C$ and $S$. Then it follows that $$K_{Y'}+C'+S' \sim_{\mathbb{Q}} (1+l) \varphi'^*D.
$$
Thus $K_{Y'}+C'+S' $ is nef. 

\begin{cl}\label{claim-ex}For a sufficiently large and divisible  $m'  \in \mathbb{Z}$,
the restriction map 
$$H^0(Y', m'(K_{Y'}+C'+S' )) \to H^0(S', m'(K_{Y'}+C'+S' )) 
$$
 is surjective

\end{cl}
\begin{proof}[Proof of Claim \ref{claim-ex}]\label{Proof of claim-ex}
 Let $u$ be a non-zero section in $H^0(S', m'(K_{Y'}+C'+S' ))$ and let

\begin{equation*}
\xymatrix{ & W\ar[dl]_{\alpha} \ar[dr]^{\beta}\\
 Y\ar@{-->}[rr]_{f}  & & Y'}
\end{equation*}
be common log resolutions such that $\beta$ is isomorphic over the generic point of every lc center of  $(Y,' C'+S')$. Then we write 
$$K_{W}+S_W+F\sim_{\mathbb{Q}}(1+l)\beta^*\varphi'^*D+E,
$$
where $S_W$ is the strict transform of $S'$, and $E$ and $F$ are effective $\mathbb{Q}$-divisor such that $\lfloor F\rfloor=0$ and any two divisors of  $S_W$, $F$ and $E$ have no common component. Note that $m'F$ and $m'E$ are Cartier since $m'$ is sufficiently large and divisible. Let $u_{m'E}$ and $h_{m'E}$ be  the global section  and the metric associated to $m'E$ respectively. Then by the assumption we have a semi-positive singular metric $h$ on $m' (1+l)\beta^*\varphi'^*D$ induced by the pullback (we use the same symbol since it contains no confusion). 



By applying Corollary \ref{ext-cor2} for $F=(1+l)\beta^*\varphi'^*D$  (cf. Lemma\,\ref{pull-min}) and  $\widetilde{s}=u_{m'E}$,  we have a section $U \in H^0(m'(K_W+S_W+F))$ such that $U_{|_{S_W}}=\beta^*u \otimes (u_{m'E})|_{S_W} $. Thus we see that
$$H^0(Y', m'(K_{Y'}+C'+S' )) \to H^0(S', m'(K_{Y'}+C'+S' )) 
$$
 is surjective. Indeed we see that
 $$H^0(W, m'(K_W+S_W+F)) \to H^0(Y', m'(K_{Y'}+C'+S'))
 $$
  is isomorphic by mapping $s \mapsto \beta^*s \otimes u_{m'E}$ for a section $s \in H^0(W, m'(K_W+S_W+F)),$
  and $$H^0(S', m'(K_{Y'}+C'+S')) \to H^0(S_W, m'(K_{S_W}+F|_{S_W}))
 $$
 is injection by mapping  $t \mapsto \beta^*t \otimes (u_{m'E})|_{S_W}$ for a section $t \in H^0(S', m'(K_{Y'}+C'+S'))$ by $\beta_*\mathcal{O}_{S_W}=\mathcal{O}_{S'}$ from Koll\'ar--Shokurov's connectedness Theorem  \cite[17.4 Theorem]{KolFlip}.
We finish the proof of Claim \ref{claim-ex}.
   
\end{proof}
On the other hand, this restriction map is zero map since 
$$\kappa(K_X+\Delta)=\kappa(K_{Y'}+C'+S')=0$$
 and $S' \subseteq \mathrm{Supp}\,\varphi'^*D$.  The pair $(S', C'_{S'})$ is a projective semi-log canonical pair such that $K_{S'}+C'_{S'}$ is nef, where $(K_{Y'}+C'+S')|_{S'}=(K_{S'}+C'_{S'})$. This implies $K_{S'}+C'_{S'}$ is semi-ample, because of  the  abundance conjecture (Conjecture \ref{abun conj}) holds in dimension $n-1$ and \cite[Theorem 1.5]{fg3} or \cite{hx} (here we need the assumption of projectivity). This is a contradiction to Claim \ref{claim-ex}. Thus $D=0$. We finish the proof.
\end{proof}





By combining the abundance theorem in dimension $3$ (\cite[Theorem 1.1]{k1}, \cite[1.1 Theorem]{kemamc}, \cite[Theorem 0.1]{fujino-abundance}), we obtain the following results:

\begin{cor}\label{sm semi positive}Let $(X,\Delta)$ be a $4$-dimensional projective klt pair.  
Assume that there exists a projective birational morphism $\varphi:Y \to X$ on to each components such that $Y$ is smooth and $\varphi^*(\mathcal{O}_X(m(K_X+\Delta)))$ admits 
a singular metric whose curvature is semi-positive and Lelong number is identically zero 
(in particular it is satisfied if $h$ is smooth). 
Here $m$ is an integer with Cartier divisor $m(K_X+\Delta)$. 
If $\kappa(K_X+\Delta) \geq 0$, then $K_X+\Delta$ is semi-ample.

\end{cor}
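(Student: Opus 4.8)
The plan is to derive this corollary directly from Theorem \ref{main-thm-semiample} specialized to $n=4$. The hypotheses on $(X,\Delta)$ and on the resolution $\varphi\colon Y\to X$ are already in the form required there, so the only genuine input that must be supplied is that the generalized abundance conjecture (Conjecture \ref{abun conj}) holds in dimension $3$. First I would record this: given a $3$-dimensional klt pair $(V,\Theta)$, run the $(K_V+\Theta)$-MMP (it exists and terminates by \cite{bchm}). If the output is a Mori fibre space then $K_V+\Theta$ is not pseudo-effective and $\kappa(V,K_V+\Theta)=\kappa_{\sigma}(V,K_V+\Theta)=-\infty$; otherwise one reaches a minimal model on which $K+\Theta$ is nef, hence semi-ample by the abundance theorem for klt threefolds (\cite[Theorem 1.1]{k1}, \cite[1.1 Theorem]{kemamc}, \cite[Theorem 0.1]{fujino-abundance}), so that there $\kappa$ equals the numerical dimension, which in turn equals $\kappa_{\sigma}$ (see \cite{nakayama-zariski-abun}). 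Since $\kappa$ and $\kappa_{\sigma}$ are preserved under the steps of the $(K+\Theta)$-MMP, the equality $\kappa=\kappa_{\sigma}$ descends to $(V,\Theta)$, which is exactly Conjecture \ref{abun conj} in dimension $3$.

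Next I would check that the two bullet hypotheses of Theorem \ref{main-thm-semiample} are met. Since $\kappa(K_X+\Delta)\ge 0$, some positive multiple of $m(K_X+\Delta)$ carries a nonzero section, so there is an effective $\mathbb{Q}$-divisor $D$ with $K_X+\Delta\sim_{\mathbb{Q}}D$; this gives the first bullet. The singular metric on $\varphi^{*}(\mathcal{O}_X(m(K_X+\Delta)))$ provided by the assumption has semi-positive curvature and Lelong number identically zero, in particular identically zero on $\mathrm{Supp}\,\varphi^{*}D$, which is the second bullet, with the same $\varphi$ and $m$. Applying Theorem \ref{main-thm-semiample} with $n=4$ then yields that $K_X+\Delta$ is semi-ample. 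The parenthetical claim is immediate, since a smooth metric $h$ with semi-positive curvature has Lelong number $0$ everywhere.

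I expect the routine parts — producing $D$ from $\kappa\ge 0$ and matching the metric hypotheses — to be entirely straightforward, and the only substantive point to be the reduction of Conjecture \ref{abun conj} in dimension $3$ to the classical threefold abundance theorem, i.e. the MMP-invariance of $\kappa$ and $\kappa_{\sigma}$ together with abundance forcing $\kappa=\kappa_{\sigma}$ on a minimal model. All of these ingredients are well documented, so there is no real obstacle: the content of the corollary is essentially that of Theorem \ref{main-thm-semiample} in the first interesting dimension, $n=4$, and the lower-dimensional cases are already subsumed by the threefold abundance theorem itself.
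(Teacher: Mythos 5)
Your proposal is correct and follows exactly the route the paper intends: the corollary is Theorem \ref{main-thm-semiample} with $n=4$, where the hypothesis that Conjecture \ref{abun conj} holds in dimension $3$ is supplied by the threefold (log) abundance theorem together with the MMP-invariance of $\kappa$ and $\kappa_{\sigma}$, and the remaining hypotheses (existence of $D$ from $\kappa\ge 0$, vanishing of the Lelong number on $\mathrm{Supp}\,\varphi^{*}D$) are immediate as you say. The paper gives no further detail than this, so nothing is missing.
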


\begin{rem}\label{rem-smooth}When $h$ is smooth,  we can show Corollary \ref{sm semi positive} without using our injectivity theorem. By replacing Theorem  \ref{intro-main-inj} to the generalized Enoki's injectivity theorem after Fujino \cite[Theorem 1.2 and Corollary1.3]{Fuj12-A} in the proof of Theorems \ref{ext}, \ref{main-thm-semiample},  and Corollary \ref{ext-cor2}, we can obtain it.

\end{rem}

Finally we give a result for semi-ampleness 
by combining with Verbitsky's non-vanishing theorem (\cite[Theorem 4.1]{ver}).  

\begin{cor}\label{cor-hk}Let $X$ be a $4$-dimensional projective hyperK\"ahler manifold  
and $L$ be a line bundle admitting a $($singular$)$ 
metric whose curvature is semi-positive and Lelong number is identically zero 
$($in particular it is satisfied if $h$ is smooth$)$. 
Then $L$ is semi-ample.

\end{cor}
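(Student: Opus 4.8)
The plan is to deduce Corollary \ref{cor-hk} from Corollary \ref{sm semi positive} together with Verbitsky's non-vanishing theorem. Since $X$ is a projective hyperK\"ahler manifold, it has trivial canonical bundle, so applying Corollary \ref{sm semi positive} with $\Delta = 0$ is not immediately possible because $L$ need not be of the form $K_X + \Delta$. Instead, the strategy is to realize $L$ as an adjoint-type bundle after passing to a small perturbation, or more directly, to use the Beauville--Bogomolov quadratic form $q$ on $H^2(X,\mathbb{R})$ to control the geometry of $L$.

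First I would observe that $c_1(L)$ is a nef class, since $L$ admits a singular metric with semi-positive curvature current and identically zero Lelong number; in particular $c_1(L)$ lies in the closure of the K\"ahler cone. By Verbitsky's non-vanishing theorem (\cite[Theorem 4.1]{ver}), applied under the hypothesis that $L$ carries such a metric, one concludes $\kappa(X, L) \geq 0$, i.e. some multiple of $L$ is effective; write $D \sim m L$ with $D \geq 0$. Next I would reduce to the case $q(L) = 0$: if $q(L) > 0$ then $L$ is big and nef, hence semi-ample by base-point-free type arguments (a nef and big line bundle on a projective manifold with a metric of zero Lelong number is semi-ample via Kawamata--Shokurov or Nakayama), so the interesting case is $q(L)=0$, where $L$ is on the boundary of the positive cone.

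The key step is then to run an argument parallel to Theorem \ref{main-thm-semiample}/Corollary \ref{sm semi positive}: using that $X$ has a klt (indeed canonical, trivial-canonical) structure, one wants to write $L$ in an adjoint form. Concretely, since $K_X \sim 0$, for a suitable effective $\mathbb{Q}$-divisor $\Delta$ with $(X,\Delta)$ klt and $\mathbb{Q}$-Cartier one can arrange $K_X + \Delta \sim_{\mathbb{Q}} \epsilon L$ (or a rescaling), or alternatively pass to a log resolution and a boundary supported on $\mathrm{Supp}\,D$; then $\kappa(K_X+\Delta) = \kappa(\epsilon L) \geq 0$ and the pullback of $\mathcal{O}_X(m(K_X+\Delta))$ inherits the semi-positive metric with zero Lelong number from that of $L$ (pullbacks preserve the zero Lelong number property by Lemma \ref{pull-min}). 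Corollary \ref{sm semi positive} — valid since $\dim X = 4$ and abundance holds in dimension $3$ — then yields that $K_X+\Delta$, hence $L$ up to $\mathbb{Q}$-linear equivalence and positive multiples, is semi-ample.

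The main obstacle I anticipate is the reduction to the adjoint setting: going from an arbitrary line bundle $L$ on a hyperK\"ahler fourfold to a pair $(X,\Delta)$ with $K_X+\Delta \sim_{\mathbb{Q}} cL$ for some $c>0$ while keeping $(X,\Delta)$ klt and keeping the metric hypotheses intact. This requires that $L$ (or a small multiple) be linearly equivalent to an effective divisor whose support can be made into an snc boundary after resolution — this is exactly where Verbitsky's non-vanishing enters, supplying $D \sim mL$ effective — and then checking that the induced boundary $\Delta$ is klt, which uses that one may take $\Delta = \delta \cdot \frac{1}{m}D$ for small rational $\delta > 0$ so that the pair has klt singularities, and that $\varphi^*\mathcal{O}_X(m'(K_X+\Delta))$ still admits a semi-positive singular metric with identically zero Lelong number on $\mathrm{Supp}\,\varphi^*(\delta\frac1m D)$, which follows by combining the pullback of the given metric on $L$ with the metric induced by the effective divisor and invoking Lemma \ref{pull-min}. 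Once this bookkeeping is in place, the conclusion is immediate from Corollary \ref{sm semi positive}.
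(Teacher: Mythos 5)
Your argument is correct and follows essentially the same route as the paper: establish $\kappa(L)\geq 0$ by splitting on the sign of the Beauville--Bogomolov form (bigness when $q(L,L)>0$, Verbitsky's non-vanishing when $q(L,L)=0$), then pass to the klt pair $(X,\varepsilon D)$ with $K_X+\varepsilon D\sim_{\mathbb{Q}}\varepsilon L$ (using $K_X\sim 0$) and invoke Corollary \ref{sm semi positive}. The only cosmetic difference is that in the $q(L,L)>0$ case you conclude semi-ampleness directly by base-point-freeness, whereas the paper simply records $\kappa(L)\geq 0$ and feeds both cases uniformly into Corollary \ref{sm semi positive}; both are fine.
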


\begin{proof} It is enough to show $\kappa(L) \geq 0$ by Corollary \ref{sm semi positive} since if there exists an effective $\mathbb{Q}$-divisor such that $D \sim_{\mathbb{Q}}L$,  the pair $(X, \varepsilon D)$ is  klt and $K_X+\varepsilon D \sim_{\mathbb{Q}}\varepsilon L$ for sufficiently small $\varepsilon>0$.  If $q(L,L)>0$, then $L$ is big, where $q(\cdot, \cdot)$ is the Bogomolov--Beauville--Fujiki form. In the case of $q(L,L)=L^{\mathrm{dim}\,X}=0$, then $\kappa(L) \geq 0$ follows from  \cite[Theorem 4.1]{ver}.   

\end{proof}


\begin{thebibliography}{elmnp}
\bibitem[A1]{ambro-qlog}
F.~Ambro, 
{\textit{Quasi-log varieties}}, 
Tr. Mat. Inst. Steklova {\textbf{240}} (2003), 
Biratsion. Geom. Linein. Sist. Konechno Porozhdennye Algebry, 220--239; translation in 
Proc. Steklov Inst. Math. 2003, no. 1 (240), 214--233 

\bibitem[A2]{ambro} 
F.~Ambro, 
{\textit{An injectivity theorem}}, 
to appear in Compos. Math. 

\bibitem[AC]{ac} 
E.~Amerik and F.~Campana, 
{\textit{Characteristic foliation on non-uniruled smooth divisors 
on projective hyperkaehler manifolds}}, 
Preprint, arXiv:1405.0539.




\bibitem[BCHM]{bchm}
C. Birkar, P. Cascini, C. D. Hacon, and J. $\mathrm{M^{c}}$Kernan, 
{\textit{Existence of minimal models for varieties of log general type}}, 
J. Amer. Math. Soc. {\bf{23}} (2010), 405--468.

\bibitem[COP]{cop}
F.~Campana, K.~Oguiso, and T.~Peternell, 
{\textit{Non-algebraic Hyperkaehler manifolds}}, 
J. Differential Geom. {\bf 85}, Number 3 (2010), 397--424.

\bibitem[Dem]{Dem}
J.-P. Demailly,   
{\textit {Analytic methods in algebraic geometry}}, 
Lecture Notes on the web page of the author.


\bibitem[Dem-book]{Dem-book}
J.-P. Demailly,   
{\textit {Complex analytic and differential geometry}}, 
Lecture Notes on the web page of the author.


\bibitem[DHP]{dhp}
J.-P.~Demailly, C.~D.~Hacon, and M.~P\u{a}un, 
{\textit{Extension theorems, non-vanishing and the existence of good minimal models}}, 
Acta Math. {\bf 210} (2013) 203--259.



\bibitem[DPS]{dps01}
J.-P. Demailly, T. Peternell, and M. Schneider,   
{\textit{Pseudo-effective line bundles on compact K\"ahler manifolds}}, 
International Journal of Math. {\bf{6}} (2001), 689--741. 




\bibitem[E]{Eno90}
I. Enoki. 
{\textit{Kawamata-Viehweg vanishing theorem for compact 
K\"ahler manifolds}}, 
Einstein metrics and Yang-Mills connections (Sanda, 1990), 59--68. 


\bibitem[EV-book]{ev} 
H.~Esnault and E.~Viehweg, 
{\textit{Lectures on vanishing theorems}},  
DMV Seminar, {\textbf{20}}, Birkh\"auser Verlag, Basel, (1992). 

\bibitem[F1]{fujino-abundance}
O.~Fujino, 
{\textit{Abundance theorem for semi log canonical threefolds}}, 
Duke Math. J. {\textbf{102}} (2000), no. 3, 513--532. 





\bibitem[F2]{fujino-book1} 
O.~Fujino, 
{\textit{Introduction to the log minimal model program for log canonical pairs}}, 
preprint (2008). 



\bibitem[F3]{fujino-ss} 
O.~Fujino, 
{\textit{Semi-stable minimal model program for varieties with trivial 
canonical divisor}}, 
Proc. Japan 
Acad. Ser. A Math. Sci. {\textbf{87}} (2011), no. 3, 25--30. 

\bibitem[F4]{F-fund}
O.~Fujino, 
{\textit{Fundamental theorems for the log minimal model program}}, 
Publ. Res. Inst. Math. Sci. {\textbf{47}} (2011), no. 3, 727--789.




\bibitem[F5]{Fuj12-A}
O. Fujino, 
{\textit{A transcendental approach to Koll\'ar's injectivity theorem}}, 
Osaka J. Math. {\bf{49}} (2012), no. 3, 833--852.



\bibitem[F-book]{fujino-book2} 
O.~Fujino, 
{\textit{Foundation of the  minimal model program}}, 
preprint (2014). 



\bibitem[FG]{fg3}
O. Fujino and Y. Gongyo, 
{\textit{Log pluricanonical representations and the abundance conjecture}}, 
Compositio Math.  {\bf{150}} (2014), 593--620

\bibitem[GL]{gl}
Y.~Gongyo and B.~Lehmann, 
{\textit{Reduction maps and minimal model theory}},  
Compositio Math. {\bf 149}, No.2, 295--308. 



\bibitem[HX]{hx}
C.~Hacon and  C.~Xu, 
{\textit{On Finiteness of B-representations and Semi-log Canonical Abundance}}, 
Preprint, arXiv:1107.4149.

\bibitem[Ka]{kawamata_pluri} 
Y.~Kawamata, 
{\textit{Pluricanonical systems on minimal algebraic varieties}}, 
Invent. Math. {\textbf{79}} (1985), no. 3, 567--588.


\bibitem[Ka2]{k1}
Y.~Kawamata, 
{\textit{Abundance theorem for minimal threefolds}},   
Invent. Math.  {\textbf{108}}  (1992),  no. 2, 229--246.

\bibitem[KaMM]{kamama}
Y. Kawamata, K. Matsuda, and K. Matsuki, 
\textit{Introduction to the minimal model problem}, 
Algebraic geometry, Sendai, 1985,  283--360, Adv. Stud. Pure Math., 10, North-Holland, Amsterdam, 1987. 

\bibitem[KeMMc]{kemamc}
S. Keel, K. Matsuki, and J. $\mathrm{M^{c}}$Kernan, 
{\textit{Log abundance theorem for threefolds}},  
Duke Math. J.  {\textbf{75}}  (1994),  no. 1, 99--119, 
Corrections to: ``{\textit{Log abundance theorem for threefolds}}'',  Duke Math. J.  {\textbf{122}}  (2004),  no. 3, 625--630. 


\bibitem[Ko1]{Kol86}
J. Koll\'ar, 
\textit{Higher direct images of dualizing sheaves. I}, 
Ann. of Math. (2) {\bf{123}} (1986), no. 1, 11--42.


\bibitem[Ko2]{KolFlip}
J.~Koll\'ar, Adjunction and discrepancies, {\em{Flips and Abundance for algebraic threefolds}}, 
Ast\'erisque {\textbf{211}} (1992), 183--192. 

\bibitem[KoM]{komo}
J. Koll\'ar and S. Mori, 
{\textit{Birational geometry of algebraic varieties}}, 
Cambridge Tracts in Math.,134 (1998).




\bibitem[Mat1]{Mat-Nad}
S. Matsumura, 
{\textit{A Nadel vanishing theorem via injectivity theorems}},  
to appear in Math. Ann.  



\bibitem[Mat2]{Mat-inj}
S. Matsumura, 
{\textit{An injectivity theorem with multiplier ideal sheaves of singular metrics with transcendental singularities}},  
Preprint, arXiv:1308.2033v2. 

\bibitem[N]{nakayama-zariski-abun}
N. Nakayama, 
\textit{Zariski decomposition and abundance}, 
MSJ Memoirs, {\bf{14}}. Mathematical Society of Japan, Tokyo, 2004. 


\bibitem[O]{ohsawa} 
T.~Ohsawa, 
{\textit{
On a curvature condition that implies a cohomology injectivity 
theorem of Koll\'ar--Skoda type}}, 
Publ. Res. Inst. Math. Sci. {\textbf{41}} (2005), no. 3, 565--577.



\bibitem[T]{tan} 
S.~G.~Tankeev, 
{\textit{On $n$-dimensional canonically polarized varieties 
and varieties of fundamental type}}, 
Math. USSR-Izv. vol. {\textbf{5}} (1971), no. 1, 29--43.  


\bibitem[V]{ver}
M. Verbitsky,  
{\textit{HyperK$\mathrm{\ddot{a}}$hler SYZ conjecture and semipositive line bundles}}, 
Geom. Funct. Anal.  {\bf{19}} (2010), no. 5, 1481--1493.


\end{thebibliography}
\end{document}